\newcommand{\R}{\mathbb{R}}
\newcommand{\N}{\mathbb{N}}
\newcommand{\Q}{\mathbb{Q}}
\DeclareMathOperator*{\osc}{osc}
\DeclareMathOperator*{\diam}{diam}
\DeclareMathOperator*{\spt}{spt}
\DeclareMathOperator*{\dist}{dist} 
\DeclareMathOperator*{\Lip}{Lip}
\DeclareMathOperator{\Mod}{Mod}
\DeclareMathOperator{\capa}{Cap}
\DeclareMathOperator*{\supp}{supp}
\DeclareMathOperator*{\Arg}{Arg}
\def\vint_#1{\mathchoice%
          {\mathop{\kern 0.2em\vrule width 0.6em height 0.69678ex depth -0.58065ex
                  \kern -0.8em \intop}\nolimits_{\kern -0.4em#1}}%
          {\mathop{\kern 0.1em\vrule width 0.5em height 0.69678ex depth -0.60387ex
                  \kern -0.6em \intop}\nolimits_{#1}}%
          {\mathop{\kern 0.1em\vrule width 0.5em height 0.69678ex depth -0.60387ex
                  \kern -0.6em \intop}\nolimits_{#1}}%
          {\mathop{\kern 0.1em\vrule width 0.5em height 0.69678ex depth -0.60387ex
                  \kern -0.6em \intop}\nolimits_{#1}}}
\newcommand{\p}{{$p\mspace{1mu}$}}
\newcommand{\Om}{\Omega}
\newcommand{\loc}{\mathrm{loc}}
\newcommand{\eps}{\varepsilon}
\newcommand{\BV}{\mathrm{BV}}
\newcommand{\liploc}{\mathrm{Lip}_{\mathrm{loc}}}
\newcommand{\ch}{\text{\raise 1.3pt \hbox{$\chi$}\kern-0.2pt}}
\theoremstyle{plain}
\newtheorem{theorem}[equation]{Theorem}
\newtheorem{lemma}[equation]{Lemma}
\newtheorem{corollary}[equation]{Corollary}
\newtheorem{proposition}[equation]{Proposition}
\numberwithin{equation}{section}
\theoremstyle{definition}
\newtheorem{definition}[equation]{Definition}
\newtheorem{example}[equation]{Example}
\theoremstyle{remark}
\newtheorem{remark}[equation]{Remark}
\begin{document}

\title[Dirichlet problem for least gradient]{Notions of Dirichlet problem\\
for functions of least gradient\\
in metric measure spaces}

\author[Korte, Lahti, Li, Shanmugalingam]{Riikka Korte, Panu Lahti, Xining Li, Nageswari Shanmugalingam}

\date{\today}

\keywords{Function of bounded variation, inner trace, perimeter, least gradient, $p$-harmonic, Dirichlet problem, metric measure space,
Poincar\'e inequality, codimension $1$ Hausdorff measure.}

\subjclass[2010]{Primary: 31E05; Secondary: 30L99, 49Q05, 26A45.}

\maketitle

\begin{abstract}
We study two notions of Dirichlet problem associated with BV energy minimizers
(also called functions of least
gradient) in bounded domains in metric measure spaces whose measure is doubling and supports a 
$(1,1)$-Poincar\'e inequality. Since one of the two notions is not amenable to the direct method of the calculus of
variations, we construct, based on an approach of~\cite{Juu, MRL}, solutions
by considering the Dirichlet problem for $p$-harmonic functions, $p>1$, and letting $p\to 1$. Tools developed and
used in this paper include the \emph{inner} perimeter measure of a domain. 
\end{abstract}

\tableofcontents

\newpage

\section{Introduction}

Existence, uniqueness, continuity, and stability of solutions to the Dirichlet problem for $p$-harmonic
functions in metric measure space
setting is now reasonably well understood when $1<p<\infty$. The corresponding problem for $p=1$, that is, finding a BV
function of least gradient in the given domain, with prescribed trace on the boundary, is not well understood.  Part
of the problem is that without additional curvature restrictions for the boundary of the given domain, solutions to
the Dirichlet problem, where the \emph{trace} of the BV function is prescribed, are known to not always exist. Thus
alternate notions of Dirichlet problem for the least gradient functions need to be explored. Based on the notion of
Dirichlet problem set forth in~\cite{Giu84}, in~\cite{HKL} a notion of Dirichlet problem was proposed 
(\cite{HKL} considers the area functional, but the results are easily applicable to the total variation functional). 
It was shown in~\cite{HKL} that for a wide class of domains in metric measure spaces equipped with a doubling
measure supporting a $(1,1)$-Poincar\'e inequality, solutions always exist if the boundary data are themselves given by
a BV function. The notion  proposed there required extension of the BV solution to the exterior of the domain of the problem.

In this paper we discuss an alternate notion of the Dirichlet problem for least gradient functions that does not require
extension of the BV solution to the complement of the domain of interest. The boundary data is given by a fixed Lipschitz function.
However, unlike in~\cite{HKL}, the 
direct method of the calculus of variations does not yield existence of solutions for this notion of the Dirichlet problem.
Thus an alternate method of verifying existence needs to be adopted. In~\cite[Theorem~3.1]{Juu} it was
shown, using the tools of viscosity solutions, that the limit of a sequence of $p$-harmonic functions in a Euclidean
domain, as $p\to 1$, must be a function of least gradient. In the recent paper~\cite{MRL} it was shown that such a
limit function, again in the Euclidean setting, satisfies the notion of Dirichlet problem considered in this paper.
The key tool used in~\cite{MRL} is the divergence theorem. In our setting of metric measure spaces we do not have
access to the divergence theorem nor notions of viscosity solutions. We instead employ a careful study of 
\emph{inner trace} of BV functions for a class of domains.  

We start by showing that if there is a sequence $u_{p_k}$ of $p_k$-harmonic functions with $(p_k)_k$ a monotone
decreasing sequence of real numbers larger than $1$ such that $\lim_kp_k=1$, and $u_{p_k}$ converges 
to $u$ in $L^1$, then the limit function $u$ is a function of least gradient, see Theorem~\ref{thm:main1}.
In the  case of $p$-energy with $p>1$, there is no ambiguity in the sense in which we want to fix the boundary values of the 
function, if the boundary values are themselves restrictions of Sobolev functions. Note that Lipschitz functions are
a priori in the Sobolev class $N^{1,p}$ for each $1\le p<\infty$.
However, when $p=1$ and the solutions are merely functions of bounded variation, it is not 
clear what notion of the Dirichlet problem is the correct one. 

In this paper, we propose two ways of defining solutions to the Dirichlet problem: the first one,
described in Definition~\ref{def:dirichlet}(B), is based on 
minimizing the BV-energy in the closure of the domain. In the second one, 
given in Definition~\ref{def:dirichlet}(T), extension of solutions to the complement of the domain is not required, but the energy  being minimized 
includes the integral of the jump in the inner trace of the BV function (in comparison with the boundary data) 
measured with respect to the interior perimeter of the domain.

The drawback of the first  approach 
is that the structure of the underlying space close to the boundary but outside the domain also affects the minimization 
problem. This phenomenon occurs already in weighted Euclidean spaces; see the discussion following 
Definition~\ref{def:dirichlet}. On the other hand, the advantage of the first approach is that 
the energy being minimized is lower semicontinuous with respect to $L^1$-convergence, and hence
existence of solutions can be proven using the direct method of the calculus of variations. 
In the Euclidean setting, Dirichlet problems 
related to minimizing convex functionals with linear growth have been studied in~\cite{BS}, and
the notion of Dirichlet problem considered there is also equivalent to the notion given by Definition~\ref{def:dirichlet}(B) here.
The second approach given in Definition~\ref{def:dirichlet}(T) avoids 
the impact of the part of the complement of the domain that is near the boundary of the domain, but the drawback is
that proving the existence of solutions using the direct method of the calculus of variations is not possible. In the setting
of metric measure spaces considered here, we do not even have the tools of divergence or Green's theorem, and hence 
our proof is more involved. 

One benefit of the proof we provide here is that the results hold even in a wider class of Euclidean domains; the standard
theory from~\cite{MRL} only consider smooth domains, while~\cite{BS} considers Euclidean Lipschitz domains.

%



The structure of this paper is as follows. In Section~2 we explain the notation and definitions of concepts used
in this paper. In Section~3 we show that functions that arise as $L^1$-limits of $p$-harmonic functions are
functions of least gradient, see Theorem~\ref{thm:main1}. The focus of the fourth section is to describe the two
notions of solution to the Dirichlet problem, see Definition~\ref{def:dirichlet}, while the fifth section gives a way
of finding good Lipschitz approximations of BV functions via discrete convolutions. Such discrete convolutions are used
in Section~6 to compare the inner perimeter measure $P_+(\Om,\cdot)$ of the bounded domain $\Om$ with its
perimeter measure $P(\Om, \cdot)$, see Theorem~\ref{thm:P+vsP}. 

In Section~7, we show that the least gradient functions, obtained as $L^1$-limits of $p$-harmonic functions that are solutions to the Dirichlet
problem with the fixed Lipschitz boundary data, are necessarily solutions to the Dirichlet problem
defined in Definition~\ref{def:dirichlet}(T) with the same Lipschitz boundary data. This result is Theorem~\ref{thm:dirichletT}. For this result,  we need some additional assumptions on $\Om$. More precisely, we need to assume that $\Om$ is of finite perimeter and that at $\mathcal{H}$-a.e.~boundary
point of $\Om$ the complement of $\Om$ has positive density.


The focus of Section~8 is to show that in addition to perturbing the BV energy to the $L^p$-energy (via
$p$-harmonic functions), if we also perturb the domain by approximating the domain from outside, then
the corresponding $p$-harmonic solutions have a subsequence that converges to a solution to the Dirichlet problem
as given in Definition~\ref{def:dirichlet}(B). While the problem~(T) is associated with approximating the domain
from inside, the results of Section~8 show that the problem~(B) is associated with approximating the domain
from outside; see Theorem~\ref{thm:ProblemB}. It should be noted that the restrictions placed on the domain in
relation to problem~(T) as in Section~7 are not needed in Section~8.
Finally, in Section~9 we consider alternate notions of
functions of least gradient, and show that all these notions coincide. For the convenience of the reader,
in the appendix we provide a proof of the fact that the inner perimeter measure $P_+(\Om,\cdot)$ as considered in 
Definition~\ref{def:innerPerim} is indeed a Radon measure.

\medskip

{\bf Acknowledgements.} The research of N.S. is partially supported by the grant \#~DMS--1500440 of NSF (U.S.A.). 
P.L. was supported by a grant from the Finnish Cultural Foundation.
Part of the research was conducted during the visit of N.S. to Aalto University, and during the visit of X.L.  to 
University of Cincinnati.  Some parts of the research was conducted during the time spent by X.L. as a postdoctoral 
scholar at Aalto University. The authors wish to thank these institutions for their kind hospitality. The authors also thank
Juha Kinnunen for making them aware of the reference~\cite{MRL} and for fruitful discussions on the topic.

\section{Preliminaries}

Throughout this paper we assume that $(X,d,\mu)$ is a complete metric space equipped
with a Borel regular outer measure $\mu$ that satisfies a doubling property and supports a $(1,1)$-Poincar\'e inequality (see definitions below). 
We assume that $X$ consists of at least 2 points. The doubling property means that there exists a constant $C_d\ge 1$ such that
\[
0<\mu(B(x,2r))\leq C_d\mu(B(x,r))<\infty
\]
for every ball $B(x,r)\subset X$.  Given a ball $B=B(x,r)$ and $\tau>0$, we denote by $\tau B$  the ball $B(x,\tau r)$. 
In a metric space, a ball does not necessarily have a unique center 
and radius, but whenever we use the above abbreviation we will consider balls whose center and radii have been 
pre-specified.

In general, $C\ge 1$ will denote a generic constant whose particular value is not important for the purposes of this
paper, and might differ between
each occurrence. When we want to specify that a constant $C$
depends on the parameters $a,b, \ldots,$ we write $C=C(a,b,\ldots)$. Unless otherwise specified, all constants only 
depend on the doubling constant $C_d$ and the constants $C_P,\lambda$ associated
with the Poincar\'e inequality defined below.

A complete metric space with a doubling measure is proper,
that is, closed and bounded sets are compact. Since $X$ is proper, for any open set $\Omega\subset X$
we define $\liploc(\Omega)$ to be the space of
functions that are Lipschitz in every $\Omega'\Subset\Omega$.
Here $\Omega'\Subset\Omega$ means that $\Omega'$ is open and that $\overline{\Omega'}$ is a
compact subset of $\Omega$.
We define other local spaces similarly.

For any set $A\subset X$, and $0<R<\infty$, the restricted spherical Hausdorff content of codimension $1$ is defined by
\[
\mathcal H_{R}(A)=\inf\left\{ \sum_{i=1}^\infty\frac{\mu(B(x_{i},r_{i}))}{r_{i}} :\, A\subset\bigcup_{i=1}^\infty B(x_{i},r_{i}), \,r_{i}\leq R  \right\}.
\]
The codimension $1$ Hausdorff measure of a set $A\subset X$ is 
\[
\mathcal H(A)=\lim_{R\rightarrow 0}\mathcal H_{R}(A).
\]
The codimension $1$ Minkowski content of a set $A\subset X$ is defined for any positive
Radon measure $\nu$ by
\begin{equation}\label{eq:definition of Minkowski content}
\nu^{+}(A):=\liminf_{R\to 0}\frac{\nu\left(\bigcup_{x\in A}B(x,R)\right)}{2R}.
\end{equation}

\begin{definition}\label{def:int-ext-bdy-meas}
The measure theoretic boundary $\partial^{*}E$ of a set $E\subset X$ is the
set of
all points $x\in X$
at which both $E$ and its complement have positive upper density, i.e.
\[
\limsup_{r\to 0^+}\frac{\mu(B(x,r)\cap E)}{\mu(B(x,r))}>0\quad\;
  \textrm{and}\quad\;\limsup_{r\to 0^+}\frac{\mu(B(x,r)\setminus E)}{\mu(B(x,r))}>0.
\]
The measure theoretic interior $I_E$ is the set of all points $x\in X$ for which
\[
\lim_{r\to0^+}\frac{\mu(B(x,r)\setminus E)}{\mu(B(x,r))}=0,
\]
and the measure theoretic exterior $O_E$ is the set of all points $x\in X$ for which
\[
\lim_{r\to0^+}\frac{\mu(B(x,r)\cap E)}{\mu(B(x,r))}=0.
\]
\end{definition}
Observe that 
$\partial^*E=X\setminus(I_E\cup O_E)$.
Note that when $E$ is open, $E\subset I_E$. 
See the discussion following~\eqref{eq:density of E} for more on the
relationship between the measure theoretic boundary and the perimeter measure.

A curve is a rectifiable continuous mapping from a compact interval
into $X$.

\begin{definition}
A nonnegative Borel function $g$ on $X$ is an \emph{upper gradient} 
of an extended real-valued function $u$
on $X$ if for all curves $\gamma$ on $X$, we have
\begin{equation}\label{eq:upper gradient definition}
|u(x)-u(y)|\le \int_\gamma g\,ds,
\end{equation}
where $x$ and $y$ are the end points of $\gamma$. We interpret $|u(x)-u(y)|=\infty$ whenever  
at least one of $|u(x)|$, $|u(y)|$ is infinite. 
\end{definition}

By replacing $X$ with a set $A\subset X$ and considering curves $\gamma$ in $A$, we can talk about a function $g$ being an upper gradient of $u$ in $A$.
Upper gradients were originally introduced in~\cite{HK}.

We define the local Lipschitz constant of a locally Lipschitz function $u\in\liploc(X)$ by
\begin{equation}\label{eq:local Lipschitz constant}
\Lip u(x):=\limsup_{r\to 0^+}\sup_{y\in B(x,r)\setminus \{x\}}\frac{|u(y)-u(x)|}{d(y,x)}.
\end{equation}
Then $\Lip u$ is an upper gradient of $u$, see e.g.~\cite[Proposition 1.11]{Che}.

It is easy to check that if $u,v\in \liploc(X)$ and $\alpha,\beta\ge 0$, then we have the subadditivity
\begin{equation}\label{eq:subadditivity of local Lipschitz constants}
\Lip (\alpha u+\beta v)(x)\le \alpha \Lip u(x)+\beta \Lip v(x)\quad \textrm{for }\textrm{every }x\in X.
\end{equation}

Let $\Gamma$ be a family of curves,
and let $1\le p<\infty$. The $p$-modulus of $\Gamma$ is defined
by
\[
\Mod_{p}(\Gamma):=\inf\int_{X}\rho^{p}\, d\mu
\]
where the infimum is taken over all nonnegative Borel functions $\rho$
such that $\int_{\gamma}\rho\,ds\ge 1$ for every $\gamma\in\Gamma$.
If a property fails only for a curve family with $p$-modulus zero,
we say that it holds for $p$-almost every (a.e.) curve.

\begin{definition}
If $g$ is a nonnegative $\mu$-measurable function on
$X$ and~\eqref{eq:upper gradient definition} holds for $p$-almost every curve, then
$g$ is a $p$\emph{-weak upper gradient} of $u$.
It is known that if $u$ has an upper gradient $g\in L^p_{\loc}(\Om)$
in $\Om$, then
there exists a minimal $p$-weak
upper gradient of $u$ in $\Omega$, which we always denote by $g_{u}$, satisfying $g_{u}(x)\le g(x)$ for
$\mu$-a.e. $x\in \Omega$, for any $p$-weak upper gradient $g\in L_{\loc}^{p}(\Omega)$
of $u$ in $\Omega$,
see~\cite[Theorem 2.25]{BB}.
\end{definition}

\begin{remark}\label{rmk:minimal weak upper gradients}
Note that a priori the minimal $p$-weak upper gradient $g_u$ of $u$ may depend on $p$.
However, if $u$ has a minimal $q$-weak upper gradient $g_0$ in $\Omega$ with $1\le q<p$, then
$g_0\le g_u$ $\mu$-a.e.~in $\Omega$ because a $p$-weak upper gradient of $u$ is automatically a
$q$-weak upper gradient of $u$.
Also, a minimal $p$-weak upper gradient in $\Omega$ is also a minimal $p$-weak
upper gradient in any open $U\subset \Omega$.

From the results in~\cite{Che} (see~\cite{HKST} for further exposition on this) it follows that when 
the measure $\mu$ on $X$ is doubling and supports a $(1,1)$-Poincar\'e inequality, the
minimal $p$-weak upper gradient of a locally Lipschitz function $u$ on $\Omega$ is
$\Lip u$ for all $1<p<\infty$.
\end{remark}

We consider the following norm
\[
\Vert u\Vert_{N^{1,p}(X)}:=\Vert u\Vert_{L^p(X)}+\inf_g\Vert g\Vert_{L^p(X)},
\]
with the infimum taken over all upper gradients $g$ of $u$. 

\begin{definition}
The substitute for the Sobolev space $W^{1,p}(\R^n)$ in the metric setting is the following 
Newton-Sobolev space 
\[
N^{1,p}(X):=\{u:\|u\|_{N^{1,p}(X)}<\infty\}/{\sim},
\]
where the equivalence relation $\sim$ is given by $u\sim v$ if and only if 
\[
\Vert u-v\Vert_{N^{1,p}(X)}=0.
\]
Similarly, we can define $N^{1,p}(\Omega)$ for any open set $\Omega\subset X$.
For more on Newton-Sobolev spaces, we refer to~\cite{S, HKST, BB}.
\end{definition}

The $p$-capacity of a set $A\subset X$ is given by
\[
 \capa_p(A):=\inf \Vert u\Vert_{N^{1,p}(X)},
\]
where the infimum is taken over all functions $u\in N^{1,p}(X)$ such that $u\ge 1$ in $A$.

\begin{remark}
When $\mu$ is doubling and supports a $(1,p)$-Poincar\'e 
inequality, then Lipschitz functions are dense in $N^{1,p}(X)$. When $X$ is complete and
$\mu$ is doubling, even if $X$ does not support a $(1,p)$-Poincar\'e inequality 
Lipschitz functions are still dense in $N^{1,p}(X)$; this follows from the deep results
in~\cite{AGS}.
\end{remark}

Next we recall the definition and basic properties of functions
of bounded variation on metric spaces, see~\cite{Miranda03}. See also
e.g.~\cite{AFP, Fed, Giu84, Zie89} for the classical 
theory in the Euclidean setting.
For $u\in L^1_{\loc}(X)$, we define the total variation of $u$ on $X$ to be 
\[
\|Du\|(X):=\inf\Big\{\liminf_{i\to\infty}\int_X g_{u_i}\,d\mu: u_i\in \liploc(X), u_i\to u\textrm{ in } L^1_{\loc}(X)\Big\},
\]
where each $g_{u_i}$ is the minimal $1$-weak upper gradient of $u_i$.
\label{p-weak upper gradients in total variation}
Note that instead of merely requiring
$u_i\to u$ in $L^1_{\loc}(X)$ we could require $u_i-u\to 0$ in $L^1(X)$. It turns out that even with this 
stricter definition, the norm $\Vert Du\Vert(X)$ does not change;
see Lemma~\ref{lem:L1 loc and L1 convergence}.
Note also that by~\cite[Theorem 1.1]{AmDi} and Remark~\ref{rmk:minimal weak upper gradients}, we can replace
$g_{u_i}$ by the minimal $p$-weak upper gradient $\Lip u_i$, for $p>1$.

We say that a function $u\in L^1(X)$ is \emph{of bounded variation}, 
and denote $u\in \BV(X)$, if $\|Du\|(X)<\infty$. 
A $\mu$-measurable set $E\subset X$ is said to be of \emph{finite perimeter} if $\|D\ch_E\|(X)<\infty$. The perimeter
of $E$ in $X$ is also denoted by
\[
P(E,X):=\|D\ch_E\|(X).
\]
By replacing $X$ with an open set $U\subset X$ in the definition of the total variation, we can define $\|Du\|(U)$.
The $\BV$ norm is given by
\begin{equation*}
\Vert u\Vert_{\BV(U)}:=\Vert u\Vert_{L^1(U)}+\Vert Du\Vert(U).
\end{equation*}
It was shown in~\cite[Theorem~3.4]{Miranda03} that for $u\in \BV(X)$, $\Vert Du\Vert$ is the restriction to the class of 
open sets of a finite Radon measure defined on the
class of all subsets of $X$. This outer measure is obtained from the map $U\mapsto\Vert Du\Vert(U)$ on open sets
$U\subset X$ via the standard Carath\'eodory construction. Thus, 
for an arbitrary set $A\subset X$, 
\[
\|Du\|(A):=\inf\bigl\{\|Du\|(U):\,U\text{ open},\, A\subset U \bigr\}.
\]
Similarly, if $u\in L^1_{\loc}(U)$ with $\Vert Du\Vert(U)<\infty$,
then $\|Du\|(\cdot)$ is a finite Radon measure on $U$.

For any Borel sets $E_1,E_2\subset X$, we have by~\cite[Proposition 4.7]{Miranda03}
\[
P(E_1\cap E_2,X)+P(E_1\cup E_2,X)\le P(E_1,X)+P(E_2,X). 
\]
The proof works equally well for $\mu$-measurable $E_1,E_2\subset X$ and with $X$
replaced by any open set, and then by approximating an arbitrary set $A\subset X$
from the outside by open sets we obtain
\begin{equation}\label{eq:Caccioppoli sets form an algebra}
P(E_1\cap E_2,A)+P(E_1\cup E_2,A)\le P(E_1,A)+P(E_2,A). 
\end{equation}

We have the following coarea formula from~\cite[Proposition 4.2]{Miranda03}: if $F\subset X$ is a Borel set and 
$u\in \BV(X)$, then
\begin{equation}\label{eq:coarea}
\|Du\|(F)=\int_{-\infty}^{\infty}P(\{u>t\},F)\,dt.
\end{equation}
In particular, the map $t\mapsto P(\{u>t\},F)$ is Lebesgue measurable on $\R$.

We assume that $X$ supports a $(1,1)$-Poincar\'e inequality,
meaning that there are constants $C_P>0$ and $\lambda \ge 1$ such that for every
ball $B(x,r)$, for every locally integrable function $u$ on $X$,
and for every upper gradient $g$ of $u$, we have 
\[
\vint_{B(x,r)}|u-u_{B(x,r)}|\, d\mu 
\le C_P r\vint_{B(x,\lambda r)}g\,d\mu,
\]
where 
\[
u_{B(x,r)}:=\vint_{B(x,r)}u\,d\mu :=\frac 1{\mu(B(x,r))}\int_{B(x,r)}u\,d\mu.
\]

Given a set $E\subset X$ of finite perimeter, for $\mathcal H$-a.e. $x\in \partial^*E$ we have
\begin{equation}\label{eq:density of E}
\gamma \le \liminf_{r\to 0^+} \frac{\mu(E\cap B(x,r))}{\mu(B(x,r))} \le \limsup_{r\to 0^+} \frac{\mu(E\cap B(x,r))}{\mu(B(x,r))}\le 1-\gamma,
\end{equation}
where $\gamma \in (0,1/2]$ only depends on the doubling constant and the constants in the Poincar\'e inequality, 
see~\cite[Theorem 5.4]{A1}. We denote the set of all such points by $\Sigma_\gamma E$.

For any open set $\Om\subset X$, any $\mu$-measurable set $E\subset X$ with $P(E,\Om)<\infty$, and any Borel set $A\subset \Om$, we know that 
\begin{equation}\label{eq:def of theta}
\Vert D\ch_{E}\Vert(A)=\int_{\partial^{*}E\cap A}\theta_E\,d\mathcal H,
\end{equation}
where
$\theta_E\colon \Om\cap \partial^*E\to [\alpha,C_d]$, with $\alpha=\alpha(C_d,C_P,\lambda)>0$, see~\cite[Theorem 5.3]{A1} 
and~\cite[Theorem 4.6]{AMP}.

The \emph{jump set} of $u\in \BV(X)$ is the set 
\[
S_{u}:=\{x\in X:\, u^{\wedge}(x)<u^{\vee}(x)\},
\]
where $u^{\wedge}(x)$ and $u^{\vee}(x)$ are the lower and upper approximate limits of $u$ defined respectively by
\begin{equation}\label{eq:lower approximate limit}
u^{\wedge}(x):
=\sup\left\{t\in\overline\R:\,\lim_{r\to 0^+}\frac{\mu(B(x,r)\cap\{u<t\})}{\mu(B(x,r))}=0\right\}
\end{equation}
and
\begin{equation}\label{eq:upper approximate limit}
u^{\vee}(x):
=\inf\left\{t\in\overline\R:\,\lim_{r\to 0^+}\frac{\mu(B(x,r)\cap\{u>t\})}{\mu(B(x,r))}=0\right\}.
\end{equation} 

By~\cite[Theorem 5.3]{AMP}, the variation measure of a $\BV$ function can be decomposed into the absolutely 
continuous and singular part, and the latter into the Cantor and jump part, as follows. Given an open set 
$\Omega\subset X$ and $u\in \BV(\Omega)$, we have for any Borel set $A\subset \Om$
\begin{equation}\label{eq:decomposition}
\begin{split}
\Vert Du\Vert(A)
&=\Vert Du\Vert^a(A)+\Vert Du\Vert^s(A)\\
&=\Vert Du\Vert^a(A)+\Vert Du\Vert^c(A)+\Vert Du\Vert^j(A)\\
&=\int_{A}a\,d\mu+\Vert Du\Vert^c(A)
   +\int_{A\cap S_u}\int_{u^{\wedge}(x)}^{u^{\vee}(x)}\theta_{\{u>t\}}(x)\,dt\,d\mathcal H(x),
\end{split}
\end{equation}
where $a\in L^1(\Omega)$ is the density of the absolutely continuous part and the functions $\theta_{\{u>t\}}$ 
are as in~\eqref{eq:def of theta}.

\begin{definition}\label{def:Madayan-traces}
Let $\Omega\subset X$ be a  $\mu$-measurable set
and let $u$ be a $\mu$-measurable
function on $\Omega$.  Let $N_\Om$ be the collection of all points
$x\in \partial\Om$ for which there is some $r>0$ with $\mu(B(x,r)\cap\Om)=0$.
A function $T_{+}u\colon\partial\Omega\setminus N_\Om\rightarrow\R$ is the interior trace of $u$ if for 
$\mathcal H$-a.e. $x\in \partial \Omega$ we have
\[
\lim_{r\rightarrow 0^+}\vint_{\Omega\cap B(x,r)}|u-T_{+}u(x)|\,d\mu=0.
\]
\end{definition}

Note that if $\Om$ is an open set, then $N_\Om$ is empty. Furthermore, we have
$N_{X\setminus\Om}\subset\partial\Om\setminus\partial^*\Om$.

\begin{definition}\label{def:BV0}
Given an open set $U\subset X$, the family $\BV_c(U)$ is the collection of all functions $u\in \BV(X)$ whose
support is a compact subset of $U$. By $\BV_0(U)$ we mean the collection of all functions $u\in \BV(U)$ for which
$T_+u$ exists and $T_+u=0$ $\mathcal{H}$-a.e.~in $\partial U$.
\end{definition}


\begin{definition}\label{def:p+}
Given an open set $\Omega\subset X$ and an open set $U\subset X$, we define
\[
P_+(\Omega,U):=\inf \left\{\liminf_{i\to\infty}\int_{U}g_{\Psi_i}\, d\mu\right\},
\]
where each $g_{\Psi_i}$ is the minimal $1$-weak upper gradient of $\Psi_i$ in $U$, and
where
the infimum is taken over all sequences $(\Psi_i)\subset\liploc(U)$ such that
 $\Psi_i-\ch_{\Omega}\to 0$ in
$L^1(U)$ and $\Psi_i=0$ in 
$U\setminus\Omega$ for each $i\in\N$.

Furthermore, for any $A\subset X$ we let
\[
P_+(\Omega,A):=\inf \left\{P_+(\Omega,U):\, U\textrm{ open},\, A\subset U\right\}.
\]
\end{definition}

In the Appendix we show that if $P_+(\Omega,X)<\infty$, then $P_+(\Omega,\cdot)$ is a 
Radon measure on $X$, which we call the \emph{inner perimeter measure} of $\Omega$.

Note that $P(\Om,A)\le P_+(\Om,A)$ for any $A\subset X$. We will show in
Section~6 that the two quantities $P(\Om,X)$ and $P_+(\Om,X)$ are in fact comparable
when $\Om$ is open and bounded and satisfies the \emph{exterior measure density} condition
\begin{equation}\label{eq:ext-meas-dens-cond}
\limsup_{r\to 0^+}\frac{\mu(B(x,r)\setminus\Omega)}{\mu(B(x,r))}>0\quad\textrm{for }
\mathcal H\textrm{-a.e. }x\in \partial\Omega.
\end{equation}

\begin{definition}
Let $1<p<\infty$ and let $\Om\subset X$ be a nonempty bounded open set
with $\capa_p(X\setminus\Om)>0$.
A function $u\in N^{1,p}(\Om)$ is said to be $p$-harmonic in
$\Om$ if whenever $\phi\in N^{1,p}(X)$ with $\phi=0$ in $X\setminus\Om$, we have
\[
\int_\Om g_u^p\, d\mu\le \int_\Om g_{u+\phi}^p\, d\mu.
\]
Given $f\in N^{1,p}(X)$, we say that a function $u$ is a $p$-harmonic solution to the Dirichlet problem
in $\Om$ with boundary data $f$ if $u\in N^{1,p}(X)$, $u$ is $p$-harmonic in $\Om$, and 
$u=f$ in $X\setminus\Om$.
\end{definition}

The direct method of the calculus of variation yields existence of $p$-harmonic solutions to the Dirichlet problem
($p>1$); see~\cite{S2, BB} for this fact and 
for more on $p$-harmonic functions.
If $f\colon\partial\Om\to\R$ is a Lipschitz function and $\Om$ is bounded, we can extend $f$ to a boundedly supported
Lipschitz function on $X$; such a function is necessarily in $N^{1,p}(X)$ for all $p\ge 1$. Thus we can also talk about 
solutions to the Dirichlet problem with Lipschitz boundary data $f\colon\partial\Om\to\R$. In this paper we will always assume
that the boundary data is a boundedly supported Lipschitz function on $X$.

We will often assume that $\capa_1(X\setminus\Om)>0$, because then
$\capa_p(X\setminus\Om)>0$ for all $p>1$. This follows from the fact that if
$\capa_p(X\setminus\Om)=0$, then $\Vert \ch_{X\setminus\Om}\Vert_{N^{1,p}(X)}=0$
by~\cite[Proposition 1.61]{BB},
and so $\Vert \ch_{X\setminus\Om}\Vert_{N^{1,1}(X)}=0$ by
Remark~\ref{rmk:minimal weak upper gradients}.

\begin{definition}\label{def:innerPerim}
Let $\Om\subset X$ be a an open set. We say that a function $u\in \BV(\Om)$ is a function of least gradient in $\Om$ if
whenever $\phi\in \BV_c(\Om)$, we have
\[
\Vert Du\Vert(\Om)\le \Vert D(u+\phi)\Vert(\Om).
\]
\end{definition}

The principal objects of study in this paper are functions of least gradient as defined above.

\section{Convergence to a function of least gradient}\label{sec:limit is least gradient function}

In this section we show that if there is an $L^1$-convergent sequence $(u_p)$ of $p$-harmonic functions with $p\to 1^+$, 
then the limit is a function of least gradient.
In this section, $g_{u_p}$ always denotes the minimal $p$-weak upper gradient of
$u_p\in N^{1,p}(X)$ on $X$.  If $g_p$ is the minimal $1$-weak upper gradient of $u_p$
on $X$, then
for any open set $U\subset X$, by the fact that locally Lipschitz functions are dense
in $N^{1,1}(U)$ (see~\cite[Theorem 5.47]{BB}) and by Remark
\ref{rmk:minimal weak upper gradients}, we have
\begin{equation}\label{eq:comparing variation and upper gradients}
\Vert Du_p\Vert(U)\le \int_{U}g_p\,d\mu\le \int_{U}g_{u_p}\,d\mu.
\end{equation}
For a Lipschitz function $f$, $g_f$ will denote the minimal $p$-weak upper gradient of
$f$ for any $p>1$. Observe from Remark~\ref{rmk:minimal weak upper gradients} that $g_f$ is indeed independent
of the choice of $p$.

First we note that while we do not know whether a sequence of $p$-harmonic functions is
$L^1$-convergent as $p\to 1^+$, a convergent subsequence always exists.

\begin{lemma}\label{lem:bv embedding lemma}
Let $\Om\subset X$ be a nonempty bounded open set with $\capa_1(X\setminus\Om)>0$,
and let $f\in\Lip(X)$ be boundedly supported.
For each $p>1$, let $u_p\in N^{1,p}(X)$ be 
a \p-harmonic function in $\Om$ such that $u_p|_{X\setminus\Om}=f$. 
Then there exists a sequence $p_k\to 1^+$
such that
$u_{p_k}\rightarrow u$ in $L^1(X)$ as $k\to\infty$
for some $u\in \BV(X)$.
\end{lemma}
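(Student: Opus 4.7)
The plan is to obtain a uniform BV bound on the sequence $(u_p)_{p>1}$ and then invoke a standard BV compactness theorem in doubling PI spaces.

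First I would fix a ball $B$ large enough to contain both $\overline\Om$ and $\supp f$, so that $u_p|_{X\setminus B}=f|_{X\setminus B}=0$ for every $p>1$. A maximum principle for $p$-harmonic functions (proved by truncating $u_p$ against $\sup f$ and $\inf f$ and testing against this truncation in the $p$-harmonic minimization) shows $\|u_p\|_{L^\infty(X)}\le\|f\|_{L^\infty(X)}$ uniformly in $p$, so $\|u_p\|_{L^1(X)}\le\|f\|_{L^\infty(X)}\mu(B)$ uniformly in $p$.

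The heart of the argument is the uniform upper gradient bound. Since $u_p$ is $p$-harmonic in $\Om$ and equal to $f$ on $X\setminus\Om$, the function $f$ is an admissible competitor, so
\[
\int_\Om g_{u_p}^p\,d\mu\le\int_\Om g_f^p\,d\mu\le (\Lip f)^p\mu(\Om).
\]
An application of H\"older's inequality with exponents $p$ and $p/(p-1)$ yields
\[
\int_\Om g_{u_p}\,d\mu\le\left(\int_\Om g_{u_p}^p\,d\mu\right)^{1/p}\mu(\Om)^{(p-1)/p}\le (\Lip f)\,\mu(\Om),
\]
which is independent of $p$. Outside $\Om$, $g_{u_p}=g_f\le\Lip f$ and is supported in $B$, so $\int_{X\setminus\Om}g_{u_p}\,d\mu\le(\Lip f)\mu(B)$. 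Combining these estimates with \eqref{eq:comparing variation and upper gradients} gives a uniform bound
\[
\|Du_p\|(X)\le\int_X g_{u_p}\,d\mu\le C(f,\mu(B))
\]
for all $p>1$.

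With $(u_p)$ uniformly bounded in $L^1(X)$ and in total variation, and all supported in the bounded set $B$, the BV compactness theorem in complete doubling metric measure spaces supporting a $(1,1)$-Poincar\'e inequality produces a sequence $p_k\to 1^+$ and a function $u\in L^1(X)$ with $u_{p_k}\to u$ in $L^1(X)$. Lower semicontinuity of the total variation with respect to $L^1$-convergence (which follows directly from its definition as an infimum of liminfs) then gives $\|Du\|(X)\le\liminf_k\|Du_{p_k}\|(X)<\infty$, so $u\in\BV(X)$. The main obstacle is the uniform BV bound, and that is handled cleanly by combining the $p$-harmonic minimality against $f$ with the H\"older step above, which exploits the fact that $p\to 1^+$ so that $\mu(\Om)^{(p-1)/p}\to 1$.
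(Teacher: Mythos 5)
Your proof is correct and follows essentially the same route as the paper: maximum principle for the $L^1$ bound, comparison with $f$ plus H\"older's inequality for the uniform total-variation bound, then the Miranda compactness theorem and lower semicontinuity. The only cosmetic differences are that you make the compactly supported-in-$B$ observation explicit (which is what upgrades $L^1_{\loc}$ convergence to $L^1$), and your final remark slightly overstates the role of $p\to 1^+$ (your H\"older step already gives the uniform bound $\Lip f\cdot\mu(\Om)$ for all $p$, so the limit $\mu(\Om)^{(p-1)/p}\to 1$ is not actually needed).
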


\begin{proof}
By the maximum principle for the Dirichlet problem for $p$-harmonic functions, 
$\|u_{p}\|_{L^\infty(X)}\leq\|f\|_{L^{\infty}(X)}$, and so for all $p>1$
\begin{align*}
\Vert u_p\Vert_{L^1(X)}
&\le \|u_{p}\|_{L^\infty(X)}\mu(\Om)+
\Vert f\Vert_{L^1(X\setminus\Om)}\\
&\le \|f\|_{L^\infty(X)}\mu(\Om)+\Vert f\Vert_{L^1(X)}<\infty.
\end{align*}
Let $L$ be the global Lipschitz constant of $f$. Then
\begin{align*}
 \int_\Omega g_{u_p}\, d\mu\le \left(\int_\Omega g_{u_p}^p\, d\mu\right)^{1/p}\mu(\Omega)^{1-1/p} 
& \le \left(\int_\Omega g_{f}^p\, d\mu\right)^{1/p}\mu(\Omega)^{1-1/p}\\
  &  \le L \mu(\Omega)^{1-1/p}.
\end{align*}
On the other hand,
\[
\int_{X\setminus\Omega}g_{u_p}\,d\mu=\int_{X\setminus\Omega}g_{f}\,d\mu,
\]
see~\cite[Lemma 2.19]{BB}.
Thus by~\eqref{eq:comparing variation and upper gradients},
\[
\Vert Du_p\Vert(X)\le L \mu(\Omega)^{1-1/p}+\int_{X\setminus\Omega}g_{f}\,d\mu.
\]
We conclude that the sequence $(u_p)_p$ is a bounded sequence in $\BV(X)$, 
and so by the compact embedding given in~\cite[Theorem~3.7]{Miranda03},
a subsequence converges in $L^1_{\loc}(X)$ and hence
in $L^1(X)$ to some function $u\in L^1(X)$, and by the lower semicontinuity
of the total variation, we have $u\in\BV(X)$.
\end{proof}

\begin{theorem}\label{thm:main1}
Let $\Om\subset X$ be a nonempty bounded open set with $\capa_1(X\setminus\Om)>0$,
and let $f\in\Lip(X)$ be boundedly supported. For each $p>1$ let $u_p\in N^{1,p}(X)$ be a 
\p-harmonic function in $\Om$ such that $u_p|_{X\setminus\Om}=f$. Suppose that $(u_p)_{p>1}$ is a
sequence of such \p-harmonic functions and that $u_p \to u$ in $L^1(X)$ as $p\to 1^+$. Then $u$ is a 
function of least gradient in $\Om$.
\end{theorem}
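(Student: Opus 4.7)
The target is $\|Du\|(\Om)\le\|D(u+\phi)\|(\Om)$ for every $\phi\in\BV_c(\Om)$. The plan is to combine the lower semicontinuity of the total variation under $L^1$-convergence, the minimization property of $p$-harmonic functions, and a Lipschitz approximation of $\phi$, linking the $p$- and $1$-energies via H\"older's inequality. As a first move, the lower semicontinuity of the total variation and~\eqref{eq:comparing variation and upper gradients} give
\[
\|Du\|(\Om)\le\liminf_{p\to1^+}\|Du_p\|(\Om)\le\liminf_{p\to1^+}\int_\Om g_{u_p}\,d\mu,
\]
so the task reduces to showing that $\liminf_{p\to1^+}\int_\Om g_{u_p}\,d\mu\le\|D(u+\phi)\|(\Om)$.

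For the upper bound I would first approximate $\phi$ by Lipschitz functions: since $\supp\phi$ is a compact subset of $\Om$, Lipschitz density in $\BV$ combined with a cutoff $\eta\in\Lipc(\Om)$ that equals $1$ on $\supp\phi$ produces a sequence $\phi_k\in\Lipc(\Om)$ with $\phi_k\to\phi$ in $L^1(X)$ and $\int_\Om g_{\phi_k}\,d\mu\to\|D\phi\|(\Om)$; the cutoff contributes nothing in the limit because $\phi\equiv 0$ on $\{g_\eta\ne 0\}$. For each $k$ the function $u_p+\phi_k$ lies in $N^{1,p}(X)$ and equals $u_p=f$ on $X\setminus\Om$, so it is a valid competitor for the $p$-harmonic problem solved by $u_p$. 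Consequently
\[
\int_\Om g_{u_p}^p\,d\mu\le\int_\Om g_{u_p+\phi_k}^p\,d\mu,
\]
and H\"older's inequality yields
\[
\int_\Om g_{u_p}\,d\mu\le\mu(\Om)^{1-1/p}\Bigl(\int_\Om g_{u_p+\phi_k}^p\,d\mu\Bigr)^{1/p}.
\]

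The decisive and most delicate step is the joint limit $p\to 1^+$, $k\to\infty$: the minimal $p$-weak upper gradient $g_{u_p+\phi_k}$ depends on $u_p$, so a direct dominated-convergence argument is not available. The plan is to combine the subadditivity $g_{u_p+\phi_k}\le g_{u_p}+g_{\phi_k}$ with Minkowski's inequality in $L^p(\Om)$ to split the right-hand side, then to control the two resulting terms separately: for the term involving $\phi_k$, dominated convergence gives $(\int_\Om g_{\phi_k}^p\,d\mu)^{1/p}\to\int_\Om g_{\phi_k}\,d\mu$ for each fixed Lipschitz $\phi_k$; for the term involving $u_p$ one uses the uniform bound $(\int_\Om g_{u_p}^p)^{1/p}\le L\mu(\Om)^{1/p}$ from the proof of Lemma~\ref{lem:bv embedding lemma}, together with the lower semicontinuity of the total variation applied to $u_p+\phi_k\to u+\phi_k$ in $L^1(X)$, which yields $\|D(u+\phi_k)\|(\Om)\le\liminf_{p\to 1^+}\int_\Om g_{u_p+\phi_k}\,d\mu$. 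A careful balance of these ingredients then extracts $\liminf_{p\to 1^+}\int_\Om g_{u_p}\,d\mu\le\|D(u+\phi_k)\|(\Om)$. Letting $k\to\infty$ and using the $L^1$-convergence $u+\phi_k\to u+\phi$ together with the energy convergence $\int g_{\phi_k}\to\|D\phi\|(\Om)$, the desired bound $\liminf_{p\to 1^+}\int_\Om g_{u_p}\,d\mu\le\|D(u+\phi)\|(\Om)$ follows, which together with the first paragraph closes the proof. The hardest point of the plan is precisely this final extraction step: converting the $p$-minimization inequality into a clean $1$-energy comparison requires simultaneously exploiting the uniform $L^p$-bounds on $g_{u_p}$, the vanishing of $\mu(\Om)^{1-1/p}-1$, and the lower semicontinuity of the total variation along a $p$-dependent competitor, and this is where the proof concentrates its technical work.
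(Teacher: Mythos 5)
The proposal identifies the right general framework (lower semicontinuity of total variation, $p$-harmonicity tested against a competitor, H\"older to interpolate between the $p$- and $1$-energies), but the central step --- labeled the ``careful balance'' --- is left as a black box, and I do not see how it can be carried out with the stated ingredients. The issue is structural: the competitor $u_p+\phi_k$ does not \emph{localize} the perturbation, so the resulting inequality $\int_\Om g_{u_p}^p\,d\mu\le\int_\Om g_{u_p+\phi_k}^p\,d\mu$ has $g_{u_p}$ appearing on the full domain on \emph{both} sides and is close to vacuous. Splitting $g_{u_p+\phi_k}\le g_{u_p}+g_{\phi_k}$ via Minkowski only produces an inequality of the form $\int g_{u_p}\le\mu(\Om)^{1-1/p}\bigl[(\int g_{u_p}^p)^{1/p}+(\int g_{\phi_k}^p)^{1/p}\bigr]$, whose first term already dominates the left-hand side by H\"older; nothing new is extracted. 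Nor does lower semicontinuity help: it gives $\|D(u+\phi_k)\|(\Om)\le\liminf_p\int_\Om g_{u_p+\phi_k}\,d\mu$, which is a \emph{lower} bound on the quantity you need to bound from \emph{above}. There is simply no mechanism in your outline to turn the $p$-minimization inequality into the desired $\liminf_p\int_\Om g_{u_p}\le\|D(u+\phi_k)\|(\Om)$.

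The paper's proof overcomes exactly this difficulty by constructing a competitor that \emph{agrees with $u_p$ outside a compact neighborhood} of $\supp\psi$. Namely, it fixes $\widetilde K\Subset\Om$ containing $\supp\psi$, a thin collar $\widetilde K^\eps$, a cutoff $\eta$ equal to $1$ on $\widetilde K$ and $0$ off $\widetilde K^{\eps/2}$, approximates $u+\psi$ (not $\psi$ alone) by Lipschitz functions $\Psi_k$ on $\widetilde K^\eps$, and uses $\psi_{k,p}=\eta\Psi_k+(1-\eta)u_p$. Because the competitor coincides with $u_p$ off $\widetilde K^{\eps/2}$, the $g_{u_p}$ contributions on most of $\Om$ cancel, and what survives is $g_{\Psi_k}$ (whose $L^1$-integral converges to $\|D(u+\psi)\|(\widetilde K^\eps)$), a gluing error, and an error term $\int_{\widetilde K^\eps\setminus\widetilde K}g_{u_p}^p\,d\mu$ supported on a thin annulus. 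The latter is handled by another device you don't have: passing to a subsequence so that $g_{u_p}^p\,d\mu\to d\nu$ weakly*, and choosing $\widetilde K$ with $\nu(\partial\widetilde K)=0$, so the annulus term vanishes as $\eps\to0$.

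A second, independent gap: even granted the estimate $\liminf_p\int_\Om g_{u_p}\le\|D(u+\phi_k)\|(\Om)$, your final passage $k\to\infty$ presumes $\|D(u+\phi_k)\|(\Om)\to\|D(u+\phi)\|(\Om)$. Strict convergence $\phi_k\to\phi$ does not imply this; total variation is not continuous under addition by a fixed BV function. Lower semicontinuity again goes the wrong way, and subadditivity only yields $\limsup_k\|D(u+\phi_k)\|\le\|Du\|+\|D\phi\|$, which is generally strictly larger than $\|D(u+\phi)\|$. The paper avoids this by approximating $u+\psi$ directly (so $\int g_{\Psi_k}\to\|D(u+\psi)\|(\widetilde K^\eps)$ is immediate), at the cost that $\Psi_k$ is no longer a legitimate global competitor on its own, which is precisely why the gluing with $u_p$ via the cutoff $\eta$ is needed.
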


\begin{proof}
By the proof of Lemma~\ref{lem:bv embedding lemma}, we have $u\in \textrm{BV}(X)$.
Let $\psi\in \textrm{BV}_c(\Om)$ and $K:=\spt(\psi)$. Clearly
\[
\int_{\Omega}g_{u_{p}}^p\,d\mu\leq \int_{\Omega}g_{f}^p\,d\mu\leq L^p\mu(\Omega),
\]
where  $L$ is the global Lipschitz constant of $f$, and therefore  $(g_{u_p}^p)_{1<p<2}$ is uniformly bounded in $L^1(\Om)$. Consequently, there exists a subsequence, still written as $(g_{u_p}^p)_{p>1}$, and a positive Radon measure of finite mass $\nu$ on $\Om$ such that
\[
g_{u_p}^p\, d\mu \to d\nu \quad \textrm{weakly* in } \Om \textrm{ as } p\to 1^+.
\]
We now choose $\widetilde K\Subset \Omega$ such that $K\subset \widetilde K$ and $\nu(\partial \widetilde K)=0$. For small enough $\eps>0$, 
\[
\widetilde K^\eps := \bigcup_{x\in \widetilde K}B(x,\eps) \Subset \Om.
\]
We fix $\eta\in\Lip(X)$ such that $0\leq\eta\leq 1$, 
\[
\eta =1\text{ in } \widetilde K, \quad  \eta = 0\text{ in }X\setminus \widetilde K^{\eps/2}, \quad\text{and }\ g_\eta\le 2/\eps.
\]
As $u+\psi\in \BV(\widetilde K^\eps)$, there exists a sequence
$(\Psi_k)\subset \liploc(\widetilde K^\eps)$ such that $\Psi_k \to u+\psi$ in
$L^1(\widetilde K^\eps)$ and
\begin{equation}\label{eq:variation of u plus psi}
\|D(u+\psi)\|(\widetilde K^\eps) = \lim_{k\to\infty}\int_{\widetilde K^\eps}g_{\Psi_k}\, d\mu,
\end{equation}
where $g_{\Psi_k}$ is the minimal $p$-weak upper gradient of $\Psi_k$ in
$\widetilde K^\eps$, for $p>1$, see the discussion on page \pageref{p-weak upper gradients in total variation}.
We set
\[
\psi_{k,p} := \eta\Psi_k + (1-\eta)u_p.
\]
Then $\psi_{k,p}=u_p$ in $X\setminus \widetilde K^{\eps/2}$ and $\psi_{k,p}=\Psi_k$ in $\widetilde K$. By the Leibniz rule given in~\cite[Lemma~2.18]{BB},
\begin{align*}
g_{\psi_{k,p}} &\le 
g_{\Psi_k}\eta + g_{u_p}(1-\eta) + 
g_\eta|\Psi_k-u_p|\\
&\le g_{\Psi_k}\ch_{\widetilde K^{\eps/2}} + g_{u_p}\ch_{X\setminus \widetilde K} +
 (2/\eps)|\Psi_k-u_p|\ch_{\widetilde K^{\eps/2}\setminus \widetilde{K}}.
\end{align*}
Since $u_p$ is \p-harmonic, we have
\begin{align*}
\left(\int_{\widetilde{K}^\eps}g_{u_p}^p\, d\mu\right)^{1/p} & \leq \left(\int_{\widetilde{K}^\eps}g_{\psi_{k,p}}^p\, d\mu\right)^{1/p} \\
& \leq \left(\int_{\widetilde{K}^{\eps/2}}g_{\Psi_k}^p\, d\mu\right)^{1/p} + \left(\int_{\widetilde{K}^\eps\setminus \widetilde{K}}g_{u_p}^p\, d\mu\right)^{1/p} \\
& \qquad \quad +\frac{2}{\eps} \left(\int_{\widetilde{K}^{\eps/2}\setminus \widetilde{K}}\vert \Psi_k-u_p\vert^p\, d\mu\right)^{1/p}.
\end{align*}
Therefore, by lower semicontinuity and~\eqref{eq:comparing variation and upper gradients} we get
\begin{align*}
\|Du\|(\widetilde{K}^\eps)
&\le \liminf_{p\to 1^+}\|Du_p\|(\widetilde{K}^\eps)\\
& \leq \liminf_{p\to 1^+}\int_{\widetilde{K}^\eps}g_{u_p}\, d\mu \\
& \leq \liminf_{p\to 1^+}\mu(\widetilde{K}^\eps)^{1-1/p}\left(\int_{\widetilde{K}^\eps}g_{u_p}^p\, d\mu\right)^{1/p} \\ 
& \leq \int_{\widetilde{K}^{\eps/2}}g_{\Psi_k}\, d\mu+ \limsup_{p\to 1^+}\left(\int_{\widetilde{K}^\eps\setminus \widetilde{K}}g_{u_p}^p\, d\mu\right)^{1/p} \\
& \qquad \quad + \frac{2}{\eps}\int_{\widetilde{K}^{\eps/2}\setminus \widetilde{K}}\vert \Psi_k-u\vert\, d\mu,
\end{align*}
which in turn leads to
\[
\|Du\|(\widetilde{K}) \leq \int_{\widetilde{K}^{\eps/2}}g_{\Psi_k}\, d\mu
 + \nu(\overline{\widetilde{K}^\eps}\setminus \widetilde{K}) 
  + \frac{2}{\eps} \int_{\widetilde{K}^{\eps/2}\setminus \widetilde{K}}\vert \Psi_k-u\vert\, d\mu.
\]
Letting $k\to \infty$, we get by~\eqref{eq:variation of u plus psi}
\[
\|Du\|(\widetilde{K}) \le \|D(u+\psi)\|(\widetilde K^\eps)+
\nu(\overline{\widetilde{K}^\eps}\setminus \widetilde{K}).
\]
Then letting $\eps\to 0$, by the fact that $\nu(\partial \widetilde{K})=0$ we get 
\[
\|Du\|(\widetilde{K}) \leq \|D(u+\psi)\|(\widetilde{K}).
\]
The claim follows from this.
\end{proof}

%

\section{Definitions of the Dirichlet problem for $p=1$}

The focus of this paper is to show that the limit of $p$-harmonic functions with 
Lipschitz boundary data $f$, as $p\to 1^+$, solves a reasonable notion of a Dirichlet problem with boundary 
data $f$. The issue is to give such a notion.
In the case of 
the $p$-energy, there is no ambiguity in the sense in which we want to fix the boundary values
of the function, if the boundary values are themselves restrictions
of Newton-Sobolev functions.
In the case $p=1$, we propose
the following two ways of defining solutions to the Dirichlet problem.

\begin{definition}\label{def:dirichlet}
Let $\Omega\subset X$ be a nonempty bounded open set
 with $\capa_1(X\setminus\Om)>0$, and let $f\in \Lip(X)$
be boundedly supported. We say that a function $u$
is a solution to the Dirichlet problem for functions of least gradient with boundary data $f$ in the sense of~(B) (respectively
in the sense of~(T)) if it is a solution to the following minimization problem:
\begin{itemize}
\item[(B)] Minimize $\Vert Dv\Vert(\overline{\Omega})$ over all functions $v\in \BV(X)$ with $v=f$ on 
$X\setminus\overline{\Omega}$,
\item[(T)] Minimize $\Vert Dv\Vert(\Omega)+\int_{\partial\Omega}|T_+v-f|(x)\, dP_+(\Omega,x)$ over all 
functions $v\in \BV(\Omega)$.
\end{itemize}
\end{definition}

Note that in definition (T), we need to make extra assumptions on $\Om$
to ensure that the boundary integral is well defined.
In both definitions, the solution is allowed to have jumps on the boundary of $\Omega$.
In definition (B),  this is
taken into account by including the variation measure from the boundary $\partial \Omega$ as well. 
The advantage of this approach is that its energy is 
more straightforward to calculate, and we need fewer assumptions on $\Om$.
The 
drawback is that  contrary to the formulation (T), the structure of the underlying space $X$ close to the  boundary but 
outside $\Omega$ also affects the minimization problem. For instance, let $X$ be the Euclidean
space $\R^n$ equipped with the Euclidean metric, and let $\Om$ be the unit ball
centered at the origin. Let
$\alpha\in(0,1]$ and equip $X$ with the measure
\[
d\mu_\alpha:=(\ch_\Om+\alpha\ch_{\R^n\setminus\Om})\,d\mathcal{L}^n,
\]
where $\mathcal L^n$ is the $n$-dimensional Lebesgue measure.
It can be shown that for $u\in\BV(X)$, $\Vert Du\Vert(\overline{\Om})=\Vert D_{\text{Euc}} u\Vert(\Om)+\alpha\Vert D_{\text{Euc}} u\Vert(\partial\Om)$, where
$\Vert D_{\text{Euc}} u\Vert$ is the total variation with respect to $\mathcal L^n$.
Similarly, in this setting we have $P_+(\Om,X)=2\pi$ but $P(\Om,X)=2\alpha\pi$.

\section{Discrete convolutions}

A tool that is commonly used in analysis on metric spaces is the \emph{discrete convolution}.
Given any  open set $U\subset X$ and a scale $R>0$, we can choose a \emph{Whitney-type covering} 
$\{B_j=B(x_j,r_j)\}_{j=1}^{\infty}$ of $U$ such that (see e.g.~\cite[Theorem 3.1]{BBS})
\begin{enumerate}
\item for each $j\in\N$,
\[
r_j= \min\left\{\frac{\dist(x_j,X\setminus U)}{40\lambda},\,R\right\},
\]
\item for each $k\in\N$, the ball $10\lambda B_k$ intersects at most $C_0=C_0(C_d,\lambda)$ 
balls $10\lambda B_j$ (that is, a bounded overlap property holds),
\item  
if $10\lambda B_j$ intersects $10\lambda B_k$, then $r_j\le 2r_k$.
\end{enumerate}

Given such a covering of $U$, 
we can take a partition of unity $\{\phi_j\}_{j=1}^{\infty}$ subordinate to the covering, such that $0\le \phi_j\le 1$, each 
$\phi_j$ is a $C/r_j$-Lipschitz function, and $\supp(\phi_j)\subset 2B_j$ for each 
$j\in\N$ (see e.g.~\cite[Theorem 3.4]{BBS}). Finally, we can define the \emph{discrete convolution} $v$ of 
any $u\in L_{\loc}^1(U)$ with respect to the Whitney-type covering by
\[
v:=\sum_{j=1}^{\infty}u_{5B_j}\phi_j.
\]
In general, $v\in\liploc(U)$, and hence $v\in L^1_{\loc}(U)$.

Let $v$ be the discrete convolution of $u\in L^1_{\loc}(U)$ with
$\Vert Du\Vert(U)<\infty$,
with respect to a Whitney-type covering $\{B_j\}_{j\in\N}$ of $U$ at scale $R$. Then $v$ has a local Lipschitz constant
\begin{equation}\label{eq:upper gradient of discrete convolution}
\Lip v\le C_{\textrm{lip}}\sum_{j=1}^{\infty}\ch_{B_j}\frac{\Vert Du\Vert(10\lambda B_j)}{\mu(B_j)},
\end{equation}
with $C_{\textrm{lip}}$ depending only on the doubling constant of the measure and the constants in the Poincar\'e inequality, see e.g. the proof of~\cite[Proposition~4.1]{KKST3}. From this it follows by the bounded overlap property (2) that
\begin{equation}\label{eq:total variation of discrete convolution}
\int_{U}\Lip v \,d\mu\le C_0 C_{\textrm{lip}}\Vert Du\Vert(U).
\end{equation}
Moreover (noting that $v$ depends on the scale $R$),
\begin{equation}\label{eq:L1 approximation property of DC}
\Vert v-u\Vert_{L^1(U)}\to 0\quad\textrm{as}\quad R\to 0,
\end{equation}
see the proof of~\cite[Proposition 4.1]{KKST3};
note that $u$ does not need to be in $L^1(U)$, only in $L_{\loc}^1(U)$.

Now let $(v_i)$ be a sequence of discrete convolutions of $u\in \BV_{\loc}(U)$
with respect to Whitney-type coverings at scales $R_i\searrow 0$.
According to~\cite[Proposition 4.1]{KKST3}, we have for some constant
$\widetilde{\gamma}\in (0,1/2]$
\begin{equation}\label{eq:pointwise convergence of discrete convolutions}
\begin{split}
&(1-\widetilde{\gamma}) u^{\wedge}(y)+ \widetilde{\gamma} u^{\vee}(y)
\le \liminf_{i\to \infty}v_i(y)\\
&\qquad\qquad\qquad\qquad \le \limsup_{i\to \infty}v_i(y)
\le \widetilde{\gamma} u^{\wedge}(y)+ (1-\widetilde{\gamma})u^{\vee}(y)
\end{split}
\end{equation}
for $\mathcal H$-a.e. $y\in U$; recall the definitions of the lower and upper approximate
limits from~\eqref{eq:lower approximate limit} and~\eqref{eq:upper approximate limit}.

By applying discrete convolutions, we can show that in the definition of the
total variation, we can replace convergence in $L^1_{\loc}(\Omega)$ with
convergence in $L^1(\Omega)$.

\begin{lemma}\label{lem:L1 loc and L1 convergence}
Let $\Omega\subset X$ be an open set and let $u\in L^1_{\loc}(\Omega)$
with $\Vert Du\Vert(\Omega)<\infty$. Then there exists a sequence of functions
$(w_i)\subset \liploc(\Omega)$ with $w_i-u\to 0$ in $L^1(\Omega)$ and 
$\int_\Omega g_{w_i}\, d\mu\to \Vert Du\Vert(\Omega)$, where each $g_{w_i}$ is the minimal
$1$-weak upper gradient of $w_i$.
\end{lemma}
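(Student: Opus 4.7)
The plan is to splice the defining sequence for $\Vert Du\Vert(\Omega)$, which only converges in $L^1_{\loc}(\Omega)$, with discrete convolutions of $u$, which converge in $L^1(\Omega)$ but a priori carry a slightly larger total variation, by means of a Lipschitz cutoff. Concretely, fix a defining sequence $(u_i)\subset\liploc(\Omega)$ with $u_i\to u$ in $L^1_{\loc}(\Omega)$ and $\int_\Omega g_{u_i}\,d\mu\to\Vert Du\Vert(\Omega)$; and for each scale $R>0$ let $v_R$ be a discrete convolution of $u$ on $\Omega$ at scale $R$, so that $\Vert v_R-u\Vert_{L^1(\Omega)}\to 0$ as $R\to 0$ by~\eqref{eq:L1 approximation property of DC}, and $\Lip v_R$ obeys the pointwise bound~\eqref{eq:upper gradient of discrete convolution}.

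Fix an exhaustion $K_1\Subset K_1'\Subset K_2\Subset K_2'\Subset\cdots\Subset\Omega$ with $\Omega=\bigcup_j K_j$, and Lipschitz cutoffs $\eta_j$ with $\eta_j\equiv 1$ on $K_j$, $\eta_j\equiv 0$ outside $K_j'$, and $g_{\eta_j}\le C/\delta_j$ where $\delta_j=\dist(K_j, X\setminus K_j')$. Since $\Vert Du\Vert$ is a finite Radon measure on $\Omega$, $\Vert Du\Vert(\Omega\setminus K_j)\to 0$ as $j\to\infty$. By a diagonal selection, I would choose $j(i)\to\infty$ and $R_i\searrow 0$ such that $\int_\Omega g_{u_i}\,d\mu<\Vert Du\Vert(\Omega)+1/i$, both $\Vert u_i-u\Vert_{L^1(K_{j(i)}')}$ and $\Vert v_{R_i}-u\Vert_{L^1(\Omega)}$ are smaller than $\delta_{j(i)}/i$, and $10\lambda R_i$ is so small that every Whitney-type ball for $v_{R_i}$ meeting $\Omega\setminus K_{j(i)}$ has its $10\lambda$-dilation inside $\Omega\setminus K^*_{j(i)}$ for some compact $K^*_{j(i)}\subset K_{j(i)}$ with $\Vert Du\Vert(\Omega\setminus K^*_{j(i)})\to 0$. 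Set $w_i:=\eta_{j(i)} u_i+(1-\eta_{j(i)})v_{R_i}\in\liploc(\Omega)$.

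For the $L^1$ convergence, $\Vert w_i-u\Vert_{L^1(\Omega)}\le \Vert u_i-u\Vert_{L^1(K_{j(i)}')}+\Vert v_{R_i}-u\Vert_{L^1(\Omega)}\to 0$. For the total variation, the Leibniz rule gives
\[
g_{w_i}\le g_{u_i}\ch_{K_{j(i)}'}+g_{v_{R_i}}\ch_{\Omega\setminus K_{j(i)}}+g_{\eta_{j(i)}}|u_i-v_{R_i}|\ch_{K_{j(i)}'\setminus K_{j(i)}}.
\]
The first term integrates to at most $\Vert Du\Vert(\Omega)+1/i$; by~\eqref{eq:upper gradient of discrete convolution} and the bounded overlap property of the Whitney covering, the second is at most $C\Vert Du\Vert(\Omega\setminus K^*_{j(i)})\to 0$; the cross term is at most $(C/\delta_{j(i)})\cdot 2\delta_{j(i)}/i=2C/i\to 0$. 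Combined with the lower semicontinuity of the total variation under $L^1_{\loc}$ convergence, which already ensures $\liminf_i\int_\Omega g_{w_i}\,d\mu\ge\Vert Du\Vert(\Omega)$, this yields $\int_\Omega g_{w_i}\,d\mu\to\Vert Du\Vert(\Omega)$.

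The main technical obstacle is controlling the discrete-convolution piece $\int_{\Omega\setminus K_{j(i)}}\Lip v_{R_i}\,d\mu$: the pointwise estimate~\eqref{eq:upper gradient of discrete convolution} produces $\Vert Du\Vert$ on $10\lambda$-dilations of the Whitney-type balls, so the scale $R_i$ must be chosen small enough relative to $\delta_{j(i)}$ that these dilations remain inside a set of small $\Vert Du\Vert$-mass. The other pieces—the defining sequence, the discrete convolution approximation, and the Lipschitz cutoff—are standard; the subtlety lies entirely in assembling them with the right hierarchy of parameters.
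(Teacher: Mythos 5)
Your proof is correct and follows essentially the same strategy as the paper's: splice the $L^1_{\loc}$-defining sequence $(u_i)$ with discrete convolutions of $u$ via a Lipschitz cutoff, using~\eqref{eq:upper gradient of discrete convolution} together with the Whitney bounded-overlap property to bound the discrete-convolution contribution near $\partial\Omega$ by a small tail of $\Vert Du\Vert$. The only cosmetic difference is that the paper builds its cutoff from $\Omega_\delta\cap B(x,1/\delta)$ and diagonalizes over $\eps$ at the end, whereas you set up the compact exhaustion $K_j\Subset K_j'$ and select all parameters in one diagonal pass; these are equivalent, though your selection is cleaner if you index it as ``for each $j$ choose $i(j)$ and $R(j)$'' rather than ``for each $i$ choose $j(i)$,'' to avoid the awkwardness of making $\Vert u_i-u\Vert_{L^1(K_{j(i)}')}$ small by shrinking $j(i)$.
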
 

Note that we cannot write $w_i\to u$ in $L^1(\Om)$, if we do not have $u\in L^1(\Om)$.

\begin{proof} 
For every $\delta>0$, let
\[
\Omega_{\delta}:=\{y\in\Omega:\,\dist(y,X\setminus\Omega)>\delta\}. 
\]
Fix $\eps>0$ and $x\in X$, and choose $\delta\in (0,1)$ such that
\[
\Vert Du\Vert(\Omega\setminus(\Omega_{\delta}\cap B(x,1/\delta)))<\eps.
\]
Let
\[
\eta(y):=\max\left\{0,1-\frac 4\delta \dist(y,\Omega_{\delta/2}\cap B(x,2/\delta))\right\},
\]
which is a $4/\delta$-Lipschitz function.

Let each $v_i\in\liploc(\Omega)$ be a discrete convolution of $u$ in $\Omega$, at scale 
$1/i$. From the definition of the total variation we get a sequence of 
functions $u_i\in\liploc(\Omega)$ with $u_i\to u$ in $L_{\loc}^1(\Omega)$ and
\[
\int_\Omega g_{u_i}\, d\mu \to\Vert Du\Vert(\Omega).
\]
Now define
\[
w_i:=\eta u_i+(1-\eta)v_i,
\]
so that $w_i- u\to 0$ in $L^1(\Omega)$  by~\eqref{eq:L1 approximation property of DC}, and by the Leibniz rule of~\cite[Lemma~2.18]{BB}, 
\[
g_{w_i}\le g_{u_i}\eta +g_{v_i}(1-\eta)+g_{\eta}|u_i-v_i|.
\]
Here $g_{w_i}$, $g_{u_i}$, $g_{v_i}$, and $g_{\eta}$ all denote minimal $1$-weak upper gradients.
Since $g_{\eta}=0$ outside $\Omega_{\delta/4}\cap B(x,4/\delta)\Subset \Omega$, we have 
$g_{\eta}|u_i-v_i|\to 0$ in $L^1(\Omega)$, and by also  
using~\eqref{eq:upper gradient of discrete convolution}, we get
\begin{align*}
\limsup_{i\to\infty}\int_{\Omega}g_{w_i}\,d\mu
&\le \limsup_{i\to\infty}\int_{\Omega}g_{u_i}\,d\mu
    +\limsup_{i\to\infty}\int_{\Omega\setminus (\Omega_{\delta/2}\cap B(x,2/\delta))}g_{v_i}\,d\mu\\
&\le \Vert Du\Vert(\Omega)+C\Vert Du\Vert(\Omega\setminus(\Omega_{\delta}\cap B(x,1/\delta)))\\
&\le \Vert Du\Vert(\Omega)+C\eps.
\end{align*} 
By a diagonalization argument, where we also let $\varepsilon\to 0$ (and hence $\delta\to 0$), we complete the proof.
\end{proof}

\section{Comparability of $P_+$ and $P$}

Recall the definition of $P_+(\Om,\cdot)$ from Definition~\ref{def:p+}.
As shown by the example found
in the discussion following  Definition~\ref{def:dirichlet}, $P_+(\Om,\cdot)$
does not necessarily agree with $P(\Om,\cdot)$. In light of this, the current section aims to compare
$P_+(\Om,\cdot)$ and $P(\Om,\cdot)$.
The main
result of this section is Theorem~\ref{thm:P+vsP}.

An analog of $P_+(\Om,X)$ was studied in~\cite{Sch}, where it was shown that for certain open sets
$\Om\subset\R^n$, one has $P_+(\Om,\R^n)=P(\Om,\R^n)$.
More precisely, it was shown that in the Euclidean setting, if an open set
$\Om\subset\R^n$
satisfies $\mathcal{H}^{n-1}(\partial \Om\setminus\partial^*\Om)=0$, then it is possible to
find open sets $\Om_i\Subset \Om$ with $\Om=\bigcup_{i\in\N}\Om_i$ and
$\mathcal H^{n-1}(\partial\Om_i)\to P(\Om,\R^n)$, where $\mathcal H^{n-1}$ is the $(n-1)$-dimensional
Hausdorff measure.  We obtain in Corollary~\ref{cor:approximation finite perimeter from inside}
a weak analog of this result. In fact, our corollary is applicable to a wider class of
Euclidean domains than the result of~\cite{Sch},
since we can permit the
part of the boundary in which $\Om$ is ``thin" to be very large.

In the following lemma, we essentially follow an argument that can be found e.g. in
\cite[p. 67]{Rud}.

\begin{lemma}\label{lem:using Mazurs lemma}
Let $K\subset X$ be compact, and let $\alpha\in [0,1)$ and $\eps>0$. Take a sequence $(v_i)\subset C(K)$ with
$0\le v_i\le 1$ for every $i\in\N$, and
\[
\limsup_{i\to\infty}v_i(x)\le \alpha
\]
for every $x\in K$. Then there exists a convex combination of $v_i$, denoted
by $\widehat{v}$, such that 
 $\widehat{v}(x)\le \alpha+\eps$ for every $x\in K$.
\end{lemma}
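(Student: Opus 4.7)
The plan is to argue by contradiction and Hahn--Banach separation, in the spirit of~\cite[p.~67]{Rud}. Let $C\subset C(K)$ denote the convex hull of $\{v_i\}_{i\in\N}$ and
\[
D:=\bigl\{f\in C(K):\sup_{x\in K}f(x)<\alpha+\eps\bigr\},
\]
an open convex subset of $C(K)$ in the sup norm. It suffices to show $C\cap D\ne\emptyset$, since any $\widehat v\in C\cap D$ satisfies $\widehat v(x)<\alpha+\eps$ on $K$. Suppose on the contrary that $C\cap D=\emptyset$. The Hahn--Banach separation theorem then produces a nonzero bounded linear functional $\Lambda\in C(K)^{*}$ and a constant $c\in\R$ with $\Lambda f<c$ for every $f\in D$ and $\Lambda f\ge c$ for every $f\in C$. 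By the Riesz representation theorem, $\Lambda f=\int_{K}f\,d\mu$ for a nonzero signed Radon measure $\mu$ on $K$.

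Next I would verify that $\mu$ is positive and has $\mu(K)>0$. For any continuous $f\le 0$ on $K$ and any $t>0$, the function $tf$ lies in $D$ (since $\sup_{K}tf\le 0<\alpha+\eps$), so $t\int f\,d\mu<c$ for all $t>0$; letting $t\to\infty$ forces $\int f\,d\mu\le 0$, which gives $\mu\ge 0$. Since $\Lambda\ne 0$, we conclude $\mu(K)>0$. Inserting the constant function $\alpha+\eps-\delta\in D$ (valid for each $\delta\in(0,\alpha+\eps)$) into $\Lambda f<c$ and letting $\delta\to 0^{+}$ yields $(\alpha+\eps)\mu(K)\le c$, while $v_i\in C$ gives $c\le\int_K v_i\,d\mu$, so
\[
(\alpha+\eps)\mu(K)\le \int_{K}v_{i}\,d\mu \quad\text{for every } i\in\N.
\]

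The proof is then finished via the reverse Fatou lemma, whose hypotheses hold because $0\le v_{i}\le 1$ and $\mu(K)<\infty$:
\[
\limsup_{i\to\infty}\int_{K}v_{i}\,d\mu\le \int_{K}\limsup_{i\to\infty}v_{i}\,d\mu\le \alpha\,\mu(K).
\]
Combined with the previous display this forces $(\alpha+\eps)\mu(K)\le\alpha\,\mu(K)$, contradicting $\eps>0$ and $\mu(K)>0$. The one delicate point in the argument is the identification of the separating functional with a \emph{positive} measure of \emph{strictly positive} total mass, which relies crucially on the shape of $D$ (namely, that it is closed under adding arbitrarily negative continuous perturbations and that it contains constants arbitrarily close to $\alpha+\eps$); once this is in place, the conclusion follows from reverse Fatou applied to the given pointwise bound on $\limsup_i v_i$.
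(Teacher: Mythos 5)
Your proof is correct, and it takes a genuinely different route from the paper's. The paper first replaces $v_i$ by $\max\{v_i,\alpha\}$, which converges pointwise (hence by dominated convergence and Riesz, weakly in $C(K)$) to the constant $\alpha$; it then invokes Mazur's lemma to produce convex combinations that converge \emph{uniformly} to $\alpha$, and finally transfers the conclusion back to the $v_i$ using the monotonicity $v_i\le\max\{v_i,\alpha\}$. You instead argue by contradiction, separating the convex hull $C$ of $\{v_i\}$ from the open convex ``target'' set $D$ via Hahn--Banach, identifying the separating functional (using the structure of $D$) as a strictly positive Radon measure, and then contradicting the separation inequality with reverse Fatou. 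Both proofs ultimately rest on the same pillars---Hahn--Banach separation (the engine behind Mazur's lemma as well) plus Riesz representation---so the difference is one of packaging rather than substance. Your version is arguably more self-contained (it unrolls Mazur's lemma into a single separation step and avoids the preliminary truncation), at the mild cost of being an argument by contradiction and requiring the extra verification that the separating measure is positive with positive mass. The paper's route has the advantage of producing uniform convergence of a convex-combination sequence, which is a slightly stronger statement, though the lemma only needs the single convex combination you obtain.

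One small point worth flagging explicitly when you write this up: after Hahn--Banach you need $\Lambda\ne 0$ \emph{and} $\mu\ge 0$ to conclude $\mu(K)>0$; you do state both, and the positivity step (scaling nonpositive $f$ by $t\to\infty$ within $D$) is the correct argument, but it is the step where carelessness would break the proof, so it deserves the emphasis you give it at the end.
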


\begin{proof}
We have
\[
\lim_{i\to\infty}\max\{v_i(x),\alpha\}=\alpha
\]
for every $x\in K$.
Note that the functions $\max\{v_i(x),\alpha\}$, and the constant function $\alpha$, are continuous and take values between $0$ and $1$. Thus 
for any signed Radon measure $\nu$ on $K$ we have by Lebesgue's dominated convergence theorem that
\[
\int_K \max\{v_i(x),\alpha\}\,d\nu\to \int_K \alpha\,d\nu.
\]
Since $K$ is compact, we have $C(K)=C_c(K)$ and then by the Riesz representation theorem
we conclude that
$\max\{v_i(x),\alpha\}\to \alpha$ weakly in the space $C(K)$. By Mazur's lemma, see~\cite[Theorem 3.13]{Rud}, we can find convex combinations of
the functions $w_i:=\max\{v_i,\alpha\}$, denoted by $\widehat{w}_i$, which converge strongly in the space $C(K)$ to $\alpha$. In other words, $\widehat{w}_i\to \alpha$ \emph{uniformly} in $K$.
Thus for a sufficiently large choice of
$i\in\N$, we have
$\widehat{w}_i(x)\le \alpha+\eps$ for all $x\in K$. 
With $\widehat{w}_i=\sum_{j=1}^N\lambda_{i,j} w_j$ for some $N\in\N$ and the
appropriate choice of
numbers $\lambda_{i,j}\in[0,1]$ such that $\sum_{j=1}^N\lambda_{i,j}=1$, we set
$\widehat{v}=\sum_{j=1}^N\lambda_{i,j} v_j$.
\end{proof}

\begin{proposition}\label{prop:approximation with Lip functions from inside}
Let $\Omega,U\subset X$ be open sets with $P(\Omega,U)<\infty$, and suppose that there exists
$A\subset \partial\Omega\cap U$ with $\mathcal H(A)<\infty$ such that
\begin{equation}\label{eq:exterior measure density}
\limsup_{r\to 0^+}\frac{\mu(B(x,r)\setminus\Omega)}{\mu(B(x,r))}>0
\end{equation}
for every $x\in \partial\Omega\cap U\setminus A$.
Let $U'\Subset U$.
Then there exists a sequence $(w_k)\subset \liploc(U)$ such that  for each $k\in\N$, $w_k=0$ in
$U'\setminus \Omega$, $w_k\to \ch_{\Omega}$ in $L^1(U)$ and
\[
\limsup_{k\to\infty}\int_{U} \Lip w_k\,d\mu\le C_{in} (P(\Omega,U)+\mathcal H(A))
\]
for a constant $C_{in}=C_{in}(C_d,C_P,\lambda)$.
\end{proposition}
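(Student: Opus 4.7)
The plan is to approximate $\ch_\Omega$ by discrete convolutions $v_i$ in $U$, use Mazur's lemma (Lemma~\ref{lem:using Mazurs lemma}) to upgrade the $\mathcal H$-a.e.\ $\limsup$ bound from \eqref{eq:pointwise convergence of discrete convolutions} to a uniform bound strictly below $1$ on a compact portion of $\overline{U'}\setminus\Omega$, multiply by a Lipschitz cutoff that annihilates a small ball cover of the exceptional set $A$ (together with the null set where \eqref{eq:pointwise convergence of discrete convolutions} fails), and finally truncate below the threshold to force the result to vanish on all of $U'\setminus \Omega$.

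Fix discrete convolutions $v_i\in\liploc(U)$ of $\ch_\Omega$ at scales $R_i\searrow 0$. By \eqref{eq:upper gradient of discrete convolution}--\eqref{eq:L1 approximation property of DC} one has $0\le v_i\le 1$, $\int_U\Lip v_i\,d\mu\le C_0 C_{\mathrm{lip}}P(\Omega,U)$, and $v_i-\ch_\Omega\to 0$ in $L^1(U)$; \eqref{eq:pointwise convergence of discrete convolutions} furnishes an $\mathcal H$-null exceptional set $\mathcal N\subset U$ such that
\[
\limsup_i v_i(y)\le \widetilde\gamma\,\ch_\Omega^\wedge(y)+(1-\widetilde\gamma)\,\ch_\Omega^\vee(y)\qquad\text{for every }y\in U\setminus\mathcal N.
\]
For each $k\in\N$, cover $A\cup\mathcal N$ by balls $B_m^k=B(y_m^k,r_m^k)$ with $r_m^k<1/k$ and $\sum_m\mu(B_m^k)/r_m^k\le \mathcal H(A)+1/k$, which is possible because $\mathcal H(A\cup\mathcal N)=\mathcal H(A)<\infty$; write $V_k:=\bigcup_m B_m^k$. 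Using a standard partition-of-unity construction at scale $r_m^k$ on each ball (after passing to a bounded-overlap refinement via a Besicovitch-type argument, to keep the sum locally finite), build $\eta_k\in\liploc(X)$ with $0\le \eta_k\le 1$, $\eta_k\equiv 0$ on $V_k$, $\eta_k\equiv 1$ outside $\bigcup_m 2B_m^k$, and $\Lip \eta_k\le C\sum_m (1/r_m^k)\ch_{2B_m^k}$ pointwise.

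On the compact set $K_k:=\overline{U'}\setminus(\Omega\cup V_k)$, every point $y$ lies outside $A\cup\mathcal N$ and satisfies either (i) $y\in\partial\Omega$, in which case \eqref{eq:exterior measure density} prevents $y$ from lying in the measure-theoretic interior $I_\Omega$, so $\ch_\Omega^\wedge(y)=0$, or (ii) $y\in X\setminus\overline\Omega$, where a small ball around $y$ misses $\Omega$ and hence $\ch_\Omega^\vee(y)=0$. Either way, \eqref{eq:pointwise convergence of discrete convolutions} forces $\limsup_i v_i(y)\le 1-\widetilde\gamma$. Choose $i_k\in\N$ so that $\|v_i-\ch_\Omega\|_{L^1(U)}<1/k$ whenever $i\ge i_k$, and apply Lemma~\ref{lem:using Mazurs lemma} to the tail $(v_i)_{i\ge i_k}\subset C(K_k)$ with $\alpha=1-\widetilde\gamma$ and $\eps=\widetilde\gamma/4$ to produce a convex combination $\widehat v^{\,k}=\sum_j\lambda_j^k v_{i_j^k}$ with each $i_j^k\ge i_k$ and $\widehat v^{\,k}\le 1-3\widetilde\gamma/4$ pointwise on $K_k$.

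Finally, let $T(s):=(2/\widetilde\gamma)\max(0,s-(1-\widetilde\gamma/2))$, a $(2/\widetilde\gamma)$-Lipschitz retraction that kills $[0,1-\widetilde\gamma/2]$ and fixes $1$, and set $w_k:=\eta_k\cdot T(\widehat v^{\,k})$. Then $T(\widehat v^{\,k})\equiv 0$ on $K_k$ and $\eta_k\equiv 0$ on $V_k$, and since $K_k\cup V_k\supset \overline{U'}\setminus\Omega\supset U'\setminus\Omega$, this gives $w_k\equiv 0$ on $U'\setminus\Omega$. The Leibniz rule together with \eqref{eq:subadditivity of local Lipschitz constants} yields
\[
\int_U\Lip w_k\,d\mu\le \frac{2}{\widetilde\gamma}\sum_j\lambda_j^k\int_U\Lip v_{i_j^k}\,d\mu+C\sum_m\frac{\mu(2B_m^k)}{r_m^k}\le \frac{2C_0 C_{\mathrm{lip}}}{\widetilde\gamma}P(\Omega,U)+CC_d(\mathcal H(A)+1/k),
\]
whose limsup is bounded by $C_{in}(P(\Omega,U)+\mathcal H(A))$ for a constant $C_{in}=C_{in}(C_d,C_P,\lambda)$. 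The $L^1(U)$ convergence $w_k\to\ch_\Omega$ follows from $\|\widehat v^{\,k}-\ch_\Omega\|_{L^1(U)}\le \max_j\|v_{i_j^k}-\ch_\Omega\|_{L^1(U)}<1/k$ (hence $\|T(\widehat v^{\,k})-\ch_\Omega\|_{L^1(U)}<2/(\widetilde\gamma k)$), combined with $\mu(\bigcup_m 2B_m^k)\le C_d(1/k)(\mathcal H(A)+1/k)\to 0$. The main obstacle is the pointwise upgrade of the almost-everywhere limsup bound on the compact set $K_k$: it requires simultaneously absorbing the measure-theoretically bad set $A$ and the non-quantitative $\mathcal H$-null set $\mathcal N$ into a single ball cover whose $\mathcal H$-content remains controlled by $\mathcal H(A)$.
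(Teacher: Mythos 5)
Your overall blueprint matches the paper's: discrete convolutions $v_i$ of $\ch_\Omega$, the pointwise bound from~\eqref{eq:pointwise convergence of discrete convolutions} combined with~\eqref{eq:exterior measure density} to get $\limsup_i v_i\le 1-\widetilde\gamma$ off the exceptional set, Mazur's lemma (Lemma~\ref{lem:using Mazurs lemma}) to upgrade to a uniform bound on a compact set, and a Lipschitz truncation to force vanishing. The point where you diverge from the paper is how you kill the approximants on the exceptional set $A\cup\mathcal N$: you construct a multiplicative cutoff $\eta_k$ that vanishes on a small-ball cover $V_k$ and claim $\eta_k\in\liploc(X)$ with $\Lip\eta_k\le C\sum_m(1/r_m^k)\ch_{2B_m^k}$ ``after passing to a bounded-overlap refinement via a Besicovitch-type argument.'' This is a genuine gap. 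First, the Besicovitch covering theorem is not available in general doubling metric measure spaces (only the $5r$-covering theorem is, and the dilated balls of pairwise disjoint balls of unequal radii need not have bounded overlap). Second, and more fundamentally, even with bounded overlap the proposed $\eta_k$ can fail to belong to $\liploc(X)$: the cover of $A\cup\mathcal N$ with small $\mathcal H$-content is in general a countable cover whose radii $r_m^k$ may accumulate to zero, and near any such accumulation point the pointwise bound $\sup_m(1/r_m^k)\ch_{2B_m^k}$ is unbounded, so $\eta_k$ (hence $w_k$) is not Lipschitz on compact neighborhoods of such points, which violates membership in $\liploc(U)$ as required by the statement and by Definition~\ref{def:p+}.

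The paper avoids this entirely by a different mechanism. Instead of a cutoff, it first replaces $\Omega$ by $\Omega\setminus\bigcup_j B_j$ (with radii $r_j$ chosen via~\cite[Lemma~6.2]{KKST1} so that $P(B_j,X)\lesssim\mu(B_j)/r_j$), estimates $P(\Omega\setminus\bigcup_j B_j,U)\le P(\Omega,U)+C\mathcal H(A)+C\eps$ by subadditivity of perimeter, and then takes discrete convolutions $\breve v_i$ of $\ch_{\Omega\setminus\bigcup_j B_j}$. These are automatically in $\liploc(U)$, their $L^1$ Lipschitz bound follows from~\eqref{eq:total variation of discrete convolution} applied to the modified set, they satisfy $\breve v_i\le v_i$ by monotonicity of discrete convolution, and they converge to $0$ at every point of $A$ by locality. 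Mazur and the truncation then apply directly to $\breve v_i$ on the fixed compact set $\overline{U'}\setminus\Omega$. If you want to keep your structure, you should replace the cutoff step by this device of discrete-convolving the modified indicator, rather than trying to build $\eta_k$ by hand.
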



\begin{remark}\label{rem:bad-example}
If $X=\R^2$ (unweighted) and $\Om$ is the slit disk
\[
\Om=\{(x,y)\in\R^2:\, x^2+y^2<1\}\setminus[-1,0]\times\{0\},
\]
and $U$ is the unit disk, we have $P(\Omega,U)=0$
and the set $A$ can be taken to be the slit. If we add countably many slits,
see Example~\ref{ex:countably many slits} below, we still have $P(\Omega,U)=0$ but
$\mathcal H(A)=\infty$ and the conclusion of the proposition becomes meaningless.
\end{remark}

\begin{proof}[Proof of Proposition~\ref{prop:approximation with Lip functions from inside}]
Let each $v_i$, $i\in\N$, be the discrete convolution of $\ch_{\Omega}$ with respect to a Whitney-type covering of $U$ at scale $1/i$. We can add to the set $A$ the $\mathcal H$-negligible set where~\eqref{eq:pointwise convergence of discrete convolutions} fails with $u=\ch_{\Omega}$.
Fix $\eps>0$.
We can pick balls $B(x_j,s_j)$ intersecting $A$ with $s_j\le \eps$,
\[
A\subset \bigcup_{j\in \N}B(x_j,s_j),
\]
and
\[
\sum_{j\in\N}\frac{\mu(B(x_j,s_j))}{s_j}\le \mathcal H(A)+\eps.
\]
Furthermore, we can choose radii $r_j\in [s_j,2s_j]$ such that
\[
P(B(x_j,r_j),X)\le C\frac{\mu(B(x_j,r_j))}{r_j}
\]
for each $j\in\N$, see~\cite[Lemma 6.2]{KKST1}.

For brevity, let us write $B_j:=B(x_j,r_j)$, $j\in\N$. Then by the subadditivity~\eqref{eq:Caccioppoli sets form an algebra} and the lower semicontinuity of perimeter, we have
\begin{equation}\label{eq:perimeter of modified Omega}
\begin{split}
P\left(\Omega\setminus \bigcup_{j\in\N}B_j,U\right)
&\le P(\Omega,U)+\sum_{j\in\N}P(B_j,X)\\
&\le P(\Omega,U)+C\sum_{j\in\N}\frac{\mu(B_j)}{r_j}\\
&\le P(\Omega,U)+C\mathcal H(A)+C\eps.
\end{split}
\end{equation}
Let each $\breve{v}_i$, $i\in\N$, be the discrete convolution of $\ch_{\Omega\setminus \bigcup_{j\in\N}B_j}$ with respect to the same Whitney-type covering of $U$ at scale $1/i$  used also in defining the functions $v_i$.
By the properties~\eqref{eq:total variation of discrete convolution}
and~\eqref{eq:L1 approximation property of DC} of discrete convolutions, we have
$\breve{v}_i- \ch_{\Omega\setminus \bigcup_{j\in\N}B_j}\to 0$ in $L^1(U)$ and
\begin{equation}\label{eq:first local Lipschitz constant estimate}
\Vert \Lip \breve{v}_i \Vert_{L^1(U)}\le C_0 C_{\textrm{lip}} P\left(\Omega\setminus \bigcup_{j\in\N}B_j,U\right)
\end{equation}
for each $i\in\N$.
Note that $\breve{v}_i(x)\le v_i(x)$ for every $x\in U$.
Thus for every $x\in \partial\Omega\cap U\setminus A$ we have by~\eqref{eq:pointwise convergence of discrete convolutions}
\begin{align*}
\limsup_{i\to\infty}\breve{v}_i(x)
\le \limsup_{i\to\infty} v_i(x)
\le \widetilde{\gamma} \ch_{\Omega}^{\wedge}(x)+ (1-\widetilde{\gamma})\ch_{\Omega}^{\vee}(x)
\le 1-\widetilde{\gamma};
\end{align*}
note that $\ch_{\Omega}^{\wedge}(x)=0$ for every $x\in \partial\Omega\cap U\setminus A$ by~\eqref{eq:exterior measure density}.
Moreover, $\lim_{i\to\infty}\breve{v}_i(x)=0$ for every $x\in A$,
since $\ch_{\Omega\setminus \bigcup_{j\in\N}B_j}=0$ in a neighborhood of every $x\in A$.
Note that $\overline{U'}\setminus \Omega$ is a compact set.
Using Lemma~\ref{lem:using Mazurs lemma}, we find for every $i\in\N$ a convex combination
of the functions $\{\breve{v}_k\}_{k=i}^{\infty}$, denoted by $\widehat{v}_i$, such that
\begin{equation}\label{eq:bound for convex combinations}
\widehat{v}_i(x)\le 1-\widetilde{\gamma}/2 \qquad\textrm{for every }x\in \overline{U'}\setminus \Omega.
\end{equation}
Clearly we still have $\widehat{v}_i\in \liploc(U)$ with
$\widehat{v}_i- \ch_{\Omega\setminus \bigcup_{j\in\N}B_j}\to 0$ in $L^1(U)$, and by~\eqref{eq:first local Lipschitz constant estimate} and the subadditivity~\eqref{eq:subadditivity of local Lipschitz constants},
\begin{equation}\label{eqn:Lipv}
\Vert \Lip \widehat{v}_i \Vert_{L^1(U)}\le C_0 C_{\textrm{lip}} P\left(\Omega\setminus \bigcup_{j\in\N}B_j,U\right).
\end{equation}
Next, let
\[
\breve{w}_i:=\frac{\max\{0,\widehat{v}_i-1+\widetilde{\gamma}/2\}}{\widetilde{\gamma}/2},\qquad i\in\N.
\]
Then by~\eqref{eq:bound for convex combinations}, $\breve{w}_i=0$ in $\overline{U'}\setminus \Omega$.
Again, we still have $\breve{w}_i\in \liploc(U)$ with $\breve{w}_i- \ch_{\Omega\setminus \bigcup_{j\in\N}B_j}\to 0$ in $L^1(U)$, and by~\eqref{eq:perimeter of modified Omega} and~\eqref{eqn:Lipv},
\begin{align*}
\Vert \Lip \breve{w}_i \Vert_{L^1(U)}
&\le 2\widetilde{\gamma}^{-1} C_0 C_{\textrm{lip}} P\left(\Omega\setminus \bigcup_{j\in\N}B_j,U\right)\\
&\le C (P(\Omega,U)+\mathcal H(A)+\eps).
\end{align*}
We can do the above for each $\eps=1/k$, $k\in\N$.
Denote $\Omega_k:=\Omega\setminus \bigcup_{j\in\N}B_j$, with the balls $B_j$ picked corresponding to the choice $\eps=1/k$.
Thus we obtain sequences $\breve{w}_{k,i}$ with $\breve{w}_{k,i}-\ch_{\Omega_k}\to 0$ in
$L^1(U)$ as $i\to\infty$.
Then for each $k\in\N$ we can pick a sufficiently large $i_k\ge k$ such that
\[
\Vert \breve{w}_{k,i_k}-\ch_{\Omega_k}\Vert_{L^1(U)}\le 1/k,
\]
\[
\Vert \Lip \breve{w}_{k,i_k} \Vert_{L^1(U)}\le C ( P(\Omega,U)+\mathcal H(A)+1/k),
\]
and $\breve{w}_{k,i_k}=0$ in $U'\setminus \Omega$.
Since furthermore $\ch_{\Omega_k}- \ch_{\Omega}\to 0$ in $L^1(U)$ as $k\to\infty$, we have
$ \breve{w}_{k,i_k}- \ch_{\Omega}\to 0$ in $L^1(U)$. Finally, we can define $w_k:=\breve{w}_{k,i_k}$, $k\in\N$.
\end{proof}

Note that we always have $P(\Omega,U)\le P_+(\Omega,U)$, since the definition of the latter involves 
a more restricted class of approximating functions. Now  we can show the following.

\begin{theorem}\label{thm:P+vsP}
Let $\Omega\subset X$ be a bounded open set with $P(\Omega,X)<\infty$, and suppose that
\[
\limsup_{r\to 0^+}\frac{\mu(B(x,r)\setminus\Omega)}{\mu(B(x,r))}>0
\]
for $\mathcal H$-a.e. $x\in \partial\Omega$.
Then $P_+(\Om,X)<\infty$, and
for any open set $U\subset X$, we have
$P(\Om, U)\le P_+(\Omega,U)\le C P(\Omega,U)$.
\end{theorem}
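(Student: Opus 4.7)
The inequality $P(\Omega,U)\le P_+(\Omega,U)$ is immediate, since the infimum defining $P_+$ is taken over a strictly more restrictive class of admissible functions. For the reverse, the plan is to first establish $P_+(\Omega,X)<\infty$ by combining Proposition~\ref{prop:approximation with Lip functions from inside} with a Lipschitz cutoff, then use the Radon measure property of $P_+(\Omega,\cdot)$ from the Appendix to pass to a general open $U$ via restriction and exhaustion.

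For the first step, since $\overline{\Omega}$ is compact (as $X$ is proper and $\Omega$ is bounded), I pick open sets $V_0\Subset V\Subset X$ with $\overline{\Omega}\subset V_0$, and let $A$ be the $\mathcal{H}$-null exceptional set in $\partial\Omega$ where exterior measure density fails. Applying Proposition~\ref{prop:approximation with Lip functions from inside} to $(\Omega,X)$ with this $A$ and $U'=V$ gives $(w_k)\subset\liploc(X)$ with $w_k=0$ on $V\setminus\Omega$, $w_k\to\ch_\Omega$ in $L^1(X)$, and $\limsup_k\int_X\Lip w_k\,d\mu\le C_{in}P(\Omega,X)$. Fix a Lipschitz cutoff $\eta$ with $0\le\eta\le 1$, $\eta=1$ on $V_0$ and $\eta=0$ on $X\setminus V$, and set $\Psi_k:=\eta w_k\in\liploc(X)$. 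Since $\Omega\subset V_0\subset V$, we have $X\setminus\Omega=(V\setminus\Omega)\cup(X\setminus V)$, on each piece of which $\Psi_k$ vanishes; moreover $\Psi_k\to\ch_\Omega$ in $L^1(X)$ because $\eta\equiv 1$ on $\Omega$. The crucial point is that $\Lip\eta$ is supported in $\overline{V}\setminus V_0\subset\overline{V\setminus\Omega}$, where $w_k$ vanishes by continuity; hence by the Leibniz rule $\Lip\Psi_k\le\eta\Lip w_k+w_k\Lip\eta$, the product estimate gives $\int_X\Lip\Psi_k\,d\mu\le\int_X\Lip w_k\,d\mu$, yielding $P_+(\Omega,X)\le C_{in}P(\Omega,X)<\infty$.

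Once the Appendix tells us $P_+(\Omega,\cdot)$ is a Radon measure on $X$, the bound for general open $U$ (which I may assume satisfies $P(\Omega,U)<\infty$) follows by a simpler restriction-exhaustion argument. For any open $V\Subset U$, apply Proposition~\ref{prop:approximation with Lip functions from inside} to $(\Omega,U)$ with $A\cap U$ and $U'=V$ to get $(w_k)\subset\liploc(U)$ with $w_k=0$ on $V\setminus\Omega$, $w_k\to\ch_\Omega$ in $L^1(U)$, and $\limsup_k\int_U\Lip w_k\,d\mu\le C_{in}P(\Omega,U)$. The restrictions $w_k|_V\in\liploc(V)$ are admissible for the infimum defining $P_+(\Omega,V)$, giving $P_+(\Omega,V)\le C_{in}P(\Omega,U)$. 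Exhausting $U$ by open sets $V_j\Subset V_{j+1}\Subset U$ (possible by properness of $X$) and invoking continuity from below of the Radon measure $P_+(\Omega,\cdot)$ yields $P_+(\Omega,U)=\lim_j P_+(\Omega,V_j)\le C_{in}P(\Omega,U)$.

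The main obstacle throughout is bridging the gap between the conclusion of Proposition~\ref{prop:approximation with Lip functions from inside}, where the approximations vanish only on $U'\setminus\Omega$, and the vanishing-on-all-of-$U\setminus\Omega$ requirement in Definition~\ref{def:p+}. The cutoff trick closes this gap when $U=X$ by using the compactness of $\overline{\Omega}$ to insert a shell where $w_k$ already vanishes; the delicate point is that the support of $\Lip\eta$ sits inside this shell, so multiplication by $\eta$ does not increase the total variation at all. For general $U$ one cannot necessarily fit $\overline{\Omega}$ inside a compactly contained subset of $U$, and this is circumvented by restriction to $V\Subset U$ combined with the Radon-measure exhaustion, which effectively reduces the problem to compactly contained subsets.
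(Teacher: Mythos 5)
Your proof is correct and follows essentially the same route as the paper: both rely on Proposition~\ref{prop:approximation with Lip functions from inside} together with the Radon-measure property of $P_+(\Omega,\cdot)$ and inner regularity to pass from $U'\Subset U$ to general open $U$. The only genuine variation is the cutoff trick $\Psi_k := \eta w_k$ to force vanishing on all of $X\setminus\Omega$; the paper instead observes that the approximants from the proposition's proof already vanish on $X\setminus\Omega$, so your argument has the small advantage of using the proposition purely as a black box.
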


\begin{proof}
Take a bounded open set $U'\subset X$ with $\Omega\Subset U'$.
Let $w_k\in\liploc(X)$ be the sequence 
given by Proposition~\ref{prop:approximation with Lip functions from inside} 
with the choice $U=X$ (note that now $\mathcal H(A)=0$).
Then $w_k=0$ in $U'\setminus \Omega$, and in fact from the proof of 
Proposition~\ref{prop:approximation with Lip functions from inside} it is 
easy to see that $w_k=0$ in $X\setminus \Omega$.
Then by the definition of $P_+(\Om,\cdot)$ and the fact that the local Lipschitz constant is an upper gradient,
\begin{align*}
P_+(\Omega,X)
\le \liminf_{k\to\infty} \int_{X}g_{w_k}\,d\mu
\le \liminf_{k\to\infty} \int_{X}\Lip w_k\,d\mu
\le C P(\Omega,X)<\infty.
\end{align*}
Therefore $P_+(\Omega,\cdot)$ is a Radon measure on $X$, see Appendix.
For an open set $U\subset X$, the first inequality of the second claim is clear.
To prove the second inequality, fix $\eps>0$. For some 
$U'\Subset U$ we have $P_+(\Omega,U)\le P_+(\Omega,U')+\eps$.
Let $(w_k)\subset\liploc(U)$ be the sequence 
given by Proposition~\ref{prop:approximation with Lip functions from inside}.
Then
\begin{align*}
P_+(\Omega,U)\le P_+(\Omega,U')+\eps
&\le \liminf_{k\to\infty} \int_{U'}g_{w_k}\,d\mu+\eps\\
&\le \liminf_{k\to\infty} \int_{U'}\Lip w_k\,d\mu+\eps\\
&\le C P(\Omega,U)+\eps.
\end{align*}
By letting $\eps\to 0$, we obtain the result.
\end{proof}

To conclude this section, we prove two corollaries of 
Proposition~\ref{prop:approximation with Lip functions from inside} that will not be needed in the 
sequel, but may be of independent interest. First we need a lemma.

\begin{lemma}\label{lem:coarea inequality}
For any $w\in\Lip_c(X)$,
\[
\int_{-\infty}^{\infty}\mathcal H(\partial\{w>t\})\,dt\le C_{co}\int_{X}\Lip w\,d\mu,
\]
where $C_{co}$ only depends on the doubling constant of the measure.
\end{lemma}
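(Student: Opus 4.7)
The plan is to prove this as a metric-space version of Eilenberg's inequality. First, since $w\in\Lipc(X)$ is continuous, the open set $\{w>t\}$ has topological boundary contained in $\{w=t\}$; using that $w\equiv 0$ outside $\spt(w)$, one checks $\partial\{w>t\}\subset\spt(w)\cap\{w=t\}$ for every $t\in\R$. Thus it suffices to prove the Eilenberg-type inequality
\[
\int_{-\infty}^{\infty}\mathcal H(\spt(w)\cap\{w=t\})\, dt\le C\int_X \Lip w\, d\mu.
\]

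To this end I would fix $\eps>0$ and build a Vitali covering by balls centered at ``good'' points. Since $w$ is globally Lipschitz, $\Lip w\in L^\infty(X)\subset L^1_{\loc}(X)$, so by Lebesgue differentiation $\mu$-a.e.\ $x$ is a Lebesgue point of $\Lip w$; combined with the defining property of $\Lip w(x)$, for each such $x$ there is a threshold $r(x)>0$ such that for every $0<r\le r(x)$,
\[
\osc(w,B(x,r))\le 2(\Lip w(x)+\eps)r\quad\text{and}\quad\Lip w(x)\le\vint_{B(x,r)}\Lip w\, d\mu+\eps.
\]
Call the resulting full-measure set $A$ and stratify by $A_m:=\{x\in A:r(x)\ge 1/m\}$, so that $\mu(\spt(w)\setminus A_m)\to 0$. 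Fixing $\delta>0$ and $m$ large enough that $\mu(\spt(w)\setminus A_m)<\delta$, by the $5r$-covering theorem I would extract a pairwise disjoint family $\{B_i=B(x_i,r_i)\}$ with $x_i\in A_m$, $r_i\le 1/(5m)$, and $A_m\subset\bigcup_i 5B_i$. Then I would cover the residual $\spt(w)\setminus\bigcup_i 5B_i\subset\spt(w)\setminus A_m$ by balls $\{C_j\}$ of radius at most $1/m$ with $\sum_j\mu(C_j)<2\delta$, using outer regularity of $\mu$.

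The heart of the argument integrates the elementary level-set bound
\[
\mathcal H_{1/m}(\spt(w)\cap\{w=t\})\le\sum_{i:\,t\in w(5B_i)}\frac{\mu(5B_i)}{5r_i}+\sum_{j:\,t\in w(C_j)}\frac{\mu(C_j)}{\mathrm{rad}(C_j)}
\]
in $t$ via Fubini. For each $i$, $\int_\R\ch_{w(5B_i)}(t)\, dt=\mathcal L^1(w(5B_i))\le\osc(w,5B_i)\le 10(\Lip w(x_i)+\eps)r_i$ (since $5r_i\le 1/m\le r(x_i)$), and for each $j$, $\mathcal L^1(w(C_j))\le 2L\,\mathrm{rad}(C_j)$ with $L$ the global Lipschitz constant of $w$. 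Invoking doubling $\mu(5B_i)\le C_d^3\mu(B_i)$, disjointness of the $B_i$, and the Lebesgue estimate $\sum_i\mu(B_i)\Lip w(x_i)\le\int_X\Lip w\, d\mu+\eps\mu(\spt(w))$ produces
\[
\int_{-\infty}^{\infty}\mathcal H_{1/m}(\spt(w)\cap\{w=t\})\, dt\le 2C_d^3\int_X\Lip w\, d\mu+C\eps\mu(\spt(w))+4L\delta.
\]
Since this bound is independent of $m$, sending $m\to\infty$ and using monotone convergence ($\mathcal H_{1/m}\uparrow \mathcal H$), then letting $\eps,\delta\to 0$, completes the argument.

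The main obstacle is handling the residual set $\spt(w)\setminus A_m$: although its $\mu$-measure is small, a $\mu$-null set may have arbitrarily large codimension-$1$ Hausdorff measure, so its contribution to $\mathcal H_{1/m}$ at a single $t$ cannot be bounded directly. The trick is that integrating over $t$ first converts each $C_j$'s contribution into $\mu(C_j)\,\mathcal L^1(w(C_j))/\mathrm{rad}(C_j)\le 2L\mu(C_j)$, making the total residual error $\le 4L\delta$, which vanishes as $\delta\to 0$.
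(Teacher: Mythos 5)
Your argument is correct and takes a genuinely different, more self-contained route than the paper. The paper disposes of this lemma in two lines by citation: it invokes the Minkowski-content coarea inequality $\int_\R \mu^+(\partial\{w>t\})\,dt\le\int_X\Lip w\,d\mu$ from~\cite[Proposition 3.5]{KoLa} (itself going back to Bobkov--Houdr\'e), and then the comparison $\mathcal H(A)\le C_d^3\mu^+(A)$ from~\cite[Proposition 3.12]{KoLa}; the constant $C_{co}=C_d^3$ thus comes entirely from the Hausdorff--Minkowski comparison, with all the analytic work delegated to the reference. You instead prove an Eilenberg-type inequality from scratch, using the $5r$-covering theorem, Lebesgue differentiation of $\Lip w$, and Tonelli, arriving at $C_{co}=2C_d^3$. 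The point you flag at the end --- that a $\mu$-small residual set could a priori have enormous codimension-$1$ Hausdorff content at a single level, but that integrating in $t$ converts each residual ball $C_j$'s contribution to $\mathcal L^1(w(C_j))\,\mu(C_j)/\mathrm{rad}(C_j)\le 2L\,\mu(C_j)$ --- is precisely the observation that makes the Eilenberg approach go through without any differentiation hypothesis on the residual. This trades two citations for a longer but elementary and transparent proof, which some readers may prefer.

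Two small points to tidy up, neither of which affects the substance. First, to keep $\sum_i\mu(B_i)$ finite, apply the $5r$-covering to $A_m\cap\spt(w)$ rather than all of $A_m$ (or simply discard the balls $B_i$ with $5B_i\cap\spt(w)=\emptyset$, which contribute nothing since $\mathcal L^1(w(5B_i))=0$ there); then $\sum_i\mu(B_i)\le\mu(\{x:\dist(x,\spt(w))\le 1\})<\infty$ uniformly in $m$, so the $\eps$-error term is genuinely $O(\eps)$. Second, the monotone-convergence step as $m\to\infty$ tacitly requires $t\mapsto\mathcal H_{1/m}(\spt(w)\cap\{w=t\})$ to be Lebesgue measurable; this is standard (and one can also sidestep it by bounding the outer integral of the left-hand side by the honest integral of the measurable dominating sum you wrote down), but it is worth saying explicitly.
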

\begin{proof}
By~\cite[Proposition 3.5]{KoLa} (which is based on~\cite{BH}) the following coarea inequality holds:
for any $w\in\Lip_c(X)$,
\[
\int_{-\infty}^{\infty}\mu^+(\partial\{w>t\})\,dt\le \int_{X}\Lip w\,d\mu.
\]
Since $\mathcal H(A)\le C_d^3\mu^+(A)$ for any $A\subset X$ (see e.g.~\cite[Proposition 3.12]{KoLa}), we obtain the result.
\end{proof}


\begin{corollary}\label{cor:approximation finite perimeter from inside}
Let $\Omega\subset X$ be a bounded open set with $P(\Omega,X)<\infty$, and suppose that there exists
$A\subset \partial\Omega$ with $\mathcal H(A)<\infty$ such that
\[
\limsup_{r\to 0^+}\frac{\mu(B(x,r)\setminus\Omega)}{\mu(B(x,r))}>0
\]
for every $x\in \partial\Omega\setminus A$. Then there exists a sequence of open sets
$\Omega_j\Subset \Omega$ with $\ch_{\Om_j}(x)\to 1$ for every $x\in\Om$ and
\[
\mathcal H(\partial \Omega_j)\le C(P(\Omega,X)+\mathcal H(A))
\]
for each $j\in\N$.
\end{corollary}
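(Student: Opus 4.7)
The plan is to extract the sets $\Omega_j$ as superlevel sets of Lipschitz approximations of $\ch_\Omega$ furnished by Proposition~\ref{prop:approximation with Lip functions from inside}, using the coarea inequality of Lemma~\ref{lem:coarea inequality} to control $\mathcal H(\partial\Omega_j)$.

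First, I would invoke Proposition~\ref{prop:approximation with Lip functions from inside} with the choice $U=X$ and some bounded open $U'$ satisfying $\overline{\Omega}\subset U'\Subset X$ (possible since $\Omega$ is bounded and $X$ is proper). This produces $(w_k)\subset\liploc(X)$ with $w_k\to\ch_\Omega$ in $L^1(X)$ and
\[
\limsup_{k\to\infty}\int_X \Lip w_k\,d\mu\le C_{in}\bigl(P(\Omega,X)+\mathcal H(A)\bigr).
\]
As observed in the proof of Theorem~\ref{thm:P+vsP}, the construction in fact yields $w_k\equiv 0$ on $X\setminus\Omega$, so $w_k\in\Lip_c(X)$ and $0\le w_k\le 1$.

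Second, I would upgrade this to \emph{pointwise} convergence $w_k(y)\to 1$ for every $y\in\Omega$ --- indeed $w_k(y)=1$ for all $k$ large --- which is stronger than the assertion of Proposition~\ref{prop:approximation with Lip functions from inside}. The key remark is that since $A\subset\partial\Omega$ is closed and $\Omega$ is open, $\dist(y,A)>0$ for every $y\in\Omega$. Unwinding the proof of Proposition~\ref{prop:approximation with Lip functions from inside}, the excluded balls $B_j^{(k)}$ chosen at the stage $\eps=1/k$ lie inside the $(3/k)$-neighborhood of $A$; hence for $k$ sufficiently large, $y$ is at uniformly positive distance both from $\partial\Omega$ and from $\bigcup_j B_j^{(k)}$. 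Once the Whitney scale $1/i$ is small enough, the discrete convolution $\breve v_i$ of $\ch_{\Omega\setminus\bigcup_j B_j^{(k)}}$ therefore equals $1$ at $y$; Mazur-type convex combinations preserve this value, and so does the thresholding defining $\breve w_{k,i}$. Since the indices $i_k\ge k$ chosen in the proposition tend to $\infty$, we conclude that $w_k(y)=\breve w_{k,i_k}(y)=1$ for all $k$ large.

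Third, I would apply Lemma~\ref{lem:coarea inequality} to each $w_k\in\Lip_c(X)$. Since $0\le w_k\le 1$,
\[
\int_0^{1/2}\mathcal H\bigl(\partial\{w_k>t\}\bigr)\,dt\le \int_{-\infty}^\infty \mathcal H\bigl(\partial\{w_k>t\}\bigr)\,dt\le C_{co}\int_X\Lip w_k\,d\mu,
\]
so by Chebyshev's inequality there exists $t_k\in(0,1/2)$ with $\mathcal H(\partial\{w_k>t_k\})\le 2C_{co}\int_X\Lip w_k\,d\mu$. Fixing $k_0$ so that $\int_X\Lip w_k\,d\mu\le 2 C_{in}(P(\Omega,X)+\mathcal H(A))$ for all $k\ge k_0$, I set $\Omega_j:=\{w_{k_0+j}>t_{k_0+j}\}$. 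These are open by continuity, and because $w_{k_0+j}$ vanishes on $X\setminus\Omega$ and hence (by continuity) on $\partial\Omega$, we get $\overline{\Omega_j}\subset\{w_{k_0+j}\ge t_{k_0+j}\}\subset\Omega$; boundedness of $\Omega$ and properness of $X$ then give $\Omega_j\Subset\Omega$. The $\mathcal H$-bound on $\partial\Omega_j$ is built in, and $\ch_{\Omega_j}(y)\to 1$ for every $y\in\Omega$ follows from $w_{k_0+j}(y)\to 1>1/2>t_{k_0+j}$.

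The principal obstacle is the pointwise-convergence step: the statement of Proposition~\ref{prop:approximation with Lip functions from inside} only guarantees $L^1$ convergence, so one must re-enter its proof to verify that the approximations are eventually equal to $1$ at every point of $\Omega$, exploiting that $\overline A\cap\Omega=\emptyset$ and that the excluded balls shrink toward $A$.
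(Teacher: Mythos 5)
Your proof is correct and follows essentially the same route as the paper: invoke Proposition~\ref{prop:approximation with Lip functions from inside} with $U=X$ to produce $w_k\in\Lip_c(X)$ vanishing off $\Omega$ with controlled $\int_X\Lip w_k$, apply the coarea inequality of Lemma~\ref{lem:coarea inequality}, and take superlevel sets. The only (harmless) variation is in the threshold selection: the paper fixes a single $t\in(0,1)$ and passes to a subsequence $k_j$ with $t\in T_{k_j}$, whereas you let $t_k$ vary in $(0,1/2)$ and directly conclude via $w_k(y)\to 1>1/2>t_k$ — a slight simplification that avoids the subsequence argument.
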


\begin{proof}
Choose an open set $U'$ with $\Omega\Subset U'\Subset X$. Apply Proposition~\ref{prop:approximation with Lip functions from inside} with $U=X$
to obtain a sequence $w_k\in \Lip_{\loc}(X)$ with $w_k\to \ch_{\Omega}$ in $L^1(X)$,
\[
\int_X \Lip w_k \,d\mu \le C_{in}(P(\Omega,X)+\mathcal H(A)),
\]
and $w_k=0$ in $U'\setminus \Omega$.
From the proof of Proposition~\ref{prop:approximation with Lip functions from inside} it is easy to see that in fact $w_k=0$ in $X\setminus \Omega$, so that $w_k\in \Lip_{c}(X)$ for each $k\in\N$.
From the proof of Proposition~\ref{prop:approximation with Lip functions from inside}
we can also see that $w_k(x)\to 1$ for every $x\in\Omega$, so that
for any $t\in (0,1)$,
$\ch_{\{w_k>t\}}(x)\to 1$
for every $x\in\Omega$.
By Lemma~\ref{lem:coarea inequality},
\[
\int_0^1\mathcal H(\partial\{w_k>t\})\,dt\le C_{co}\int_X \Lip w_k\,d\mu\le C_{co} C_{in}(P(\Omega,X)+\mathcal H(A))
\]
for all $k\in\N$.
Thus for any fixed $k\in\N$ we find a set $T_k\subset (0,1)$ with $\mathcal L^1(T_k)\ge 1/2$ such that
for all $t\in T_k$,
\[
\mathcal H(\partial\{w_k>t\})\le 2C_{co}C_{in}(P(\Omega,X)+\mathcal H(A)).
\]
Now, if for every $t\in (0,1)$ there were an index $N_t\in\N$ such that $t\notin T_k$ for all $k\ge N_t$, then by the Lebesgue dominated convergence theorem we would have
\[
\int_0^1 \ch_{T_k}\,d\mathcal L^1 \to 0,
\]
which is a contradiction. Thus there exists $t\in (0,1)$ such that for some subsequence
$k_j$, we have $t\in T_{k_j}$ for all $j\in\N$.

Thus we can define $\Omega_j:=\{w_{k_j}>t\}$.
\end{proof}

We know the following fact about the extension of sets of finite perimeter: if $\Omega\subset X$ is an open set with $\mathcal H(\partial\Omega)<\infty$ and $E\subset \Omega$ is a $\mu$-measurable set with $P(E,\Omega)<\infty$, then $P(E,X)<\infty$ and in fact
\begin{equation}\label{eq:finite topological boundary result}
P(E,X)\le P(E,\Omega)+C\mathcal H(\partial\Omega),
\end{equation}
see~\cite[Proposition 6.3]{KKST1}. Now we can show a partially more general result.

\begin{corollary}
Let $\Omega\subset X$ be a bounded open set with $P(\Omega,X)<\infty$, and suppose that there exists
$A\subset \partial\Omega$ with $\mathcal H(A)<\infty$ such that
\[
\limsup_{r\to 0^+}\frac{\mu(B(x,r)\setminus\Omega)}{\mu(B(x,r))}>0
\]
for every $x\in \partial\Omega\setminus A$.
Let $E\subset \Omega$ be a $\mu$-measurable set with $P(E,\Omega)<\infty$. Then $P(E,X)<\infty$.
\end{corollary}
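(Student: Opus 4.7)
The plan is to combine the interior approximation from Corollary~\ref{cor:approximation finite perimeter from inside} with the extension estimate~\eqref{eq:finite topological boundary result} and pass to a limit using lower semicontinuity.

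First I would apply Corollary~\ref{cor:approximation finite perimeter from inside} to obtain a sequence of open sets $\Omega_j\Subset\Omega$ with $\ch_{\Omega_j}\to 1$ pointwise on $\Omega$ and
\[
\mathcal H(\partial\Omega_j)\le C(P(\Omega,X)+\mathcal H(A))
\]
for every $j\in\N$, with a constant $C$ independent of $j$. Note that the hypotheses on $\Om$ and $A$ in the present statement are exactly what that corollary requires.

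Next, set $E_j:=E\cap\Omega_j$. Since $\Omega_j\subset\Omega$ is open and $\ch_{E_j}=\ch_E$ inside $\Omega_j$, the total variation of $\ch_{E_j}$ and of $\ch_E$ agree on $\Omega_j$, so
\[
P(E_j,\Omega_j)=P(E,\Omega_j)\le P(E,\Omega)<\infty.
\]
Because $\Omega_j$ is a bounded open set with $\mathcal H(\partial\Omega_j)<\infty$, I can apply the extension result~\eqref{eq:finite topological boundary result} (from~\cite[Proposition~6.3]{KKST1}) to $E_j\subset\Omega_j$ to get
\[
P(E_j,X)\le P(E_j,\Omega_j)+C\mathcal H(\partial\Omega_j)\le P(E,\Omega)+C\cdot C(P(\Omega,X)+\mathcal H(A)),
\]
uniformly in $j$.

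Finally I would pass to the limit. Since $E\subset\Omega$ and $\Omega_j\subset\Omega$, we have $\ch_{E_j}=\ch_E\cdot\ch_{\Omega_j}$, and this converges pointwise to $\ch_E$ on all of $X$. The bound $|\ch_{E_j}|\le\ch_\Omega$ together with $\mu(\Omega)<\infty$ (recall $\Omega$ is bounded and $X$ is proper) gives $\ch_{E_j}\to\ch_E$ in $L^1(X)$ by dominated convergence. Lower semicontinuity of the total variation with respect to $L^1$-convergence then yields
\[
P(E,X)\le\liminf_{j\to\infty}P(E_j,X)\le P(E,\Omega)+C(P(\Omega,X)+\mathcal H(A))<\infty,
\]
which is the desired conclusion. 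The main engine of the argument is Corollary~\ref{cor:approximation finite perimeter from inside}; once one has the interior approximations with a uniform Hausdorff-measure bound on their boundaries, the rest is a clean application of the known extension theorem plus lower semicontinuity, so there is no serious obstacle beyond invoking the already established machinery.
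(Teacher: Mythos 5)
Your argument is correct and is essentially the paper's own proof: both pass through the interior approximations $\Omega_j$ from Corollary~\ref{cor:approximation finite perimeter from inside}, apply the extension estimate~\eqref{eq:finite topological boundary result} to $E\cap\Omega_j\subset\Omega_j$, and conclude by lower semicontinuity after noting $\ch_{E\cap\Omega_j}\to\ch_E$ in $L^1(X)$ (via dominated convergence and boundedness of $\Omega$). You merely spell out the intermediate identification $P(E\cap\Omega_j,\Omega_j)=P(E,\Omega_j)$ and the $L^1$ convergence in a bit more detail than the paper does.
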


\begin{proof}
Take the sequence of sets $\Omega_j\Subset \Omega$ given by Corollary~\ref{cor:approximation finite perimeter from inside}.
By Lebesgue's dominated convergence theorem, we have $\mu(\Om\setminus \Om_j)\to 0$,
and so
by the lower semicontinuity of perimeter and~\eqref{eq:finite topological boundary result},  we have
\begin{align*}
P(E,X)
&\le \liminf_{j\to\infty}P(E\cap\Omega_j,X)\\
&\le \liminf_{j\to\infty}(P(E,\Omega_j)+C\mathcal H(\partial \Omega_j))\\
&\le P(E,\Omega)+C(P(\Omega,X)+\mathcal H(A))<\infty.
\end{align*}
\end{proof}

\begin{example}\label{ex:countably many slits}
Without the requirement of the measure density condition for $X\setminus\Om$ given in the hypothesis 
of the above corollary, the conclusion of the corollary fails. For example, with $\mathbb{D}\subset\R^2$ the unit disk
in $X=\R^2=\mathbb{C}$ centered at $0$, set $\theta_n=\sum_{j=1}^n\frac{\pi}{2^j}$ and let
\[
\Om:=\mathbb{D}\setminus\{z\in\mathbb{C} :\, \Arg(z)=\theta_n\text{ for some }n\in\N\}.
\]
Then $P(\Om,X)=P(\mathbb{D},\R^2)<\infty$. Now with
\[
E=\bigcup_{n\in\N}\{z\in\mathbb{D} :\, \theta_{2n}<\Arg(z)<\theta_{2n+1}\},
\]
we see that $P(E,\Om)=0$, but as $\mathcal{H}(\partial^*E)=\infty$ (note that
$\mathcal H$ is now comparable to the one-dimensional Hausdorff measure), it follows that
$P(E,X)=P(E,\R^2)=\infty$.
\end{example}

\section{Dirichlet problem (T): trace definition}\label{sec:Dirichlet problem trace}


In this section we consider the Dirichlet problem (T) given in Definition~\ref{def:dirichlet}.
We show that the limit of $p$-harmonic functions with boundary data $f$ is a solution to this problem.
%

In the Euclidean setting, it is known that if a bounded domain $\Omega$ has a
Lipschitz boundary, the trace operator $T_+\colon \BV(\Omega)\to L^1(\partial\Omega,\mathcal H)$
is continuous under \emph{strict convergence}, see e.g.~\cite[Theorem 3.88]{AFP}.
In the following proposition we give a generalization of this fact to the metric setting.

\begin{proposition}\label{lemma:use-1}
Let $\Omega\subset X$ be an open set such that the trace operator
$T_+\colon\BV(\Omega)\to L^1(\partial\Omega,\mathcal H)$ is linear and bounded.
Let $u\in \BV(\Om)$, and let $u_k\in\BV(\Omega)$, $k\in\N$, such that
\[
u_k\to u\ \ \textrm{in}\ L^1(\Om)\quad \textrm{and}\quad \Vert Du_k\Vert(\Omega)\to \Vert Du\Vert(\Omega).
\]
Then $T_+ u_k\to T_+u$ in $L^1(\partial\Omega,\mathcal H)$.
\end{proposition}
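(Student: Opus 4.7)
The plan is to use the linearity of $T_+$ to reduce to showing that the error $v_k:=u_k-u$, which tends to $0$ in $L^1(\Omega)$ but whose variation $\|Dv_k\|(\Omega)$ remains merely bounded, satisfies $\|T_+v_k\|_{L^1(\partial\Omega,\mathcal H)}\to 0$. The boundedness of $T_+$ alone gives only $\|T_+v_k\|_{L^1}\le C\|v_k\|_{\BV(\Omega)}$, which is not enough, so the key is to localize the trace bound to a shrinking neighborhood of $\partial\Omega$ and exploit the strict convergence precisely there.

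For each $\delta>0$ I would pick a Lipschitz cutoff $\eta_\delta:X\to[0,1]$ with $\eta_\delta\equiv 1$ on $\{x:\dist(x,\partial\Omega)<\delta/2\}$, $\eta_\delta\equiv 0$ on $\{x:\dist(x,\partial\Omega)>\delta\}$, and $g_{\eta_\delta}\le 2/\delta$, and I would set $V_\delta:=\{x\in\Omega:\dist(x,\partial\Omega)\le\delta\}$, which contains $\{\eta_\delta>0\}\cap\Omega$ and $\{g_{\eta_\delta}>0\}\cap\Omega$. Since $\eta_\delta$ is continuous on $X$ and equals $1$ on $\partial\Omega$, a direct calculation with the averaged integrals in Definition~\ref{def:Madayan-traces} gives $T_+(\eta_\delta w)=T_+w$ $\mathcal H$-a.e.\ on $\partial\Omega$ for every $w\in\BV(\Omega)$. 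Combining this identity with the boundedness of $T_+$ and with the BV--Lipschitz Leibniz estimate
\[
\|D(\eta_\delta w)\|(\Omega)\le \int_\Omega \eta_\delta\,d\|Dw\|+\int_\Omega|w|\,g_{\eta_\delta}\,d\mu\le \|Dw\|(V_\delta)+\tfrac{2}{\delta}\|w\|_{L^1(V_\delta)},
\]
obtained by approximating $w$ via Lemma~\ref{lem:L1 loc and L1 convergence} and applying the Lipschitz Leibniz rule~\cite[Lemma~2.18]{BB}, yields
\[
\|T_+w\|_{L^1(\partial\Omega,\mathcal H)}=\|T_+(\eta_\delta w)\|_{L^1(\partial\Omega,\mathcal H)}\le C\|\eta_\delta w\|_{\BV(\Omega)}\le C(1+\tfrac{2}{\delta})\|w\|_{L^1(V_\delta)}+C\|Dw\|(V_\delta).
\]

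Applying this with $w=v_k$ and using $\|Dv_k\|(V_\delta)\le\|Du_k\|(V_\delta)+\|Du\|(V_\delta)$ gives
\[
\|T_+v_k\|_{L^1(\partial\Omega,\mathcal H)}\le C(1+\tfrac{2}{\delta})\|v_k\|_{L^1(\Omega)}+C\|Du_k\|(V_\delta)+C\|Du\|(V_\delta).
\]
For each fixed $\delta$, the first term vanishes as $k\to\infty$, so the crucial point is to bound $\limsup_k\|Du_k\|(V_\delta)$. Since $W_\delta:=\Omega\setminus V_\delta=\{x\in\Omega:\dist(x,\partial\Omega)>\delta\}$ is open in $\Omega$, lower semicontinuity of the total variation gives $\liminf_k\|Du_k\|(W_\delta)\ge\|Du\|(W_\delta)$; combined with the strict convergence $\|Du_k\|(\Omega)\to\|Du\|(\Omega)$ this produces
\[
\limsup_{k\to\infty}\|Du_k\|(V_\delta)=\lim_k\|Du_k\|(\Omega)-\liminf_k\|Du_k\|(W_\delta)\le \|Du\|(V_\delta),
\]
and hence $\limsup_k\|T_+v_k\|_{L^1(\partial\Omega,\mathcal H)}\le 2C\|Du\|(V_\delta)$.

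Finally, $\bigcap_{\delta>0}V_\delta=\emptyset$ because $\Omega$ is open and every point of $\Omega$ has positive distance to $\partial\Omega$, so continuity from above of the finite Radon measure $\|Du\|$ on $\Omega$ gives $\|Du\|(V_\delta)\to 0$ as $\delta\to 0^+$, completing the argument. The main obstacle is justifying the localized BV--Lipschitz Leibniz bound that concentrates the variation of $\eta_\delta w$ on $V_\delta$; once that is in hand, the remainder is a standard strict-convergence trick exploiting lower semicontinuity on the open complement $W_\delta$ together with convergence of the total masses.
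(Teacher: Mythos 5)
Your proposal is correct and follows essentially the same strategy as the paper: localize near $\partial\Omega$ with a Lipschitz cutoff that preserves the trace, apply the BV Leibniz rule to concentrate the estimate on a shrinking neighborhood of $\partial\Omega$, and then exploit lower semicontinuity on the complementary open set together with $\Vert Du_k\Vert(\Omega)\to\Vert Du\Vert(\Omega)$ to control the variation near the boundary. The only cosmetic differences from the paper's proof are that you cut off the difference $u_k-u$ directly with a cutoff that equals $1$ near $\partial\Omega$, while the paper forms the interpolant $\eta u+(1-\eta)u_k$ with $\eta$ equal to $1$ in the interior (so $1-\eta$ plays the role of your $\eta_\delta$), and that you rederive the BV--Lipschitz Leibniz bound via approximation where the paper simply cites~\cite[Lemma~3.2]{HKLS}.
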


\begin{proof}
For $t>0$, let
\[
\Om_t:=\{x\in \Om :\, \dist(x, X\setminus\Om)>t\}.
\]
Fix $\eps>0$.
Choose $\eta\in \Lip_c(\Omega)$ such that $0\le \eta\le 1$, $\eta=1$ in $\Omega_{2\eps}$,
$\eta=0$ in $X\setminus \Omega_{\eps}$, and $g_{\eta}\le 1/\eps$.
Let 
\[
v_k:=\eta u +(1-\eta)u_k,\quad k\in\N.
\]
Note that by lower semicontinuity of the total variation with respect to $L^1$-convergence,
\[
\Vert Du\Vert(\Omega_{2\eps})\le \liminf_{k\to\infty}\Vert Du_k\Vert(\Omega_{2\eps}).
\]
Since also $\Vert Du_k\Vert(\Omega)\to \Vert Du\Vert(\Omega)$ by assumption, necessarily
\begin{equation}\label{eq:upper semicontinuity near boundary}
\Vert Du\Vert(\Omega\setminus \Omega_{2\eps})\ge \limsup_{k\to\infty}\Vert Du_k\Vert(\Omega\setminus \Omega_{2\eps}).
\end{equation}
We have $v_k-u=(1-\eta)(u_k-u)$, and so
by the Leibniz rule from~\cite[Lemma~3.2]{HKLS}, and
\eqref{eq:Caccioppoli sets form an algebra},
\begin{align*}
&\limsup_{k\to\infty}\Vert D(v_k-u)\Vert(\Om)\\
     &\qquad \le \limsup_{k\to\infty}\Vert D(u_k-u)\Vert(\Om\setminus\Om_{2\eps})
   +\limsup_{k\to\infty}\int_{\Omega_{\eps}\setminus \Omega_{2\eps}}g_{\eta}|u_k-u|\, d\mu\\
     &\qquad \le \limsup_{k\to\infty}\Vert Du_k\Vert(\Om\setminus\Om_{2\eps})+
     \Vert Du\Vert(\Om\setminus\Om_{2\eps})+0\\
     &\qquad \le 2\Vert Du\Vert(\Om\setminus\Om_{2\eps})
\end{align*}
by~\eqref{eq:upper semicontinuity near boundary}.
Moreover,
$\lim_{k\to\infty}\Vert v_k-u\Vert_{L^1(\Om)}=0$, so in total
\[
\limsup_{k\to\infty}\Vert v_k-u\Vert_{\BV(\Omega)}\le 2 \Vert Du\Vert(\Omega\setminus \Omega_{2\eps}).
\]
Since $T_+$ is assumed to be linear and bounded, for some constant $C_{\Omega}>0$
and for any $v\in\BV(\Omega)$ we have
\[
\int_{\partial\Omega}|T_+v|\,d\mathcal H\le C_{\Omega}\Vert v\Vert_{\BV(\Omega)}.
\]
Note that $T_+ v_k=T_+ u_k$ since $v_k=u_k$ in a neighborhood of $\partial\Omega$. Thus
\begin{align*}
\limsup_{k\to\infty}\int_{\partial\Omega}|T_+u_k-T_+u|\,d\mathcal H
&= \limsup_{k\to\infty}\int_{\partial\Omega}|T_+v_k-T_+u|\,d\mathcal H\\
&\le \limsup_{k\to\infty}C_{\Omega}\Vert v_k-u\Vert_{\BV(\Omega)}\\
&\le 2C_{\Omega}\Vert Du\Vert(\Om\setminus\Om_{2\eps}).
\end{align*}
By letting $\eps\to 0$, we obtain the result.
\end{proof}

\begin{lemma}\label{lem:use1}
Let $\Om\subset X$ be a bounded open set such that $\Om$ satisfies the exterior measure density condition~\eqref{eq:ext-meas-dens-cond}, $\Om$ supports a $(1,1)$-Poincar\'e 
inequality, and there is a 
constant $C\ge 1$ such that whenever
$x\in\partial\Om$ and $0<r<\diam(\Om)$, we have 
\[
 \mu(B(x,r)\cap\Om)\ge \frac{\mu(B(x,r))}{C}.
\]
Assume also that 
for all $x\in\partial\Om$ and $0<r<\diam(\Om)$,
\[
\mathcal H(\Om\cap B(x,r))\le C \frac{\mu(B(x,r))}{r}.
\]
Let $f\in\Lip(X)$ be boundedly supported, and let $u\in \BV(\Omega)$.
Then there exists a sequence
$(\psi_k)\subset\Lip(X)$ converging to $u$ in $L^1(\Omega)$ such that
$\psi_k=f$ in $X\setminus\Omega$ and 
\[
\limsup_{k\rightarrow\infty}\Vert D\psi_k\Vert(\Omega)
\le \Vert Du\Vert(\Omega)+\int_{\partial^*\Omega}|T_+u-f|\, dP_+(\Omega,\cdot).
\]
\end{lemma}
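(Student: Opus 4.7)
The plan is to build $\psi_k$ as the Lipschitz interpolation $\psi_{k,i} := \Psi_i v_k + (1-\Psi_i)f$, where $v_k$ is a bounded Lipschitz strict approximation of $u$ inside $\Omega$ and $\Psi_i \in \liploc(X)$ is a cutoff realizing the infimum defining $P_+(\Omega, X)$; the proof will be finished by a diagonal argument in $(k,i)$.

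First, I would truncate $u$ at height $\pm N$ (justified by coarea and commutation of trace with truncation as $N \to \infty$), and then combine Lemma~\ref{lem:L1 loc and L1 convergence} with standard Lipschitz approximation on the nice domain $\Omega$ to obtain bounded $v_k \in \Lip(X)$ with $v_k \to u$ in $L^1(\Omega)$ and $\int_\Omega g_{v_k}\, d\mu \to \|Du\|(\Omega)$. The interior measure density, the upper $\mathcal H$-bound, and the Poincar\'e inequality for $\Omega$ yield a linear bounded inner trace operator $T_+ \colon \BV(\Omega) \to L^1(\partial\Omega, \mathcal H)$, so Proposition~\ref{lemma:use-1} gives $T_+ v_k \to T_+ u$ in $L^1(\partial\Omega, \mathcal H)$. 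Theorem~\ref{thm:P+vsP} (applicable since $\Omega$ satisfies~\eqref{eq:ext-meas-dens-cond}) together with~\eqref{eq:def of theta} yield $P_+(\Omega, \cdot) \le C\mathcal H$ on Borel subsets of $\partial^*\Omega$, which upgrades the trace convergence to $L^1(\partial^*\Omega, P_+(\Omega, \cdot))$.

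Next, I would select $\Psi_i \in \liploc(X)$ per Definition~\ref{def:p+} with $\Psi_i = 0$ on $X \setminus \Omega$, $\Psi_i \to \ch_\Omega$ in $L^1(X)$, and $\int_X g_{\Psi_i}\, d\mu \to P_+(\Omega, X)$. Continuity of $\Psi_i$ and its vanishing outside $\Omega$ force $\psi_{k,i} = f$ on $X \setminus \Omega$, and boundedness of $v_k$ and $f$ gives $\psi_{k,i} \in \Lip(X)$. The Leibniz rule from~\cite[Lemma~2.18]{BB} then yields
\[
g_{\psi_{k,i}} \le \Psi_i g_{v_k} + (1-\Psi_i) g_f + g_{\Psi_i}|v_k - f|,
\]
and integrating over $\Omega$ gives
\[
\|D\psi_{k,i}\|(\Omega) \le \int_\Omega g_{v_k}\, d\mu + L \int_\Omega (1-\Psi_i)\, d\mu + \int_\Omega g_{\Psi_i}|v_k - f|\, d\mu,
\]
where $L$ is the global Lipschitz constant of $f$.

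The main obstacle will be the last term: I need to show
\[
\limsup_{i\to\infty} \int_\Omega g_{\Psi_i}|v_k - f|\, d\mu \le \int_{\partial^*\Omega}|v_k - f|\, dP_+(\Omega, \cdot).
\]
After extraction, the Radon measures $g_{\Psi_i}\, d\mu$ converge weakly* to a finite Radon measure $\nu$ of total mass $P_+(\Omega, X)$; the $L^1$-convergence $\Psi_i \to \ch_\Omega$ together with $\Psi_i = 0$ on $X \setminus \Omega$ forces $\nu$ to concentrate on $\partial\Omega$, and the inf-definition of $P_+$ (cf.\ the proof in the Appendix) gives $\nu \le P_+(\Omega, \cdot)$. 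Since $v_k \in \Lip(X)$ is in particular continuous on $\overline\Omega$, weak* convergence applied to the continuous function $|v_k - f|$ yields the claim. Finally, a diagonal choice $\psi_k := \psi_{k,i(k)}$ with $i(k)$ large enough will kill the middle term, the $L^1(\partial^*\Omega, P_+)$-convergence $T_+v_k \to T_+u$ will transfer the boundary integral to the desired limit, and $\psi_k \to u$ in $L^1(\Omega)$ follows from the double convergence $\psi_{k,i} \to v_k \to u$.
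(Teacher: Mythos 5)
Your proposal follows essentially the same architecture as the paper's proof: interpolate a strict Lipschitz approximation of $u$ in $\Omega$ against $f$ via the cutoffs defining $P_+(\Omega,\cdot)$, use the Leibniz rule, identify the weak* limit of $g_{\Psi_i}\,d\mu$, pass the boundary term through trace continuity (Proposition~\ref{lemma:use-1} plus Theorem~\ref{thm:P+vsP} and \eqref{eq:def of theta}), and diagonalize. Two remarks.

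First, there is a genuine gap in the justification of the crucial comparison $\nu\le P_+(\Omega,\cdot)$. You attribute it to ``the inf-definition of $P_+$,'' but the infimum in Definition~\ref{def:p+} gives $P_+(\Omega,U)\le \liminf_i\int_U g_{\Psi_i}\,d\mu$ for open $U$, and the portmanteau inequality for the weak* limit gives the \emph{same-direction} bound $\nu(U)\le\liminf_i\int_U g_{\Psi_i}\,d\mu$; these do not compare $\nu$ with $P_+$. What is actually needed is that $(\Psi_i)$ is a minimizing sequence: since all the $g_{\Psi_i}\,d\mu$ are supported in the compact set $\overline\Omega$, no mass escapes, so $\nu(X)=\lim_i\int_X g_{\Psi_i}\,d\mu=P_+(\Omega,X)$; then for any closed $F$, writing $P_+(\Omega,F)=P_+(\Omega,X)-P_+(\Omega,X\setminus F)\ge P_+(\Omega,X)-\liminf_i\int_{X\setminus F}g_{\Psi_i}\,d\mu=\limsup_i\int_F g_{\Psi_i}\,d\mu$, and combining with the open/closed portmanteau inequalities for $\nu$, one sandwiches to get $\nu=P_+(\Omega,\cdot)$ on generic open sets, hence everywhere. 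This is exactly what the paper asserts (without spelling it out) when it claims $g_{\eta_m}\,d\mu\to dP_+(\Omega,\cdot)$ weakly*; the mass-convergence step is the hidden ingredient and cannot be replaced by the inf-definition alone.

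Second, the preliminary truncation of $u$ at height $\pm N$ is unnecessary. The $(1,1)$-Poincar\'e inequality assumed for $\Omega$ gives density of $\Lip(\Omega)$ in $N^{1,1}(\Omega)$ (see~\cite[Theorem 5.1]{BB}), so the strict approximations $\phi_k$ may be taken globally Lipschitz on $\Omega$ directly, extended to $X$ by McShane, and are automatically bounded on the compact set $\overline\Omega$; this also cleanly yields $T_+\phi_k=\phi_k$ on $\partial\Omega$, which you rely on implicitly. The route through $\liploc(\Omega)$ via Lemma~\ref{lem:L1 loc and L1 convergence} plus ``standard Lipschitz approximation'' is vaguer and does not obviously give functions Lipschitz up to $\partial\Omega$.
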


\begin{remark}
Note that some requirement similar to the exterior measure density condition
in the above lemma is 
needed, for without such a requirement we cannot talk about the trace $T_+u$ of a function $u\in \BV(\Om)$. This difficulty is illustrated by the example of the slit disk, 
see~\cite[Example 3.2]{LahSh}.
\end{remark}

\begin{proof}
The assumptions on $\Om$ guarantee that the trace operator
$T_{+}\colon\allowbreak\BV(\Omega)\to L^1(\partial\Omega,\mathcal H)$ is linear and bounded,
see~\cite[Theorem~5.5]{LahSh}.
The assumptions also together imply that 
$\mathcal H(\partial\Om\setminus \partial^*\Om)=0$.

Let $(\eta_{m})\subset \liploc(X)$ such that  $0\le \eta_m\le 1$ on $X$, 
$\eta_{m}=0$ on $X\setminus \Omega$, $\eta_{m}\rightarrow \ch_{\Omega}$ in $L^1(X)$, and
\[
P_+(\Omega,X)=\lim_{m\rightarrow\infty}\int_{\Omega}g_{\eta_m}\, d\mu.
\]
Clearly we have in fact $\eta_m\in\Lip(X)$ for every $m\in\N$.
It is straightforward to check that then also $g_{\eta_m}\,d\mu\to dP_+(\Om,\cdot)$
weakly* in the sense of measures on $X$.
Since $\Om$ supports a $(1,1)$-Poincar\'e inequality, Lipschitz functions are dense in $N^{1,1}(\Om)$, see~\cite[Theorem 5.1]{BB}. It follows that 
there exists a sequence $(\phi_k)\subset \Lip(\Om)$ such that
$\phi_k\to u$ in $L^1(\Om)$ and 
\[
\lim_{k\rightarrow\infty}\int_\Om g_{\phi_k}\, d\mu=\Vert Du\Vert(\Om).
\]
By lower semicontinuity of the total variation with respect to $L^1$-convergence,
necessarily also
\[
\lim_{k\rightarrow\infty}\Vert D\phi_k\Vert(\Omega)=\Vert Du\Vert(\Om).
\]
Now we set 
\[
\psi_{k,m}:=\eta_m\phi_k+(1-\eta_m)f.
\] 
Then $\psi_{k,m}\in\Lip(X)$ and 
\[
\psi_{k,m}\to u\quad\text{in }L^1(\Om)
\]
as $m\to\infty$ and then $k\to \infty$. Furthermore, $\psi_{k,m}=f$ on $X\setminus\Omega$. By the Leibniz rule of~\cite[Lemma 2.18]{BB},
\[
  g_{\psi_{k,m}}\leq g_{\phi_k}\eta_m+g_f(1-\eta_{m})
    +g_{\eta_{m}}|\phi_k-f|.
\]
Here $g_{\psi_{k,m}}$, $g_{\phi_k}$, $g_f$, and $g_{\eta_{m}}$ all denote minimal
$1$-weak upper gradients.
It follows that
\[
\int_\Omega g_{\psi_{k,m}}\, d\mu \le  \int_\Omega g_{\phi_k}\, d\mu+\int_{\Omega}g_f(1-\eta_{m})\, d\mu
  +\int_{\Omega}g_{\eta_{m}}|\phi_k-f|\, d\mu.
\]
As $f$ is a Lipschitz function and $\eta_{m}\rightarrow 1$ in $L^1(\Omega)$, we have
\[
\lim_{m\to \infty} \int_{\Omega}g_f(1-\eta_{m})\, d\mu =0.
\]
Note that the Lipschitz functions $\phi_k$ have Lipschitz extensions to
$X$, which we still denote by $\phi_k$, and that necessarily
$T_+\phi_k=\phi_k$ 
on $\partial \Om$.
Since $g_{\eta_{m}}\,d\mu\rightarrow dP_{+}(\Omega,\cdot)$ weakly* in the sense of
measures, 
\[
\lim_{m\to\infty}\int_{\Omega}|\phi_k-f|g_{\eta_{m}}\, d\mu=\int_{\partial\Omega}|T_+\phi_k-f|\, dP_+(\Omega,\cdot).
\]
It follows from Lemma~\ref{lemma:use-1} that $T_+\phi_k\to T_+u$ in $L^1(\partial\Omega,\mathcal H)$, and then by~\eqref{eq:def of theta} and Theorem~\ref{thm:P+vsP},
also $T_+\phi_k\to T_+u$ in $L^1(\partial\Omega,P_+(\Om,\cdot))$. Thus,
recalling also that $\mathcal H(\partial\Om\setminus\partial^*\Om)=0$,
\[
\lim_{k\to\infty}\lim_{m\to\infty}\int_{\Omega}|\phi_k-f|g_{\eta_{m}}\, d\mu
=\int_{\partial^*\Omega}|T_+u-f|\, dP_+(\Omega,\cdot),
\]
and now we can choose a diagonal sequence $\{\psi_{k,m_k}\}_k$ to satisfy the conclusion of the lemma.
%
%
\end{proof}

In what follows, we denote by $T_-u$
the outer trace (if it exists) of a $\BV$ function $u\in \BV(X)$, namely, 
$T_-u$ is the interior trace of $u$ considered with respect to $X\setminus\Om$ as
given in Definition~\ref{def:Madayan-traces}. We will only need the following
proposition for the
case where $u=f$ on $X\setminus\Om$ for some Lipschitz function $f$; in this case, we always have
$T_-u=f$ on $\partial\Om\setminus N_{X\setminus\Om}$, in particular, $T_-u=f$ on $\partial^*\Om$.

\begin{proposition}\label{prop:representation of variation with boundary term}
Let $\Omega\subset X$ be a $\mu$-measurable set with $P(\Omega,X)<\infty$ and let $u\in\BV(X)$ such that for $\mathcal H$-almost every $x\in\partial^*\Omega$, $T_+ u(x)$ and $T_- u(x)$ exist. Then
\[
\Vert Du\Vert(X)=\Vert Du\Vert(X\setminus\partial^*\Omega)+\int_{\partial^*\Omega}|T_+u-T_-u|\,d P(\Omega,\cdot).
\]
\end{proposition}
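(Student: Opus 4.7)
The plan is to split $\Vert Du\Vert(X)=\Vert Du\Vert(X\setminus\partial^*\Omega)+\Vert Du\Vert(\partial^*\Omega)$ since $\Vert Du\Vert$ is a Radon measure, so that the statement reduces to showing
\[
\Vert Du\Vert(\partial^*\Omega)=\int_{\partial^*\Omega}|T_+u-T_-u|\,dP(\Omega,\cdot).
\]
The cleanest route is via the coarea formula~\eqref{eq:coarea} applied to the Borel set $F=\partial^*\Omega$, combined with the representation~\eqref{eq:def of theta}:
\[
\Vert Du\Vert(\partial^*\Omega)=\int_{-\infty}^{\infty}P(\{u>t\},\partial^*\Omega)\,dt=\int_{-\infty}^{\infty}\int_{\partial^*\{u>t\}\cap\partial^*\Omega}\theta_{\{u>t\}}\,d\mathcal H\,dt.
\]
Applying Tonelli to swap the order of integration reduces the problem to a pointwise $\mathcal H$-a.e.\ statement on $\partial^*\Omega$.

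The next step is a pointwise analysis at $\mathcal H$-a.e.\ $x\in\partial^*\Omega$. Such points lie in $\Sigma_\gamma\Omega$ by~\eqref{eq:density of E}, so both $\Omega$ and $X\setminus\Omega$ have density bounded between $\gamma$ and $1-\gamma$ at $x$. Combining this with the definitions of $T_+u(x)$ and $T_-u(x)$, I would argue that for every $t$ strictly between $a(x):=\min\{T_+u(x),T_-u(x)\}$ and $b(x):=\max\{T_+u(x),T_-u(x)\}$, one of the sets $\{u>t\}\triangle\Omega$ or $\{u>t\}\triangle(X\setminus\Omega)$ has density zero at $x$. This is immediate from the trace condition $\Xint-_{\Omega\cap B(x,r)}|u-T_+u(x)|\,d\mu\to 0$, and its analogue for $T_-u$: outside a negligible portion, $u>t$ on whichever side the corresponding trace exceeds $t$, and $u<t$ on the other side. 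Consequently, for such $t$ we have $x\in\partial^*\{u>t\}$ and also $u^\wedge(x)=a(x)$, $u^\vee(x)=b(x)$; for $t\notin[a(x),b(x)]$ the point $x$ lies in $I_{\{u>t\}}$ or $O_{\{u>t\}}$ and does not contribute.

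The main obstacle is then the pointwise identification $\theta_{\{u>t\}}(x)=\theta_\Omega(x)$ at $\mathcal H$-a.e.\ $x\in\partial^*\Omega$ for $t\in(a(x),b(x))$. I would deduce this from the fact that $\theta_E$ is the Radon--Nikodym density of $\Vert D\chi_E\Vert$ with respect to $\mathcal H\lfloor\partial^*E$ via~\eqref{eq:def of theta}: since $\{u>t\}$ agrees with $\Omega$ (or its complement) up to a set whose density at $x$ is zero, the perimeter measures $\Vert D\chi_{\{u>t\}}\Vert$ and $\Vert D\chi_\Omega\Vert$ have the same infinitesimal behavior at $x$, so their densities with respect to $\mathcal H$ coincide there. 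Granting this, the inner $t$-integral evaluates to
\[
\int_{a(x)}^{b(x)}\theta_{\{u>t\}}(x)\,dt=|T_+u(x)-T_-u(x)|\,\theta_\Omega(x),
\]
with the case $T_+u(x)=T_-u(x)$ contributing zero on both sides. Integrating over $\partial^*\Omega$ with respect to $\mathcal H$ and applying~\eqref{eq:def of theta} to $\Omega$ itself identifies the result as $\int_{\partial^*\Omega}|T_+u-T_-u|\,dP(\Omega,\cdot)$, which finishes the proof.
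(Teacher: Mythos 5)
Your strategy mirrors the paper's. You reduce to showing $\Vert Du\Vert(\partial^*\Omega)=\int_{\partial^*\Omega}|T_+u-T_-u|\,dP(\Omega,\cdot)$ and arrive at the intermediate formula $\Vert Du\Vert(\partial^*\Omega)=\int_{\partial^*\Omega}\int_{u^\wedge(x)}^{u^\vee(x)}\theta_{\{u>t\}}(x)\,dt\,d\mathcal H(x)$; the paper derives it from the decomposition of \cite[Theorem 5.3]{AMP} (noting that the Cantor part vanishes on $\partial^*\Omega$), while you use the coarea formula and Tonelli, and these routes coincide. Your pointwise analysis identifying $\{u^\wedge(x),u^\vee(x)\}$ with $\{T_-u(x),T_+u(x)\}$ and deriving that $\{u>t\}$ agrees with $\Omega$ (or its complement) up to $\mu$-density zero at $x$, for $t$ in the open interval between the traces, is correct; this is the content of \cite[Proposition 5.8(v)]{HKLL}, which the paper cites.

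The genuine gap is in concluding $\theta_{\{u>t\}}(x)=\theta_\Omega(x)$ from this density-zero statement. You assert that the two perimeter measures ``have the same infinitesimal behavior at $x$'' and hence the same Radon--Nikodym density with respect to $\mathcal H$. This does not follow from what you have established: the density-zero condition on $\{u>t\}\Delta\Omega$ is a codimension-$0$ statement about $\mu$-measure, while $\theta_E$ is a codimension-$1$ density, and a set $E\Delta F$ with $\mu$-density zero at $x$ could a priori carry arbitrarily large perimeter arbitrarily close to $x$, so there is no formal implication that $\Vert D\ch_E\Vert$ and $\Vert D\ch_F\Vert$ behave alike near $x$. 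The correct statement --- that if $E\Delta F$ has $\mu$-density zero at $x\in\partial^*E\cap\partial^*F$, then $\theta_E=\theta_F$ at $\mathcal H$-a.e.\ such $x$ --- is exactly \cite[Proposition 6.2]{AMP}, which the paper cites at precisely this step and which relies on the fine structure theory of sets of finite perimeter. Your heuristic captures the intuition behind that proposition but does not prove it, so you need to either invoke it or reconstruct its argument.
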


\begin{proof}
We only need to prove that
\[
\Vert Du\Vert(\partial^*\Omega)=\int_{\partial^*\Omega}|T_+u-T_-u|\,d P(\Omega,\cdot).
\]
By~\cite[Theorem 5.3]{AMP}, we have $\Vert Du\Vert^c(\partial^*\Om)=0$, and then by
the decomposition~\eqref{eq:decomposition},
\[
\Vert Du\Vert(\partial^*\Omega)=\int_{\partial^*\Omega}\int_{u^{\wedge}(x)}^{u^{\vee}(x)}\theta_{\{u>t\}}(x)\,dt\,d\mathcal H(x).
\]
It is fairly easy to check that $\{u^{\wedge}(x),u^{\vee}(x)\}=\{T_-u(x),T_+u(x)\}$, whenever both traces exist.  This is also proved in~\cite[Proposition 5.8(v)]{HKLL}. Suppose that $u^{\wedge}(x)=T_-u(x)$ and
$u^{\vee}(x)=T_+u(x)$, the other case being analogous. In the proof of~\cite[Proposition 5.8(v)]{HKLL} it is also shown that
\[
\lim_{r\to 0^+}\frac{\mu(B(x,r)\cap(\{u>t\}\Delta \Omega))}{\mu(B(x,r))}=0
\]
for all $t\in (u^{\wedge}(x),u^{\vee}(x))$. We also have $x\in\partial^*\{u>t\}$ for all $t\in (u^{\wedge}(x),u^{\vee}(x))$. 
According to~\cite[Proposition 6.2]{AMP}, we have $\theta_{\{u>t\}}(x)=\theta_{\Omega}(x)$ for $\mathcal H$-almost every such $x$.
Hence we have
\begin{align*}
 &\int_{\partial^* \Omega}\int_{u^{\wedge}(x)}^{u^{\vee}(x)}\theta_{\{u>t\}}(x)\,dt\,d\mathcal H(x)\\
 &=\int_{\partial^* \Omega}\int_{-\infty}^{\infty}\ch_{\{(u^{\wedge}(x),u^{\vee}(x))\}}(t)\theta_{\{u>t\}}(x)\,dt\,d\mathcal H(x)\\
 & =\int_{-\infty}^{\infty}\int_{\partial^* \Omega}\ch_{\{(-\infty,t)\}}(u^{\wedge}(x))\ch_{\{(t,\infty)\}}(u^{\vee}(x))
    \theta_{\{u>t\}}(x)\,d\mathcal H(x)\,dt\\
 & =\int_{-\infty}^{\infty}\int_{\partial^* \Omega}\ch_{\{(-\infty,t)\}}(u^{\wedge}(x))\ch_{\{(t,\infty)\}}(u^{\vee}(x))
     \theta_{\Omega}(x)\,d\mathcal H(x)\,dt\\
 &=\int_{\partial^* \Omega}\int_{-\infty}^{\infty}\ch_{\{(u^{\wedge}(x),u^{\vee}(x))\}}(t)\,dt\,\theta_{\Omega}(x)\,d\mathcal H(x)\\
 &=\int_{\partial^* \Omega}(u^{\vee}(x)-u^{\wedge}(x))\theta_{\Omega}(x)\,d\mathcal H(x)\\
  &=\int_{\partial^* \Omega}(u^{\vee}-u^{\wedge})\,dP(\Omega,\cdot)\\
   &=\int_{\partial^* \Omega}|T_+u-T_-u|\,dP(\Omega,\cdot).
\end{align*}
\end{proof}

For $\mu$-measurable $\Om\subset X$ and any $\kappa>0$, define the weighted measure
\begin{equation}\label{eq:definition of kappa mu}
d \mu_{\kappa}:=(\ch_{\Omega}+\kappa\,\ch_{X\setminus \Omega})\,d \mu.
\end{equation}
Consider then the space $(X,d,\mu_{\kappa})$. It is easy to show that this is still a complete metric 
space such that $\mu_{\kappa}$ is doubling and supports a $(1,1)$-Poincar\'e inequality.
We use the subscript $\kappa$ to signify that a perimeter or some other quantity is
taken with respect to the measure $\mu_{\kappa}$.

\begin{theorem}\label{thm:dirichletT}
Let $\Om\subset X$ be a nonempty bounded open set of finite perimeter such that
$\capa_1(X\setminus\Om)>0$,
$\Om$ satisfies the exterior measure density condition~\eqref{eq:ext-meas-dens-cond},
and $\Om$ supports a $(1,1)$-Poincar\'e 
inequality. Suppose also that there is a 
constant $C\ge 1$ such that whenever
$x\in\partial\Om$ and $0<r<\diam(\Om)$, we have 
\[
 \mu(B(x,r)\cap\Om)\ge \frac{\mu(B(x,r))}{C}.
\]
Finally, assume that 
for all $x\in\partial\Om$ and $0<r<\diam(\Om)$,
\[
\mathcal H(\Om\cap B(x,r))\le C\, \frac{\mu(B(x,r))}{r}.
\]
Let $f\in \Lip(X)$ be boundedly supported. 
For each $p>1$ let $u_p$ be a \p-harmonic function in $\Om$ such that $u_p|_{X\setminus\Om}=f$. Suppose 
that $(u_p)_{p>1}$ is a sequence of such \p-harmonic functions and that $u_p \to u$ in $L^1(\Om)$ as 
$p\to 1^+$. Then $u$ is a solution to the minimization problem $(T)$ of Definition~\ref{def:dirichlet}.
\end{theorem}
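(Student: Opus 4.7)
Fix any competitor $w\in\BV(\Om)$; the goal is to show $F_T(u)\le F_T(w)$, where $F_T(v):=\|Dv\|(\Om)+\int_{\partial^*\Om}|T_+v-f|\,dP_+(\Om,\cdot)$. The strategy is to feed Lipschitz penalty approximations of $w$ that agree with $f$ outside $\Om$ into the $p$-harmonic variational inequality for $u_p$, pass to the limit $p\to 1^+$, and then transfer the resulting bound to the (T)-functional at $u$ by exploiting the least gradient property given by Theorem~\ref{thm:main1}.

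\textbf{Upper bound via $p$-harmonic competition.} Apply Lemma~\ref{lem:use1} to $w$ to obtain $(\psi_k)\subset\Lip(X)$ with $\psi_k\equiv f$ on $X\setminus\Om$, $\psi_k\to w$ in $L^1(\Om)$, and $\limsup_k\int_\Om\Lip\psi_k\,d\mu\le F_T(w)$. (The proof of that lemma constructs $\psi_k=\eta_m\phi_k+(1-\eta_m)f$ from inner cutoffs $\eta_m$ satisfying $g_{\eta_m}\,d\mu\to dP_+(\Om,\cdot)$ weakly$^*$; the Leibniz subadditivity~\eqref{eq:subadditivity of local Lipschitz constants} allows one to replace $g_{\psi_k}$ by $\Lip\psi_k$ in the final estimate at no loss.) Since $\psi_k-u_p\equiv 0$ on $X\setminus\Om$, $\psi_k$ is admissible in the $p$-harmonic Dirichlet problem, and by $p$-harmonicity together with Remark~\ref{rmk:minimal weak upper gradients},
\[
\int_\Om g_{u_p}^p\,d\mu\le\int_\Om(\Lip\psi_k)^p\,d\mu.
\]
H\"older's inequality combined with~\eqref{eq:comparing variation and upper gradients} yields $\|Du_p\|(\Om)\le\mu(\Om)^{(p-1)/p}\bigl(\int_\Om(\Lip\psi_k)^p\,d\mu\bigr)^{1/p}$, and dominated convergence (for bounded Lipschitz $\psi_k$ on the finite-measure set $\Om$) lets me take $p\to 1^+$ for fixed $k$ and then $k\to\infty$ to conclude $\liminf_{p\to 1^+}\|Du_p\|(\Om)\le F_T(w)$.

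\textbf{Transferring the bound to $F_T(u)$.} Now I use that $u$ is a function of least gradient in $\Om$ by Theorem~\ref{thm:main1}. Choose $\chi_\epsilon\in\Lip(X)$ equal to $1$ on $\Om_{2\epsilon}$ and to $0$ outside $\Om_\epsilon$, where $\Om_t:=\{\dist(\cdot,X\setminus\Om)>t\}$, arranged so that $g_{\chi_\epsilon}\,d\mu\rightharpoonup dP_+(\Om,\cdot)$ weakly$^*$ as $\epsilon\to 0$---a construction available from the $\eta_m$ built in the proof of Lemma~\ref{lem:use1} and from Proposition~\ref{prop:approximation with Lip functions from inside}. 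Then $\chi_\epsilon(\psi_k-u)\in\BV_c(\Om)$, so the least gradient property of $u$ gives
\[
\|Du\|(\Om)\le\|D\bigl((1-\chi_\epsilon)u+\chi_\epsilon\psi_k\bigr)\|(\Om).
\]
Expanding the right-hand side by the Leibniz rule in the transition strip $\Om_\epsilon\setminus\Om_{2\epsilon}$, using $T_+\psi_k=f$ on $\partial^*\Om$ (since $\psi_k$ is Lipschitz and $\psi_k\equiv f$ outside $\Om$) together with $\|Du\|(\Om\setminus\Om_{2\epsilon})\to 0$ and the weak$^*$ convergence $\int_{\Om_\epsilon\setminus\Om_{2\epsilon}}g_{\chi_\epsilon}|\psi_k-u|\,d\mu\to\int_{\partial^*\Om}|f-T_+u|\,dP_+(\Om,\cdot)$, and finally combining with the Step~1 bound on $\int_\Om\Lip\psi_k\,d\mu$ as $k\to\infty$, delivers $F_T(u)\le F_T(w)$.

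\textbf{Main obstacle.} The hardest step is the bookkeeping in the last paragraph: the transition-strip contribution must be arranged so that, after $\epsilon\to 0$, both $\|Du\|(\Om)$ and the boundary jump $\int_{\partial^*\Om}|T_+u-f|\,dP_+$ appear on the left with the correct signs, while $|T_+w-f|$ appears on the right under the inner perimeter rather than the perimeter $P$. A direct lower semicontinuity of $\|Du_p\|$ on $\overline\Om$, combined with Proposition~\ref{prop:representation of variation with boundary term}, would only produce the $F_B$-type bound $\|Du\|(\Om)+\int|T_+u-f|\,dP\le F_T(w)$, which is strictly weaker than $F_T(u)\le F_T(w)$ whenever $P_+\neq P$ (as in the weighted Euclidean example following Definition~\ref{def:dirichlet}). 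The resolution hinges on the cutoffs $\chi_\epsilon$ being supported strictly inside $\Om$, which forces the inner perimeter $P_+$ (not $P$) to be the natural boundary weight in the limit---precisely the conceptual role played by Lemma~\ref{lem:use1} and the inner perimeter machinery of Section~6.
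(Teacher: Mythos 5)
Your Step 1 (the upper bound $\limsup_{p\to 1^+}\|Du_p\|(\Om)\le F_T(w)$ via Lemma~\ref{lem:use1} and $p$-harmonic competition) is correct and matches the paper's proof of~\eqref{eq:almost-close}. The gap is in the transfer step, and it is not a bookkeeping difficulty as you suggest in your last paragraph but a sign obstruction that cannot be repaired within your strategy.

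Testing the least gradient property of $u$ with $\chi_\eps(\psi_k-u)\in\BV_c(\Om)$ gives $\|Du\|(\Om)\le\|D((1-\chi_\eps)u+\chi_\eps\psi_k)\|(\Om)$. When you expand the right-hand side with the Leibniz rule, the transition-strip contribution $\int g_{\chi_\eps}|\psi_k-u|\,d\mu$ enters the estimate as an \emph{added} term on the right, not a subtracted term. After canceling $\|Du\|(\Om\setminus\Om_{2\eps})$ and letting $\eps\to 0$ and then $k\to\infty$, the best inequality you can produce is of the form
\[
\|Du\|(\Om)\ \le\ F_T(w)\ +\ \int_{\partial^*\Om}|T_+u-f|\,dP_+(\Om,\cdot),
\]
which is strictly \emph{weaker} than the desired $F_T(u)\le F_T(w)$ whenever the jump term is nonzero. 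Interior perturbations of $u$ simply cannot see the boundary jump between $T_+u$ and $f$ with the sign required by the functional $F_T$; the information that $u$ does not jump at the boundary ``for free'' has to come from the $p$-harmonic approximation itself, not from $u$ being of least gradient in $\Om$.

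The paper's proof handles exactly this half of the argument through a different mechanism: it introduces the weighted measure $\mu_\kappa=(\ch_\Om+\kappa\ch_{X\setminus\Om})\,d\mu$ and applies lower semicontinuity of $\|D_\kappa\cdot\|$ over all of $X$ (so the variation \emph{does} see the boundary), decomposes $\|D_\kappa u\|(X)$ via Proposition~\ref{prop:representation of variation with boundary term}, cancels the identical exterior contributions $\|D_\kappa f\|(O_\Om)$ on both sides of the inequality, and thereby obtains
\[
\liminf_{p\to 1^+}\|Du_p\|(\Om)\ \ge\ \|Du\|(\Om)+\int_{\partial^*\Om}|T_+u-f|\,dP_\kappa(\Om,\cdot),
\]
with the jump term on the correct side. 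The remaining (and substantial) work is to send $\kappa\to\infty$ and prove $P_\kappa(\Om,\cdot)\to P_+(\Om,\cdot)$, which occupies Lemmas~\ref{lem:approximation by regular sets}--\ref{lem:lower semicontinuity for P+} and Proposition~\ref{prop:comparison of P plus and P kappa}. None of this machinery appears in your proposal, and the interior-cutoff argument you offer in its place does not produce the required inequality.
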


\begin{proof}[Beginning of the proof of Theorem~\ref{thm:dirichletT}]
Note that by combining the exterior measure density condition
\eqref{eq:ext-meas-dens-cond} and~\eqref{eq:density of E},
we obtain that
\begin{equation}\label{eq:exterior measure density condition in theorem consequence}
\liminf_{r\to 0^+}\frac{\mu(B(x,r)\setminus \Omega)}{\mu(B(x,r))}\ge\gamma\qquad\textrm{for }\mathcal H\textrm{-a.e. }x\in\partial\Omega.
\end{equation}
Note also again that the assumptions on $\Om$ guarantee that the trace operator
$T_{+}\colon\BV(\Omega)\to L^1(\partial\Omega,\mathcal H)$ is linear and bounded,
see~\cite[Theorem~5.5]{LahSh}.

Let $v\in \BV(\Omega)$. By combining \eqref{eq:def of theta} and
Theorem \ref{thm:P+vsP}, we know that $P_+(\Om,\cdot)$ is concentrated on $\partial^*\Om$.
Thus we need to show that
\[
\Vert Dv\Vert(\Om)+\int_{\partial^*\Om}|T_+v-f|\, dP_+(\Omega,\cdot)\le
\Vert Dv\Vert(\Om)+\int_{\partial^*\Om}|T_+v-f|\, dP_+(\Omega,\cdot).
\]
By Lemma~\ref{lem:use1},
there is a sequence $(\psi_k)\subset\Lip(X)$ with $\psi_k=f$ in $X\setminus\Om$ such that
\[
\limsup_{k\to\infty} \int_\Om g_{\psi_k}\, d\mu\le \Vert Dv\Vert(\Om)+\int_{\partial^*\Om}|T_+v-f|\, dP_+(\Omega,\cdot).
\]
Observe that each $\psi_k$ can act as a test function for testing the $p$-harmonicity of $u_p$. Therefore by~\eqref{eq:comparing variation and upper gradients}
\[
\Vert Du_p\Vert(\Om)\le \mu(\Omega)^{1-1/p}\left(\int_\Om g_{u_p}^p\, d\mu\right)^{1/p}
\le \mu(\Omega)^{1-1/p}\left(\int_\Om g_{\psi_k}^p\, d\mu\right)^{1/p}.
\]
Letting $p\to 1^+$, we see that
\[
 \limsup_{p\to 1^+}\Vert Du_p\Vert(\Om)\le \int_\Om g_{\psi_k}\, d\mu.
\]
Therefore by now letting $k\to\infty$, we have
\begin{equation}\label{eq:almost-close}
\limsup_{p\to 1^+}\Vert Du_p\Vert(\Om)\le
\Vert Dv\Vert(\Om)+\int_{\partial^*\Om}|T_+v-f|\, dP_+(\Omega,\cdot).
\end{equation}
Thus we need to prove that
\begin{equation}\label{eq:bdry-jump}
\limsup_{p\to 1^+}\Vert Du_p\Vert(\Om)\ge \Vert Du\Vert(\Om)+\int_{\partial^*\Om}|T_+u-f|\, dP_+(\Omega,\cdot)
\end{equation}
in order to complete the proof.

Recall the definitions of $O_\Om$ and $I_\Om$ from Definition~\ref{def:int-ext-bdy-meas}.
By the exterior measure density condition~\eqref{eq:ext-meas-dens-cond}, we know that
$\mathcal{H}(\partial\Om\cap I_{\Om})=0$.
Recall the definition of $(X,d,\mu_{\kappa})$ from~\eqref{eq:definition of kappa mu}.
We note that $\Vert D_{\kappa}u\Vert$ is absolutely continuous with respect to $\mathcal H$,
which follows from the $\BV$ coarea formula~\eqref{eq:coarea} and~\eqref{eq:def of theta}.
Thus $\Vert D_{\kappa}u\Vert(I_{\Omega}\setminus \Omega)=0$.
Note also that since $u=f$ on $X\setminus\Omega$,
$\partial^*\{u-f>t\}\cap O_\Om=\emptyset$ for all $t\in\R$, and so by the coarea formula
and~\eqref{eq:def of theta}
\begin{align*}
\Vert D_{\kappa}(u-f)\Vert(O_\Om)
&=\int_{-\infty}^{\infty}P_{\kappa}(\partial^*\{u-f>t\}, O_\Om)\,dt\\
&\le C(C_d,\kappa)\int_{-\infty}^{\infty}\mathcal H_{\kappa}(\partial^*\{u-f>t\}\cap O_\Om)\,dt\\
&=0.
\end{align*}
Then by the lower semicontinuity of the total variation and 
Proposition~\ref{prop:representation of variation with boundary term}, we have
\begin{align*}
&\liminf_{p\to 1^+}\Vert D_{\kappa}u_p\Vert(X)
\ge \Vert D_{\kappa}u\Vert(X)\\
&\qquad\qquad=\Vert D_{\kappa}u\Vert(I_\Om)+\Vert D_{\kappa}u\Vert(O_{\Omega})
+\Vert D_{\kappa}u\Vert(\partial^*\Om)\\
&\qquad\qquad=\Vert D_{\kappa}u\Vert(\Omega)+\Vert D_{\kappa}u\Vert(O_{\Omega})+\int_{\partial^*\Omega}|T_+u-f|\,d P_{\kappa}(\Omega,\cdot)\\
&\qquad\qquad=\Vert Du\Vert(\Omega)+\Vert D_{\kappa}f\Vert(O_{\Omega})+
\int_{\partial^*\Omega}|T_+u-f|\,d P_{\kappa}(\Omega,\cdot).
\end{align*}
Similarly, on the left-hand side we have
\begin{align*}
\Vert D_{\kappa}u_p\Vert(X)
&=\Vert Du_p\Vert(\Omega)+\Vert D_{\kappa}u_p\Vert(\partial^*\Omega)+\Vert D_{\kappa}u_p\Vert(O_{\Omega})\\
&=\Vert Du_p\Vert(\Omega)+\Vert D_{\kappa}f\Vert(O_{\Omega});
\end{align*}
note that $\Vert D_{\kappa}u_p\Vert(\partial^*\Omega)=0$ since $\mu(\partial^*\Omega)=0$. In total, we have
\begin{equation}\label{eq:p goes to one for kappa}
\liminf_{p\to 1^+}\Vert Du_p\Vert(\Om)\ge \Vert Du\Vert(\Om)+\int_{\partial^*\Omega}|T_+u-f|\, dP_{\kappa}(\Omega,\cdot).
\end{equation}
 The inequality~\eqref{eq:bdry-jump} will follow from the above inequality if we know that
\begin{equation}\label{eq:p-1-kappa-infty}
\lim_{\kappa\to\infty}\int_{\partial^*\Omega}|T_+u-f|\, dP_{\kappa}(\Omega,\cdot)
=\int_{\partial^*\Omega}|T_+u-f|\, dP_+(\Omega,\cdot).
\end{equation}
This is the focus of the rest of this section, and we will complete the proof at the end of the section.
\end{proof}

We will need the following approximation of a set of finite perimeter by "regular" sets.
This is inspired by a similar result in~\cite{AMG}, but note that we use a somewhat different, ``two-sided" 
definition of the Minkowski content, as given in~\eqref{eq:definition of Minkowski content}.
First recall that by~\cite[Proposition 3.5]{KoLa} (which is based on~\cite{BH}) the following coarea inequality holds:
for any $w\in\Lip_c(X)$,
\[
\int_{-\infty}^{\infty}\nu^+(\partial\{w>t\})\,dt\le \int_{X}\Lip w\,d\nu,
\]
where $\nu$ is any positive Radon measure. From this it follows in a straightforward manner
that
for any $w\in\liploc(X)$,
\begin{equation}\label{eq:coarea with Minkowski content}
\int_{-\infty}^{\infty}\mu^+(\partial\{w>t\})\,dt\le \int_{X}\Lip w\,d\mu.
\end{equation}

\begin{lemma}\label{lem:approximation by regular sets}
Let $E\subset X$ be a set of finite perimeter. Fix $0<\delta<1$.
Then there exists a sequence of open sets of finite perimeter $E_i\subset X$ with
$\ch_{E_i}-\ch_{E}\to 0$ in $L^1(X)$,
$\mu(\partial E_i)=0$ for each $i\in\N$,
\[
\limsup_{i\to\infty}P(E_i,X)\le (1-\delta)^{-1}P(E,X),
\]
and
\[
\limsup_{i\to\infty}\mu^+(\partial E_i)\le\frac{C P(E,X)}{\delta}. 
\]
\end{lemma}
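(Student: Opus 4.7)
The plan is to approximate $\ch_E$ in $L^1(X)$ by locally Lipschitz functions $w_i$ with $0 \le w_i \le 1$ and $I_i := \int_X \Lip w_i\, d\mu \to P(E,X)$, and then take $E_i$ to be a superlevel set $\{w_i > t_i\}$ for a carefully chosen level $t_i \in (0,1)$. Existence of such $w_i$ follows from the definition of total variation combined with Lemma~\ref{lem:L1 loc and L1 convergence}; the truncation $w_i \mapsto \max\{0,\min\{w_i,1\}\}$ preserves the $L^1$ convergence and does not increase $\Lip w_i$ pointwise, while lower semicontinuity of the total variation forces $I_i \to P(E,X)$. Any $E_i$ produced this way is automatically open (since $w_i$ is continuous) and satisfies $\partial E_i \subset \{w_i = t_i\}$.

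The main step is to select $t_i \in [\delta/4,\, 1-\delta/4]$ satisfying all three of
\[
P(\{w_i > t_i\}, X) \le (1-\delta)^{-1} I_i, \quad \mu^+(\partial\{w_i > t_i\}) \le \frac{4 I_i}{\delta}, \quad \mu(\{w_i = t_i\}) = 0.
\]
The BV coarea formula~\eqref{eq:coarea} gives $\int_0^1 P(\{w_i > t\}, X)\,dt \le I_i$, and the Minkowski-content coarea inequality~\eqref{eq:coarea with Minkowski content} applied to $w_i$ gives $\int_0^1 \mu^+(\partial\{w_i > t\})\,dt \le I_i$. Chebyshev's inequality bounds the Lebesgue measures of the corresponding ``bad'' level sets by $1-\delta$ and $\delta/4$ respectively, while the set $\{t : \mu(\{w_i = t\}) > 0\}$ is countable. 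Consequently the good levels occupy Lebesgue measure $\ge 3\delta/4$ in $(0,1)$, and after excluding the two end zones of total length $\delta/2$ we still retain a set of good levels in $[\delta/4, 1-\delta/4]$ of measure $\ge \delta/4 > 0$. The most delicate point is precisely this two-sided balancing: the end margin $\delta/4$ is chosen so that the three Chebyshev-type constraints can be met simultaneously for every $\delta \in (0,1)$.

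With $E_i := \{w_i > t_i\}$ so selected, the four conclusions are immediate. Condition (c) together with $\partial E_i \subset \{w_i = t_i\}$ gives $\mu(\partial E_i) = 0$; conditions (a) and (b), combined with $I_i \to P(E, X)$, yield the two $\limsup$ inequalities. For the $L^1$ convergence, the pointwise estimate
\[
|w_i - \ch_E| \ge \min\{t_i, 1-t_i\}\, \ch_{E_i \Delta E} \ge \frac{\delta}{4}\, \ch_{E_i \Delta E},
\]
which holds because on $E_i \setminus E$ we have $|w_i - \ch_E| = w_i > t_i$ while on $E \setminus E_i$ we have $|w_i - \ch_E| = 1 - w_i \ge 1 - t_i$, integrates to $\mu(E_i \Delta E) \le (4/\delta)\, \|w_i - \ch_E\|_{L^1(X)} \to 0$, which is the desired $L^1$ convergence of $\ch_{E_i}$ to $\ch_E$.
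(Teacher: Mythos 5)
Your overall plan coincides with the paper's: approximate $\ch_E$ in $L^1(X)$ by locally Lipschitz functions taking values in $[0,1]$, use the two coarea inequalities together with Chebyshev and the countability of $\{t:\mu(\{w_i=t\})>0\}$ to pick a good level $t_i\in[\delta/4,1-\delta/4]$, set $E_i:=\{w_i>t_i\}$, and close with the pointwise estimate $|w_i-\ch_E|\ge \tfrac{\delta}{4}\ch_{E_i\Delta E}$. The level-selection arithmetic and the $L^1$ step are both correct and match the paper's.

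There is, however, a genuine gap in the claim $I_i:=\int_X\Lip w_i\,d\mu\to P(E,X)$. Lemma~\ref{lem:L1 loc and L1 convergence} gives a sequence with $w_i-\ch_E\to 0$ in $L^1(X)$ and $\int_X g_{w_i}\,d\mu\to P(E,X)$ for the \emph{minimal $1$-weak upper gradients} $g_{w_i}$, not for $\Lip w_i$. Lower semicontinuity only yields $\liminf_i I_i\ge P(E,X)$; it does not supply the needed upper bound $\limsup_i I_i\le P(E,X)$. All one has in general is $g_{w_i}\le \Lip w_i\le C\,g_{w_i}$, so $\limsup_i I_i$ could be as large as $C\,P(E,X)$. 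This matters because your first Chebyshev inequality reads $P(E_i,X)\le(1-\delta)^{-1}I_i$, and passing to $\limsup$ would only give $(1-\delta)^{-1}C\,P(E,X)$, whereas the lemma asserts the sharp factor $(1-\delta)^{-1}$ with no extra constant. To close the gap, do as the paper does: apply the BV coarea formula against $J_i:=\int_X g_{w_i}\,d\mu$ (which does converge to $P(E,X)$ via Lemma~\ref{lem:L1 loc and L1 convergence}), apply the Minkowski-content coarea inequality against $I_i$, and use $I_i\le C\,J_i$ only in the second bound. Alternatively one could genuinely obtain $I_i\to P(E,X)$ by invoking the Ambrosio--Di Marino equivalence mentioned after the definition of total variation and then re-proving Lemma~\ref{lem:L1 loc and L1 convergence} with $\Lip$ in place of $g$, but that requires redoing the Leibniz-rule step and is not what you have written.
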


\begin{proof}
By Lemma~\ref{lem:L1 loc and L1 convergence}, we can pick a sequence
$(v_i)\subset\liploc(X)$
with $v_i- \ch_E\to 0$ in $L^1(X)$ and $\int_X g_{v_i}\,d\mu\to P(E,X)$,
where each $g_{v_i}$ is the minimal $1$-weak upper gradient of $v_i$. We may also choose the functions so that $v_{i}\geq 0$.
Furthermore, $\Lip v_i\le C g_{v_i}$ $\mu$-almost everywhere,
see~\cite[Proposition 4.26]{Che} or~\cite[Proposition 13.5.2]{HKST}. 
According to the coarea formula for $\BV$ functions, see~\eqref{eq:coarea},
for every $i\in\N$ we have
\[
\int_0^1 P(\{v_i>t\},X)\,dt\le \int_X g_{v_i}\,d\mu.
\]
Now by Chebyshev's inequality,
\[
\mathcal L^1\left(\left\{t\in [0,1]:\, P(\{v_i>t\},X)>(1-\delta)^{-1}\int_X g_{v_i}\,d\mu \right\}\right)
\le 1-\delta;
\]
note that this holds also if $\int_X g_{v_i}\,d\mu=0$, as then $P(\{v_i>t\},X)=0$ for a.e. $t\in [0,1]$.
Therefore there is a measurable set
$A_i\subset [\delta/4,1-\delta/4]$ with $\mathcal L^1(A_i)\ge \delta/2$ and 
\[
P(\{v_i>t\},X)\le \frac{1}{1-\delta}\int_X g_{v_i}\,d\mu
\]
for all $t\in A_i$.
Moreover, since the sets $\partial\{v_i>t\}\subset \{v_i=t\}$ are disjoint for distinct values of $t$, 
we have $\mu(\partial \{v_i>t\})=0$ for a.e. $t\in [0,1]$.
By the version of the coarea formula found in~\eqref{eq:coarea with Minkowski content}, we have
\[
\int_0^1\mu^+(\partial\{v_i>t\})\,dt\le \int_X \Lip v_i\,d\mu.
\]
Thus for each $i\in\N$, there exists $t_i\in A_i$ with
\[
\mu^+(\partial\{v_i>t_i\})\le \frac{2}{\delta}\int_X \Lip v_i\,d\mu
\le \frac{C}{\delta}\int_X g_{v_i}\,d\mu
\]
and $\mu(\partial \{v_i>t_i\})=0$.
Define $E_i:=\{v_i>t_i\}$. Then
\[
\limsup_{i\to\infty}P(E_i,X)\le (1-\delta)^{-1}P(E,X),
\]
and
\[
\limsup_{i\to\infty}
\mu^+(\partial E_i)\le \frac{C}{\delta} P(E,X). 
\]
Now we need to show that $\ch_{\{v_i>t_i\}}- \ch_E\to 0$ in $L^1(X)$.
Note that for any $t\in [\delta/4,1-\delta/4]$, 
for any $x$ such that 
\[
x\in \{v_i>t\}\cap E\quad\text{ or  }\quad x\in X\setminus (\{v_i>t\}\cup E),
\]
we have 
\[
|\ch_{\{v_i>t\}}(x)-\ch_{E}(x)|=0\le 4|v_i(x)-\ch_E(x)|/\delta. 
\]
If $x\in\{v_i>t\}$ but $x\not\in E$,
then 
\[
|\ch_{\{v_i>t\}}(x)-\ch_{E}(x)|=1\le v_i(x)/t\le 4|v_i(x)-\ch_E(x)|/\delta.
\]
On the other hand, if
$x\not\in\{v_i>t\}$ but $x\in E$, then
$v_{i}(x)\leq t\le 1- \delta/4$
and it follows that
\[
|\ch_{\{v_i>t\}}(x)-\ch_{E}(x)|=1\le 4|v_i(x)-\ch_E(x)|/\delta.
\]
Thus
\[
\int_X |\ch_{\{v_i>t_i\}}-\ch_{E}|\,d\mu\le \frac{4}{\delta}\int_X |v_i-\ch_{E}|\,d\mu\to 0
\]
so that $\ch_{\{v_i>t_i\}}- \ch_E\to 0$ in $L^1(X)$ (even though $t_i$ depends on $i$).
\end{proof}

The reason for utilizing the Minkowski content is that it scales nicely according to the
parameter $\kappa$ in $\mu_{\kappa}$, in the following sense.

\begin{lemma}\label{lem:kappa and Minkowski content}
Let $\Omega\subset X$ be $\mu$-measurable,
let $A\subset X\setminus \Omega$, let $\beta>0$, and suppose that there is some $R>0$ for which
\begin{equation}\label{eq:exterior measure density condition with uniform constant for small radii}
\inf_{0<r<R}\frac{\mu(B(x,r)\setminus \Omega)}{\mu(B(x,r))}\ge \frac{\beta}{2}
\end{equation}
for every $x\in A$.
Then we have
\[
C\mu^+_{\kappa}(A)\ge  \kappa \beta \mu^+(A).
\]
\end{lemma}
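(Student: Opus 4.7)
My plan is to compare $\mu_\kappa(A_R)$ with $\mu(A_R)$ on the $R$-neighborhood $A_R := \bigcup_{x\in A} B(x,R)$, and then pass to the liminf defining the Minkowski content. The main idea is that the exterior density assumption forces a uniform fraction of $A_R$ to lie outside $\Omega$, where $\mu_\kappa$ weighs $\mu$ by a factor of $\kappa$.

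First I would restrict to scales $0 < R < R_0/6$ (where $R_0$ comes from the hypothesis that~\eqref{eq:exterior measure density condition with uniform constant for small radii} holds on $(0,R_0)$) and apply a standard $5R$-covering theorem to the cover $\{B(x,R)\}_{x\in A}$ of $A$. This yields a countable disjoint subfamily $B(x_i,R)$, with $x_i\in A$, such that $A \subset \bigcup_i B(x_i,5R)$; since $A_R$ consists of balls of radius $R$ around points of $A$, the triangle inequality gives $A_R \subset \bigcup_i B(x_i,6R)$. Doubling then yields
\[
\mu(A_R) \;\le\; \sum_i \mu(B(x_i,6R)) \;\le\; C_d^{3}\sum_i \mu(B(x_i,R)).
\]

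Next I would apply the hypothesis~\eqref{eq:exterior measure density condition with uniform constant for small radii} to each center $x_i\in A$ with radius $R<R_0$ to obtain $\mu(B(x_i,R)\setminus\Omega)\ge (\beta/2)\,\mu(B(x_i,R))$. Since the balls $B(x_i,R)$ are disjoint and contained in $A_R$, summing gives
\[
\mu(A_R\setminus\Omega) \;\ge\; \sum_i \mu(B(x_i,R)\setminus\Omega) \;\ge\; \tfrac{\beta}{2}\sum_i \mu(B(x_i,R)) \;\ge\; \tfrac{\beta}{2C_d^{3}}\,\mu(A_R).
\]
Combining with $\mu_\kappa(A_R) = \mu(A_R\cap\Omega)+\kappa\,\mu(A_R\setminus\Omega)\ge \kappa\,\mu(A_R\setminus\Omega)$ gives $\mu_\kappa(A_R)\ge \kappa\beta\,(2C_d^{3})^{-1}\,\mu(A_R)$ for all sufficiently small $R$.

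Finally, dividing by $2R$ and taking the liminf as $R\to 0^+$ yields $\mu^+_\kappa(A)\ge \kappa\beta\,(2C_d^{3})^{-1}\,\mu^+(A)$, which is the claim with $C=2C_d^{3}$. I do not expect any serious obstacle here: the only subtlety is remembering that the density hypothesis controls balls centered at $A$, so the covering must be chosen with centers in $A$ (as provided by the $5R$-covering theorem), and that only sufficiently small $R$ may be used, but this is harmless when passing to the liminf.
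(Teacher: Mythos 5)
Your proof is correct and takes essentially the same approach as the paper: both use the $5r$-covering theorem to extract a disjoint family of balls centered in $A$, then combine doubling, disjointness, and the exterior-density hypothesis to compare $\mu_\kappa(A_R)$ with $\mu(A_R)$ before passing to the liminf. The only cosmetic difference is that you explicitly isolate $\mu(A_R\setminus\Omega)$ as an intermediate quantity, whereas the paper folds the same estimate into a single chain of inequalities via $\mu_\kappa(B(x_j,r))$.
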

\begin{proof}
For any $x\in A$ and radii $r\in (0,R)$,
\[
\mu_{\kappa}(B(x,r))\ge \mu_{\kappa}(B(x,r)\setminus\Omega)=\kappa\mu(B(x,r)\setminus\Omega)
\ge \frac{\kappa\beta}{2}\mu(B(x,r)).
\]
Fix $0<r<R/5$ and consider the collection of balls $\{B(x,r)\}_{x\in A}$. By the 5-covering theorem we can pick a countable collection of disjoint balls
$B(x_j,r)$ such that the balls $B(x_j,5r)$ cover $\bigcup_{x\in A}B(x,r)$. We have
\begin{align*}
\frac{\mu\left(\bigcup_{x\in A}B(x,r)\right)}{2r}
\le \sum_{j\in\N}\frac{\mu(B(x_j,5r))}{2r}
&\le C_d^3\sum_{j\in\N}\frac{\mu(B(x_j,r))}{2r}\\
&\le \frac{2C_d^3}{\kappa\beta}\sum_{j\in\N}\frac{\mu_{\kappa}(B(x_j,r))}{2r}\\
&\le \frac{2C_d^3}{\kappa\beta}\frac{\mu_{\kappa}\left(\bigcup_{x\in A}B(x,r)\right)}{2r}.
\end{align*}
By taking the limit infimum as $r\to 0$ on both sides, we obtain
\[
\mu^+(A)\le \frac{2C_d^3}{\kappa\beta} \mu_{\kappa}^+(A).
\]
\end{proof}

Moreover, we have the following simple estimate for the Minkowski content and Hausdorff measure that 
we will need in the proof of Proposition~\ref{prop:comparison of P plus and P kappa}. 
The estimate can be proved by a simple covering argument, see~\cite[Proposition~3.12]{KoLa}.

\begin{lemma}\label{lem: Hausdorff and Minkowski}
For any $A\subset X$, we have $\mathcal H(A)\le C_d^3\mu^+(A)$.
\end{lemma}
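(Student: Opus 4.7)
The plan is to control the restricted Hausdorff content $\mathcal H_R(A)$ by the measure of an $(R/2)$-neighborhood of $A$ using a simple maximal-disjoint-ball construction, and then to send $R\to 0^+$ to compare $\mathcal H(A)$ with $\mu^+(A)$.

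For fixed $R>0$, I would select a maximal disjoint collection $\{B(x_i,R/2)\}_{i}$ of balls centered in $A$ (existence is immediate from Zorn's lemma, or from the $5$-covering theorem applied to $\{B(x,R/2):x\in A\}$). By maximality, every $x\in A$ satisfies $B(x,R/2)\cap B(x_i,R/2)\ne\emptyset$ for some $i$, whence $d(x,x_i)<R$ and so $x\in B(x_i,R)$. Consequently the enlarged family $\{B(x_i,R)\}_i$ covers $A$, and it is admissible in the definition of the restricted Hausdorff content. Using it as a cover and applying the doubling property once yields
\[
\mathcal H_R(A)\le \sum_i \frac{\mu(B(x_i,R))}{R}\le C_d\sum_i \frac{\mu(B(x_i,R/2))}{R}.
\]
Since the balls $B(x_i,R/2)$ are pairwise disjoint and each contained in $\bigcup_{x\in A}B(x,R/2)$, their measures sum to at most $\mu\bigl(\bigcup_{x\in A}B(x,R/2)\bigr)$. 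Writing $R=2r$, this gives
\[
\mathcal H_{2r}(A)\le C_d\,\frac{\mu\bigl(\bigcup_{x\in A}B(x,r)\bigr)}{2r}.
\]

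Passing to the limit as $r\to 0^+$, the left-hand side converges to $\mathcal H(A)$ by the definition of the codimension~$1$ Hausdorff measure, while the $\liminf$ of the right-hand side is $C_d\mu^+(A)$ by~\eqref{eq:definition of Minkowski content}. Since $C_d\ge 1$, this delivers the asserted bound $\mathcal H(A)\le C_d\mu^+(A)\le C_d^3\mu^+(A)$. The argument is routine; there is no serious obstacle, only some care in bookkeeping the doubling factor and the scaling $R=2r$ that reconciles the Hausdorff-content scale with the $2R$ normalization in the definition of $\mu^+$.
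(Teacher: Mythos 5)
Your proof is correct and is precisely the ``simple covering argument'' the paper alludes to (it cites \cite[Proposition~3.12]{KoLa} rather than supplying its own proof). Your maximal-disjoint-ball construction, which needs only a single doubling of the radius, even yields the sharper constant $C_d\le C_d^3$.
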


It is less clear 
that this estimate would hold if we 
used a ``one-sided" definition of the Minkowski content, as for example in~\cite{AMG}.

\begin{lemma}\label{lem:lower semicontinuity for P+}
Let $\Omega\subset X$ be an open set. If $K_i\subset \Omega$, $i\in\N$, are compact sets
with $\ch_{K_i}-\ch_\Om\to 0$ in $L^1(X)$,
then
\[
P_+(\Omega,X)\le\liminf_{i\to\infty}P(K_i,X).
\]
\end{lemma}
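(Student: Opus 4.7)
The plan is to build, directly from Definition~\ref{def:p+}, an admissible sequence $(\Psi_i)\subset\liploc(X)$ vanishing on $X\setminus\Omega$ with $\Psi_i-\ch_\Omega\to 0$ in $L^1(X)$ and
\[
\liminf_{i\to\infty}\int_X g_{\Psi_i}\,d\mu\le\liminf_{i\to\infty}P(K_i,X).
\]
We may assume the right-hand side is finite (else there is nothing to prove) and, after passing to a subsequence, that $P(K_i,X)\to\liminf_{i\to\infty}P(K_i,X)$.

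For each fixed $i$, I would first use Lemma~\ref{lem:L1 loc and L1 convergence} to produce a sequence $(u_{i,j})_j\subset\liploc(X)$ with $u_{i,j}-\ch_{K_i}\to 0$ in $L^1(X)$ and $\int_X g_{u_{i,j}}\,d\mu\to P(K_i,X)$ as $j\to\infty$. These $u_{i,j}$ need not vanish outside $\Omega$, so I would next multiply by a Lipschitz cutoff. Since $K_i$ is compact and $\Omega$ is open, $r_i:=\dist(K_i,X\setminus\Omega)>0$, and I can fix $\eta_i\in\Lip(X)$ with $0\le\eta_i\le 1$, $\eta_i\equiv 1$ on $\{\dist(\cdot,K_i)\le r_i/2\}$, $\eta_i\equiv 0$ on $\{\dist(\cdot,K_i)\ge r_i\}$, and $g_{\eta_i}\le 2/r_i$. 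In particular $\eta_i$ is supported in a compact subset of $\Omega$, and hence $\eta_i u_{i,j}$ vanishes on $X\setminus\Omega$.

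The central step, which I expect to be the main (though not difficult) point to verify, is that the cutoff adds a negligible amount of upper gradient mass:
\[
\limsup_{j\to\infty}\int_X g_{\eta_i u_{i,j}}\,d\mu\le P(K_i,X).
\]
By the Leibniz rule from \cite[Lemma~2.18]{BB} one has $g_{\eta_i u_{i,j}}\le\eta_i g_{u_{i,j}}+|u_{i,j}|g_{\eta_i}\le g_{u_{i,j}}+(2/r_i)|u_{i,j}|\ch_{A_i}$, where $A_i:=\{r_i/2<\dist(\cdot,K_i)<r_i\}$ is bounded and disjoint from $K_i$. Since $\ch_{K_i}\equiv 0$ on $A_i$ and $u_{i,j}\to\ch_{K_i}$ in $L^1_\loc(X)$, the correction integral $(2/r_i)\int_{A_i}|u_{i,j}|\,d\mu$ vanishes as $j\to\infty$, giving the claim. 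Simultaneously, $\eta_i u_{i,j}\to\eta_i\ch_{K_i}=\ch_{K_i}$ in $L^1(X)$ as $j\to\infty$, since the supports stay inside the bounded set $\{\dist(\cdot,K_i)\le r_i\}$.

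Finally, for each $i$ I would choose $j(i)$ so large that $\Psi_i:=\eta_i u_{i,j(i)}$ satisfies both $\Vert\Psi_i-\ch_{K_i}\Vert_{L^1(X)}\le 1/i$ and $\int_X g_{\Psi_i}\,d\mu\le P(K_i,X)+1/i$. Then $\Psi_i\in\Lip(X)\subset\liploc(X)$, $\Psi_i\equiv 0$ on $X\setminus\Omega$, and by the triangle inequality together with the hypothesis $\ch_{K_i}\to\ch_\Omega$ in $L^1(X)$ we get $\Psi_i-\ch_\Omega\to 0$ in $L^1(X)$. Thus $(\Psi_i)$ is admissible in Definition~\ref{def:p+}, and
\[
P_+(\Omega,X)\le\liminf_{i\to\infty}\int_X g_{\Psi_i}\,d\mu\le\liminf_{i\to\infty}\bigl(P(K_i,X)+1/i\bigr)=\liminf_{i\to\infty}P(K_i,X),
\]
which is the desired inequality.
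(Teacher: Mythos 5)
Your proof is correct and leads to the same conclusion. The paper's proof is shorter: it simply invokes \cite[Lemma~2.6]{HaSh}, which directly hands over, for each compact $K_i\Subset\Omega$, a function $v_i\in\Lip_c(\Omega)$ with $\ch_{K_i}\le v_i\le 1$ and $\int_\Omega g_{v_i}\,d\mu\le P(K_i,\Omega)+1/i$; combined with $\ch_{K_i}\to\ch_\Omega$ in $L^1$, this makes $(v_i)$ directly admissible for $P_+(\Omega,X)$. You instead reconstruct that approximation tool from scratch: Lemma~\ref{lem:L1 loc and L1 convergence} to get $\liploc(X)$ approximants of $\ch_{K_i}$, a cutoff at distance $\sim\dist(K_i,X\setminus\Omega)$ to force vanishing on $X\setminus\Omega$, and the Leibniz rule to control the extra upper-gradient mass, showing the error term dies because $u_{i,j}\to\ch_{K_i}$ in $L^1$ on the annulus disjoint from $K_i$. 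Your route is more self-contained (no external reference) at the cost of a few more lines; the paper's route is more economical but opaque without consulting \cite{HaSh}. Two small cosmetic points: when $\Omega=X$ the cutoff is unnecessary (take $\eta_i\equiv 1$), and the conclusion $\eta_i u_{i,j}\to\ch_{K_i}$ in $L^1(X)$ already follows from $|\eta_i u_{i,j}-\eta_i\ch_{K_i}|\le|u_{i,j}-\ch_{K_i}|$ without appealing to boundedness of the support. Neither affects the validity of the argument.
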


\begin{proof}
By~\cite[Lemma 2.6]{HaSh}, for each $i\in\N$ we can find a function $v_i\in\Lip_c(\Om)$
such that $\Vert v_i-\ch_{\Om}\Vert_{L^1(\Omega)}<1/i$ and
\[
\int_\Om g_{v_i}\,d\mu\le P(K_i,\Om)+1/i.
\]
The conclusion follows by the definition of $P_+(\Om,\cdot)$.
\end{proof}

\begin{proposition}\label{prop:comparison of P plus and P kappa}
Let $\Omega\subset X$ be a bounded open set with $P(\Om,X)<\infty$, and assume that for some constant $\beta>0$, we have
\begin{equation}\label{eq:exterior measure density condition with uniform constant}
\liminf_{r\to 0^+}\frac{\mu(B(x,r)\setminus \Omega)}{\mu(B(x,r))}\ge \beta
\end{equation}
for $\mathcal H$-a.e. $x\in\partial\Omega$.
Then we have
\[
P_{+}(\Omega,X)= \lim_{\kappa\to\infty}P_{\kappa}(\Omega,X).
\]
\end{proposition}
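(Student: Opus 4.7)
The plan is to establish, separately, the two inequalities $P_\kappa(\Om,X)\le P_+(\Om,X)$ (uniformly in $\kappa\ge 1$) and $\liminf_{\kappa\to\infty}P_\kappa(\Om,X)\ge P_+(\Om,X)$. Since $\mu_\kappa$ is non-decreasing in $\kappa$, so is $P_\kappa(\Om,X)$, and the limit $L := \lim_{\kappa\to\infty}P_\kappa(\Om,X)$ exists in $[0,\infty]$.

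For the upper bound I would take $(\Psi_i)\subset\liploc(X)$ with $\Psi_i=0$ on $X\setminus\Om$, $0\le\Psi_i\le 1$, $\Psi_i\to\ch_\Om$ in $L^1(\mu)$, and $\int_X g_{\Psi_i}\,d\mu\to P_+(\Om,X)$, and then replace each $\Psi_i$ by $\widetilde\Psi_i := \max(\Psi_i-\delta_i,0)$ with $\delta_i\searrow 0$. Since $\widetilde\Psi_i$ is identically zero on the open set $\{\Psi_i<\delta_i\}\supset X\setminus\Om$, the composition rule yields $g_{\widetilde\Psi_i}\le g_{\Psi_i}\ch_{\{\Psi_i>\delta_i\}}$, which vanishes $\mu$-a.e.\ outside $\Om$. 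Hence
\[
\int_X g_{\widetilde\Psi_i}\,d\mu_\kappa = \int_\Om g_{\widetilde\Psi_i}\,d\mu \le \int_X g_{\Psi_i}\,d\mu
\]
independently of $\kappa$, and letting $i\to\infty$ gives $P_\kappa(\Om,X)\le P_+(\Om,X)$.

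For the lower bound I would pick $\kappa_n\to\infty$ and $P_{\kappa_n}$-approximators $(v_n)\subset\liploc(X)$, $0\le v_n\le 1$, with $v_n\to\ch_\Om$ in $L^1(\mu_{\kappa_n})$ and $\int_X g_{v_n}\,d\mu_{\kappa_n}\to L$ (finite by the upper bound and Theorem~\ref{thm:P+vsP}), and aim to extract compact sets $K_n\Subset\Om$ with $\ch_{K_n}\to\ch_\Om$ in $L^1(\mu)$ and $\limsup_n P(K_n,X)\le L$; Lemma~\ref{lem:lower semicontinuity for P+} will then conclude. Fixing $\delta\in(0,1/3)$ and applying the $\BV$ coarea formula in $(X,\mu_{\kappa_n})$ together with Chebyshev's inequality produces a level $t_n\in(\delta,1-\delta)$ with $P_{\kappa_n}(E_n,X)\le(1-2\delta)^{-1}\int g_{v_n}\,d\mu_{\kappa_n}$, where $E_n:=\{v_n>t_n\}$; meanwhile $L^1(\mu_{\kappa_n})$-convergence forces $\kappa_n\int_{X\setminus\Om}v_n\,d\mu\to 0$, so $\mu(E_n\setminus\Om)=O(1/\kappa_n)$ and $\mu(E_n\triangle\Om)\to 0$. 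To pass from $E_n$ to a subset of $\Om$, I plan to apply Lemma~\ref{lem:approximation by regular sets} to regularize $E_n$ (obtaining $\mu$-negligible boundary together with a Minkowski-content bound on $\partial E_n$), then use Lemma~\ref{lem:kappa and Minkowski content}, which via the exterior density condition \eqref{eq:exterior measure density condition with uniform constant} amplifies the $\mu_{\kappa_n}$-Minkowski content of $\partial E_n\cap(X\setminus\Om)$ by a factor $\gtrsim\kappa_n\beta$ over its $\mu$-content; boundedness of $P_{\kappa_n}(E_n,X)$ then forces the $\mu$-Minkowski content of this piece to vanish. Finally I would intersect with $\bar\Om_{\eta_n}$ for a sequence $\eta_n\to 0$ chosen via the coarea estimate of Lemma~\ref{lem:coarea inequality} applied to $\dist(\cdot,X\setminus\Om)$, so that the boundary trimming contributes only $o(1)$ to the perimeter, and let $\delta\to 0$ at the end.

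The hard part will be the lower bound: arranging that the compact approximators $K_n\Subset\Om$ retain a sharp perimeter bound after the trimming step. The exterior density hypothesis, applied through Lemma~\ref{lem:kappa and Minkowski content}, is exactly what prevents $\partial E_n$ from lingering just outside $\Om$, and the thin collar $\bar\Om_{\eta_n}\subset\Om$ must be selected delicately so as not to introduce excessive new perimeter.
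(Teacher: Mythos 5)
Your strategy for the lower bound is broadly the same as the paper's: regularize via Lemma~\ref{lem:approximation by regular sets} to get sets with $\mu$-negligible boundary and a Minkowski content bound, use Lemma~\ref{lem:kappa and Minkowski content} together with the exterior density condition~\eqref{eq:exterior measure density condition with uniform constant} to make the ``overhang'' outside $\Om$ decay like $1/\kappa$, and finish via Lemma~\ref{lem:lower semicontinuity for P+}. (Your preliminary coarea-plus-Chebyshev extraction of level sets $E_n$ of the $P_{\kappa_n}$-approximators is essentially already carried out internally by Lemma~\ref{lem:approximation by regular sets}, which the paper applies directly to $\Om$ in the $\mu_\kappa$-metric measure space; this is only a minor redundancy. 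The upper bound $P_\kappa\le P_+$ is fine, indeed a bit over-engineered: $\Psi_i=0$ on $X\setminus\Om$ already forces $g_{\Psi_i}=0$ $\mu$-a.e.\ there, so no $\delta$-cutoff is needed.)

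The genuine gap is the final trimming step. Intersecting with $\overline{\Om}_{\eta_n}$ is a purely geometric truncation whose cost is, up to measure-theoretic corrections, $P(\Om_{\eta_n},I_{E_n})$, and since $E_n\to\Om$ in $L^1(\mu)$ this is essentially $P(\Om_{\eta_n},X)$. The coarea inequality of Lemma~\ref{lem:coarea inequality} applied to $\dist(\cdot,X\setminus\Om)$ only yields that $\mathcal H(\partial\Om_t)$ is \emph{finite} for a.e.\ $t>0$; it gives no reason for $\mathcal H(\partial\Om_{\eta_n})$ (or $P(\Om_{\eta_n},X)$) to tend to zero, nor to stay below $P(\Om,X)$ — lower semicontinuity of perimeter goes the wrong way here. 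Under the hypotheses of the proposition ($P(\Om,X)<\infty$ and exterior density $\mathcal H$-a.e.), there is no a priori control of $\limsup_{\eta\to0}P(\Om_\eta,X)$ by $P(\Om,X)$; $\partial\Om_\eta$ can wiggle around $\mathcal H$-negligible but geometrically intricate pieces of $\partial\Om$ and pick up extra perimeter that does not vanish. So the claim ``the boundary trimming contributes only $o(1)$'' is precisely the statement that needs proof and is not delivered by your plan.

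What the paper does instead is decidedly sharper and is the main idea missing from your argument. Having reduced to an approximant $\Om_i$ with $\mu(\partial\Om_i)=0$, bounded $\mu_\kappa^+(\partial\Om_i)$, and $P(\overline{\Om_i}\setminus\Om,X)\lesssim(\kappa\beta\delta)^{-1}P_+(\Om,X)$, one splits $\overline{\Om_i}\setminus\Om$ into its $\beta$-high-density part and the complement. The high-density part $(\overline{\Om_i}\setminus\Om)^{\beta}$ is covered by balls coming from the \emph{boxing inequality} with total codimension-one content $\lesssim\beta^{-1}P(\overline{\Om_i}\setminus\Om,X)$; the remaining thin part, after an Egorov-type uniformization of the exterior density and the bound $\mathcal H(\partial\Om_i\setminus\Om)\lesssim(\kappa\beta)^{-1}\mu_\kappa^+(\partial\Om_i)$, is covered by balls of small total content. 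Deleting these balls produces compact $K_i\Subset\Om$ whose extra perimeter is only $O(1/(\kappa\beta^2\delta))P_+(\Om,X)$, unlike the uncontrolled $P(\Om_{\eta_n},X)$. This targeted ball cover — not a uniform geometric collar — is what makes the argument close. If you want to salvage your version, you would need to replace the intersection with $\overline{\Om}_{\eta_n}$ by a removal of balls covering $\overline{E_n}\setminus\Om$, with the ball family obtained from the boxing inequality on the positive-density part and a Hausdorff-content cover on the remainder, exactly as in the paper's proof.
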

\begin{proof}
By Theorem~\ref{thm:P+vsP} we have $P_+(\Om,X)<\infty$.
Note that for any $\kappa>0$ we have $P_{+}(\Omega,X)\ge P_{\kappa}(\Omega,X)$, 
so only the other inequality needs to be proved.
Fix  $0<\delta<1$, and fix $\kappa>0$. By Lemma~\ref{lem:approximation by regular sets} 
we can find a sequence of open sets $\Omega_i\subset X$ of finite perimeter such that
$\ch_{\Omega_i}\to\ch_{\Omega}$ in $L^1(X)$,
$\mu(\partial\Omega_i)=0$ for all $i\in\N$,
\begin{equation}\label{eq:perimeters of Omega i and Omega}
\limsup_{i\to\infty}P_{\kappa}(\Omega_i,X)\le (1-\delta)^{-1} P_{\kappa}(\Omega,X),
\end{equation}
and
\begin{equation}\label{eq:Minkowski contents of Omega i}
\limsup_{i\to\infty}\mu^+_{\kappa}(\partial\Omega_i)\le \frac{C}{\delta} P_{\kappa}(\Omega,X)\le \frac{C}{\delta} P_{+}(\Omega,X).
\end{equation}
In the following, we will repeatedly use the measure property and the 
subadditivity property~\eqref{eq:Caccioppoli sets form an algebra} of sets of finite perimeter.
Since $\ch_{\Om_i}\to\ch_{\Om}$ in $L^1(X)$, it follows that
$\ch_{\Om\cup\Om_i}\to\ch_\Om$ in $L^1(X)$ as well.
By the lower semicontinuity of perimeter
and the fact that the perimeter of a set is
concentrated on its measure theoretic boundary,
we estimate 
\begin{align*}
P(\Omega,X)
&\le \liminf_{i\to\infty} P(\Omega\cup\Omega_i,X)\\
&= \liminf_{i\to\infty} P(\Omega\cup\Omega_i,X\setminus (I_{\Omega_i}\cup I_{\Omega}))\\
&\le \liminf_{i\to\infty} \Big(P(\Omega,X\setminus (I_{\Omega_i}\cup I_{\Omega}))
    +P(\Omega_i,X\setminus (I_{\Omega_i}\cup I_{\Omega}))\Big)\\
&\le \liminf_{i\to\infty} \Big(P(\Omega,X\setminus I_{\Omega_i})+P(\Omega_i,X\setminus  I_{\Omega})\Big).
\end{align*}
It follows that
\begin{equation}\label{eq:lower semicontinuity estimate}
P(\Omega,I_{\Omega_i})\le P(\Omega_i,X\setminus \Omega)+\eps_i,
\end{equation}
where $\eps_i\to 0$ as $i\to\infty$.
For any sets $A,B\subset X$, we have
\[
\partial^*(A\cap B)\subset (\partial^*A\cap \partial^*B)\cup (\partial^*A\cap I_B)
\cup (\partial^*B\cap I_A).
\]
Thus we have
\begin{align*}
\partial^*(\Omega_i\setminus \Omega)
&\subset (\partial^*\Omega_i\cap \partial^*\Omega)
\cup (\partial^*\Omega_i\cap O_{\Omega})\cup (\partial^*\Omega\cap I_{\Omega_i})\\
&\subset(\partial^*\Omega_i\setminus \Omega)\cup (\partial^*\Omega\cap I_{\Omega_i}).
\end{align*}
By~\eqref{eq:Caccioppoli sets form an algebra}, $P(\Omega_i\setminus\Omega,X)<\infty$,
and then by using~\eqref{eq:def of theta}, we obtain
\begin{align*}
P(\Omega_i\setminus\Omega, X)
&\le C\mathcal H(\partial^*(\Omega_i\setminus \Omega))\\
&\le C\mathcal H(\partial^*\Omega_i\setminus \Omega)+C\mathcal H(\partial^*\Omega\cap I_{\Omega_i})\\
&\le CP(\Omega_i,X\setminus\Omega)+CP(\Omega,I_{\Omega_i}).
\end{align*}
Combining this with~\eqref{eq:lower semicontinuity estimate}, we obtain that for all $i\in\N$
\begin{equation}\label{eq:first estimate for difference of Omegas}
P(\Omega_i\setminus\Omega,X)\le C P(\Omega_i,X\setminus \Omega)+C\eps_i.
\end{equation}
Note that $\mathcal{H}\vert_{\partial\Om_i}$ is a Borel measure of finite mass,
since by Lemma~\ref{lem: Hausdorff and Minkowski},
\[
\mathcal H(\partial \Omega_i)\le C\mu^+(\partial \Omega_i)\le C\mu_{\kappa}^+(\partial \Omega_i)<\infty.
\]
Note that for any fixed $r>0$,
the map 
\[x\mapsto \frac{\mu(B(x,r)\setminus\Om)}{\mu(B(x,r))}
\]
is a Borel map as the ratio of two lower semicontinuous functions. Hence for each $\tau>0$
the function $f_\tau$ given by 
\[
f_\tau(x)=\inf_{r\in \Q\cap(0,\tau)}\frac{\mu(B(x,r)\setminus\Om)}{\mu(B(x,r))}
\]
is also Borel measurable,
and so is
\[
f_\infty(x):=\liminf_{r\to0^+}\frac{\mu(B(x,r)\setminus\Om)}{\mu(B(x,r))}=\lim_{\tau\to 0^+}f_\tau(x).
\]
So for each $i\in\N$, by Egorov's theorem we can choose a set
$A_i\subset \partial \Omega_i$ with $\mathcal H(\partial \Omega_i\setminus A_i)<\eps_i$
and such that
$f_\tau\to f_\infty$ uniformly in $A_i$. Thus~\eqref{eq:exterior measure density condition with uniform constant for small radii} is satisfied for $A=A_i\setminus\Omega$ and some $R>0$.
By Lemma~\ref{lem: Hausdorff and Minkowski} and Lemma~\ref{lem:kappa and Minkowski content}, we get
\begin{equation}\label{eq:estimate for kappa Hausdorff measure}
\begin{split}
\mathcal H(\partial\Omega_i\setminus\Omega)
&\le \mathcal H(A_i\setminus\Omega)+\eps_i\\
&\le C\mu^+(A_i\setminus\Omega)+\eps_i\\
&\le \frac{C}{\kappa\beta}\mu_{\kappa}^+(A_i\setminus\Omega)+\eps_i\\
&\le \frac{C}{\kappa\beta}\mu_{\kappa}^+(\partial\Omega_i)+\eps_i.
\end{split}
\end{equation}
Then by~\eqref{eq:def of theta}
\[
P(\Omega_i,X\setminus \Omega)
\le C\mathcal H(\partial\Omega_i\setminus\Omega)
\le \frac{C}{\kappa\beta}\mu_{\kappa}^+(\partial\Omega_i)+C\eps_i,
\]
so by combining with~\eqref{eq:first estimate for difference of Omegas}, we have
\[
P(\Omega_i\setminus\Omega,X)\le \frac{C}{\kappa\beta}\mu_{\kappa}^+(\partial\Omega_i)
+C\eps_i.
\]
Recall that $\mu(\overline{\Omega_i}\setminus \Omega_i)=\mu(\partial\Omega_i)=0$
for all $i\in\N$. Thus by~\eqref{eq:Minkowski contents of Omega i},
\begin{equation}\label{eq:estimate for protruding part}
\begin{split}
\limsup_{i\to\infty}P(\overline{\Omega_i}\setminus\Omega,X)
&=\limsup_{i\to\infty}P(\Omega_i\setminus\Omega,X)\\
&\le \limsup_{i\to\infty}\frac{C}{\kappa\beta}\mu_{\kappa}^+(\partial\Omega_i)\\
&\le \frac{C}{\kappa\beta\delta}P_{+}(\Omega,X).
\end{split}
\end{equation}
For $A\subset X$, we set 
\[
A^{\beta}:=\left\{x\in X:\, \limsup_{r\to 0^+}\frac{\mu(B(x,r)\cap A)}{\mu(B(x,r))}\ge\beta\right\}.
\]
Let us denote by $D\subset \partial \Omega$ the $\mathcal H$-negligible set where 
\eqref{eq:exterior measure density condition with uniform constant} fails.
Note that $\mu(X)>0$, and so we can assume that $\mu(\Om_i\setminus \Om)< \mu(X)/2$
for all $i\in\N$.
Now by the boxing 
inequality, see~\cite[Remark 3.3(1)]{KKST2},
we can find a collection of balls $\{B(x_j^i,r_j^i)\}_{j\in\N}$ covering
$(\overline{\Omega_i}\setminus\Omega)^{\beta}$ such that the balls $B(x_j^i,r_j^i/5)$
are disjoint,
\begin{equation}\label{eq:measure of balls from boxing inequality}
\frac{\mu(B(x_j^i,r_j^i/5)\cap \overline{\Omega_i}\setminus\Omega)}{\mu(B(x_j^i,r_j^i/5))}
\ge \frac{\beta}{C},
\end{equation}
and
\[
\sum_{j\in\N}\frac{\mu(B(x_j^i,r_j^i))}{r_j^i}
\le C \beta^{-1}P(\overline{\Omega_i}\setminus\Omega,X).
\]
Note that in~\cite{KKST2} it is assumed that $\mu(X)=\infty$,
but the condition $\mu(\Om_i\setminus \Om)< \mu(X)/2$ is sufficient for the proof to
work.
Then by~\eqref{eq:estimate for protruding part},
\begin{equation}\label{eq:first estimate for balls}
\limsup_{i\to\infty}\sum_{j\in\N}\frac{\mu(B(x_j^i,r_j^i))}{r_j^i}\le \frac{C}{\kappa\beta^2\delta}P_{+}(\Omega,X).
\end{equation}
Note that if $x\in I_{\Omega_i}\setminus( \Omega \cup D)$, then
$x\in (\overline{\Omega_i}\setminus\Omega)^{\beta}$.
Thus we have
\[
(\overline{\Omega_i}\setminus\Omega)\setminus (\overline{\Omega_i}\setminus\Omega)^{\beta} 
  \subset D\cup \partial\Omega_i\setminus \Omega.
\]
But by~\eqref{eq:estimate for kappa Hausdorff measure},
\[
\mathcal H(D\cup \partial\Omega_i\setminus \Om)=\mathcal H(\partial\Omega_i\setminus\Omega)
  \le \frac{C}{\kappa\beta}\mu_{\kappa}^+(\partial\Omega_i)+\eps_i.
\]
Thus we can pick another collection $\{B(y_k^i,s_k^i)\}_{k\in\N}$ of balls covering
$(\overline{\Omega_i}\setminus\Omega)\setminus (\overline{\Omega_i}\setminus\Omega)^{\beta}$ with $s_k^i\le 1/i$ and
\[
\sum_{k\in\N}\frac{\mu(B(y_k^i,s_k^i))}{s_k^i}
\le \frac{C}{\kappa\beta}\mu_{\kappa}^+(\partial\Omega_i)+2\eps_i,
\]
and so by~\eqref{eq:Minkowski contents of Omega i},
\begin{equation}\label{eq:second estimate for balls}
\limsup_{i\to\infty}\sum_{k\in\N}\frac{\mu(B(y_k^i,s_k^i))}{s_k^i}
\le \frac{C}{\kappa\beta\delta}P_{+}(\Omega,X).
\end{equation}
Note that the collections $\{B(x_j^i,r_j^i)\}_{j\in\N}$ and $\{B(y_k^i,s_k^i)\}_{k\in\N}$
together cover all of $\overline{\Omega_i}\setminus\Omega$.
By~\cite[Lemma 6.2]{KKST1} we can pick radii $\widetilde{r}_j^i\in [r_j^i,2r_j^i]$
such that
\[
\frac{1}{C}P(B(x_j^i,\widetilde{r}_j^i),X)\le \frac{\mu(B(x_j^i,\widetilde{r}_j^i))}{\widetilde{r}_j^i}\le C_d\frac{\mu(B(x_j^i,r_j^i))}{r_j^i},
\]
and similarly we find radii $\widetilde{s}_k^i\in [s_k^i,2s_k^i]$.
Define for each $i\in\N$
\[
K_i:=\overline{\Omega_i}\setminus 
  \left(\bigcup_{j\in\N}B(x_j^i,\widetilde{r}_j^i)\cup \bigcup_{k\in\N}B(y_k^i,\widetilde{s}_k^i)\right).
\]
Note that these are closed sets contained in $\Omega$, and thus compact.
By~\eqref{eq:first estimate for balls} and~\eqref{eq:second estimate for balls}, we have
\begin{align*}
&\limsup_{i\to\infty}P(K_i,X)\\
&\quad\le \limsup_{i\to\infty} \left( P(\overline{\Omega}_i,X)+\sum_{j\in\N}P(B(x_j^i,\widetilde{r}_j^i),X)
   +\sum_{k\in\N}P(B(y_k^i,\widetilde{s}_k^i),X)\right)\\
&\quad  \le \limsup_{i\to\infty} P_{\kappa}(\overline{\Omega}_i,X)+\frac{C}{\kappa
\beta^2\delta}P_{+}(\Omega,X)+\frac{C}{\kappa\beta\delta}P_{+}(\Omega,X)\\
&\quad  = \limsup_{i\to\infty} P_{\kappa}(\Omega_i,X)+\frac{C}{\kappa
\beta^2\delta}P_{+}(\Omega,X)+\frac{C}{\kappa\beta\delta}P_{+}(\Omega,X)\\
&\quad \le (1-\delta)^{-1}P_{\kappa}(\Omega,X)
  +\frac{C}{\kappa\beta^2\delta}P_{+}(\Omega,X),
\end{align*}
where we used~\eqref{eq:perimeters of Omega i and Omega} in the last step
(and absorbed a factor $2$ into the constant $C$).
By~\eqref{eq:measure of balls from boxing inequality} we have
\begin{align*}
\Vert \ch_{K_i}- \ch_{\Omega}\Vert_{L^1(X)}
&\le \sum_{j\in\N}\mu(B(x_j^i,\widetilde{r}_j^i))+
\sum_{k\in\N} \mu(B(y_k^i,\widetilde{s}_k^i))\\
&\le C\sum_{j\in\N}\mu(B(x_j^i,r_j^i/5))+
C_d\sum_{k\in\N} \mu(B(y_k^i,s_k^i))\\
&\le \frac{C}{\beta}\sum_{j\in\N}\mu(B(x_j^i,r_j^i/5)\cap \overline{\Omega_i}\setminus\Omega)+
\frac{C_d}{i}\sum_{k\in\N} \frac{\mu(B(y_k^i,s_k^i))}{s_k^i}\\
&\le \frac{C\mu(\overline{\Omega_i}\setminus\Omega)}{\beta}+
\frac{C_d}{i}\sum_{k\in\N} \frac{\mu(B(y_k^i,s_k^i))}{s_k^i},
\end{align*}
since the balls $B(x_j^i,r_j^i/5)$ are disjoint.
Now by the fact that $\ch_{\Om_i}\to \ch_{\Om}$ in $L^1(X)$ and
\eqref{eq:second estimate for balls},
we obtain $\ch_{K_i}\to \ch_{\Omega}$ in $L^1(X)$. Thus by Lemma~\ref{lem:lower semicontinuity for P+}
\begin{align*}
P_+(\Omega,X)
&\le \liminf_{i\to\infty}P(K_i,X)\\
&\le (1-\delta)^{-1}P_{\kappa}(\Omega,X)+ \frac{C}{\kappa\beta^2\delta}P_{+}(\Omega,X).
\end{align*}
Letting $\kappa\to\infty$ and then $\delta\to 0$, we obtain the result.
\end{proof}

\begin{corollary}\label{cor:comparison of P plus and P kappa}
Under the assumptions of Proposition~\ref{prop:comparison of P plus and P kappa},
for any Borel set $D\subset X$ we have
\[
P_+(\Omega,D)=\lim_{\kappa\to\infty}P_{\kappa}(\Omega,D).
\]
\end{corollary}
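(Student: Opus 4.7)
The plan is to bootstrap from the total-mass equality $\lim_{\kappa\to\infty}P_\kappa(\Om,X)=P_+(\Om,X)$ of Proposition~\ref{prop:comparison of P plus and P kappa} to all Borel sets, by combining it with a measure-theoretic domination $P_\kappa(\Om,\cdot)\le P_+(\Om,\cdot)$ valid for every $\kappa>0$. Once the domination is in hand, a standard complementarity argument yields the corollary: for any Borel $D\subset X$, the domination gives $\limsup_{\kappa\to\infty}P_\kappa(\Om,D)\le P_+(\Om,D)$, while
\begin{align*}
\liminf_{\kappa\to\infty}P_\kappa(\Om,D)
&\ge\lim_{\kappa\to\infty}P_\kappa(\Om,X)-\limsup_{\kappa\to\infty}P_\kappa(\Om,X\setminus D)\\
&\ge P_+(\Om,X)-P_+(\Om,X\setminus D)=P_+(\Om,D),
\end{align*}
forcing $\lim_{\kappa\to\infty}P_\kappa(\Om,D)=P_+(\Om,D)$.

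To establish the domination, I first prove $P_\kappa(\Om,U)\le P_+(\Om,U)$ for open $U\subset X$ and then extend to all Borel sets by the outer regularity of the finite Radon measures $P_\kappa(\Om,\cdot)$ and $P_+(\Om,\cdot)$. For the open-set inequality, fix $\eps>0$ and produce a sequence $(v_i)\subset\Lipc(U\cap\Om)$ with $v_i-\ch_\Om\to 0$ in $L^1(U,\mu)$ and
\[
\liminf_{i\to\infty}\int_U g_{v_i}\,d\mu\le P_+(\Om,U)+\eps,
\]
by adapting the construction behind Lemma~\ref{lem:lower semicontinuity for P+} (which rests on \cite[Lemma~2.6]{HaSh}) from the whole space $X$ to the open set $U$, while strengthening the support condition from $\Psi_i=0$ on $U\setminus\Om$ to compact support in $U\cap\Om$. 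Since $\mu_\kappa=\mu$ on $\Om$ and $\supp v_i\Subset U\cap\Om$, the minimal $1$-weak upper gradient of $v_i$ computed in $(X,d,\mu_\kappa)$ coincides $\mu$-a.e.\ with $g_{v_i}$ and vanishes outside a compact subset of $\Om$. Consequently $(v_i)$ is admissible for $P_\kappa(\Om,U)$ with identical integrals, yielding $P_\kappa(\Om,U)\le P_+(\Om,U)+\eps$; sending $\eps\to 0$ completes this step.

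The main obstacle is precisely the construction of test sequences $(v_i)$ that are compactly supported in $U\cap\Om$ rather than merely vanishing on $U\setminus\Om$. Without compact support, the integral of the minimal $1$-weak upper gradient of $v_i$ in $(X,d,\mu_\kappa)$ could pick up a contribution concentrated on $\partial\Om$ scaled by $\kappa$, destroying the domination. The flexibility needed to shrink supports away from $\partial\Om$ while keeping the total variation controlled is provided by the interior approximation machinery of Proposition~\ref{prop:approximation with Lip functions from inside}, which itself crucially exploits the exterior measure density hypothesis on $\Om$ imposed in Proposition~\ref{prop:comparison of P plus and P kappa}.
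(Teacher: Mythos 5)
Your overall strategy coincides with the paper's: both reduce the Borel-set equality to two ingredients, the total-mass convergence $\lim_{\kappa\to\infty}P_\kappa(\Om,X)=P_+(\Om,X)$ from Proposition~\ref{prop:comparison of P plus and P kappa}, and the monotonicity $P_\kappa(\Om,\cdot)\le P_+(\Om,\cdot)$, combined by a subtraction argument using finite additivity. The paper records the monotonicity as a one-line observation and then computes
$P_+(\Om,D)=P_+(\Om,X)-P_+(\Om,X\setminus D)=\lim_\kappa P_\kappa(\Om,X)-P_+(\Om,X\setminus D)\le\limsup_\kappa P_\kappa(\Om,D)$,
which together with $P_\kappa(\Om,D)\le P_+(\Om,D)$ yields the limit.

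However, your proposed way of establishing the monotonicity $P_\kappa(\Om,U)\le P_+(\Om,U)$ for open $U$ has a genuine gap. You want to produce admissible sequences $(v_i)\subset\Lip_c(U\cap\Om)$ with $v_i-\ch_\Om\to 0$ in $L^1(U,\mu)$ and $\liminf_i\int_U g_{v_i}\,d\mu\le P_+(\Om,U)+\eps$. This requirement of compact support inside $U\cap\Om$ is too strong. Take $U\Subset\Om$ a ball of positive perimeter; then $P_+(\Om,U)=0$ (the constant sequence $\Psi_i\equiv 1$ is admissible), yet any $(v_i)\subset\Lip_c(U)$ with $v_i\to 1$ in $L^1(U)$ converges, after extension by zero, to $\ch_U$ in $L^1(X)$, and therefore $\liminf_i\int_U g_{v_i}\,d\mu\ge\liminf_i\int_X g_{v_i}\,d\mu\ge P(U,X)>0$. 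The extra BV-energy you pay to bring $v_i$ down to zero near $\partial U\cap\Om$ cannot be made smaller than $P_+(\Om,U)+\eps$.

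The worry you use to motivate this construction — that the minimal $1$-weak upper gradient in $(X,d,\mu_\kappa)$ might "pick up a contribution concentrated on $\partial\Om$ scaled by $\kappa$" — is in fact a non-issue, which is why the paper does not discuss it. A test function $\Psi_i$ admissible for $P_+(\Om,U)$ satisfies $\Psi_i=0$ on $U\setminus\Om$, so by locality of minimal weak upper gradients (they vanish a.e.\ on level sets), $g_{\Psi_i}=0$ $\mu$-a.e.\ on $U\setminus\Om$. Since $d\mu_\kappa\le\max(1,\kappa)\,d\mu$ one has $\Mod_1^{\mu_\kappa}\le\max(1,\kappa)\Mod_1^\mu$, so $g_{\Psi_i}$ is also a $1$-weak upper gradient of $\Psi_i$ with respect to $\mu_\kappa$, whence $g_{\Psi_i}^{(\kappa)}\le g_{\Psi_i}$ a.e. Therefore
\[
\int_U g_{\Psi_i}^{(\kappa)}\,d\mu_\kappa\le\int_U g_{\Psi_i}\,d\mu_\kappa
=\int_{U\cap\Om} g_{\Psi_i}\,d\mu\le\int_U g_{\Psi_i}\,d\mu,
\]
and taking the infimum over admissible sequences gives $P_\kappa(\Om,U)\le P_+(\Om,U)$ directly, with no need to shrink supports away from either $\partial\Om$ or $\partial U$.
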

\begin{proof}
For any Borel set $A\subset X$ and any $\kappa>0$, we have 
$P_+(\Omega,A)\ge P_{\kappa}(\Omega,A)$. Thus by the measure property of
$P_+(\Om,\cdot)$ proved in the Appendix, and by
Proposition~\ref{prop:comparison of P plus and P kappa},
\begin{align*}
P_+(\Omega,D)
&=P_+(\Omega,X)-P_+(\Omega,X\setminus D)\\
&=\lim_{\kappa\to\infty}P_{\kappa}(\Omega,X)-P_+(\Omega,X\setminus D)\\
&\le \limsup_{\kappa\to\infty}\big(P_{\kappa}(\Omega,X)-P_{\kappa}(\Omega,X\setminus D)\big)\\
&= \lim_{\kappa\to\infty}P_{\kappa}(\Omega,D).
\end{align*}
\end{proof}

\begin{proof}[{End of the proof of Theorem~\ref{thm:dirichletT}}]
Note that by~\eqref{eq:exterior measure density condition in theorem consequence},
$\Omega$ satisfies the assumptions of Proposition
\ref{prop:comparison of P plus and P kappa} (and thus Corollary~\ref{cor:comparison of P plus and P kappa}) with $\beta=\kappa$.
By Cavalieri's principle,
Corollary~\ref{cor:comparison of P plus and P kappa},
and Lebesgue's monotone convergence theorem,
we have
\begin{align*}
&\int_{\partial^*\Omega}|T_+u-f|\, dP_{\kappa}(\Omega,\cdot)\\
&\qquad\qquad= \int_0^{\infty} P_{\kappa}(\Omega,\partial^*\Omega\cap \{|T_+u-f|>t\})\,dt\\
&\qquad\qquad\to \int_0^{\infty} P_{+}(\Omega,\partial^*\Omega\cap \{|T_+u-f|>t\})\,dt\qquad\textrm{as }\kappa\to\infty\\
&\qquad\qquad= \int_{\partial^*\Omega}|T_+u-f|\, dP_{+}(\Omega,\cdot).
\end{align*}
This proves~\eqref{eq:p-1-kappa-infty}, thus completing the proof
of Theorem~\ref{thm:dirichletT}.

\end{proof}

\begin{corollary}
Let $\Omega\subset X$ satisfy the assumptions of Theorem~\ref{thm:dirichletT}, and
let $f\in \Lip(X)$ be boundedly supported. Then the minimization problem $(T)$ of Definition~\ref{def:dirichlet} has a solution.
\end{corollary}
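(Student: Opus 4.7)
The proof should be a short assembly of results that have already been developed. The plan is to obtain the solution as an $L^1$-limit of $p$-harmonic functions with the prescribed Lipschitz boundary data, and then invoke Theorem~\ref{thm:dirichletT}.

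First, I would extend $f$ to a boundedly supported Lipschitz function on $X$; such a function lies in $N^{1,p}(X)$ for every $p\ge 1$. Since $\capa_1(X\setminus\Om)>0$ implies $\capa_p(X\setminus\Om)>0$ for every $p>1$ (as noted in Section~2), the direct method of the calculus of variations for the $p$-energy (cited after the definition of $p$-harmonic functions) produces, for each $p>1$, a $p$-harmonic function $u_p\in N^{1,p}(X)$ in $\Om$ with $u_p=f$ on $X\setminus\Om$.

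Next, I would apply Lemma~\ref{lem:bv embedding lemma} to the sequence $(u_p)_{p>1}$. That lemma yields a subsequence $p_k\to 1^+$ and a function $u\in \BV(X)$ such that $u_{p_k}\to u$ in $L^1(X)$, and in particular in $L^1(\Om)$. Now the hypotheses of Theorem~\ref{thm:dirichletT} are exactly those assumed in this corollary, so Theorem~\ref{thm:dirichletT} applies to the sequence $(u_{p_k})$ and guarantees that $u$ is a solution to the minimization problem~(T) of Definition~\ref{def:dirichlet}.

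There is no genuine obstacle here since all of the heavy lifting (the comparability $P_+\sim P$, the trace continuity under strict convergence, and the energy estimate~\eqref{eq:p goes to one for kappa} together with the limit~\eqref{eq:p-1-kappa-infty}) has already been carried out in the proof of Theorem~\ref{thm:dirichletT}. The only subtlety worth remarking on is that Theorem~\ref{thm:dirichletT} is stated for sequences $(u_p)_{p>1}$ with $u_p\to u$ in $L^1(\Om)$ as $p\to 1^+$, whereas Lemma~\ref{lem:bv embedding lemma} only yields convergence along a subsequence; this is not an issue since the conclusion of Theorem~\ref{thm:dirichletT} applies verbatim to any such convergent sequence, and the resulting limit $u$ is a solution to~(T), which is what was to be shown.
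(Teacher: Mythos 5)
Your proof is correct and follows essentially the same route as the paper: construct $p$-harmonic extensions of $f$ for each $p>1$, extract an $L^1$-convergent subsequence via Lemma~\ref{lem:bv embedding lemma}, and apply Theorem~\ref{thm:dirichletT} to the limit. The paper's proof is terser but identical in substance, and your observation that passing to a subsequence is harmless is exactly right.
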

\begin{proof}
For every $p>1$, there exists a \p-harmonic function $u_p$  in $\Om$ such that $u_p|_{X\setminus\Om}=f$. Then the result follows by combining Lemma~\ref{lem:bv embedding lemma} and Theorem~\ref{thm:dirichletT}.
\end{proof}

\section{Dirichlet Problems~(T), (B) and perturbation of the domain}

From the definition of $P_+(\Om,\cdot)$ it is clear that Problem~(T)
of Definition~\ref{def:dirichlet} is associated with approximating 
the bounded open set $\Om$ from inside. Moreover, if for each $k\in\N$ we have
$\Om_k\Subset\Om$
such that $\Om=\bigcup_{k\in\N}\Om_k$, and $v_{p_k}$ is the $p_k$-harmonic solution to the Dirichlet
problem on $\Om_k$ with boundary data $f$, then under reasonable hypotheses on $\Om$ we 
have $v_{p_k}\to u$ with $u$ a solution to Problem~(T) in $\Om$ with boundary data $f$. Indeed, suppose that
for each $p>1$ there are constants $C_p\ge 1$ and $\beta_p>0$ such that 
$\Om$ satisfies the condition that whenever $u_p$ is a $p$-harmonic solution to the Dirichlet problem on $\Om$
with boundary data $f$ we have 
\begin{equation}\label{eq:fat}
\osc_{\overline{\Om}\cap B(x,\rho)}u_p\le \osc_{\partial\Om\cap B(x,r)}f+C_p\left(\frac{\rho}{r}\right)^{\beta_p}
\end{equation}
for all $x\in\partial\Om$ and $0<\rho<r/2$. Let  $p_k$ be any sequence as obtained in 
Lemma~\ref{lem:bv embedding lemma}. Take a sequence $\eps_k\to 0$ as $k\to\infty$, 
and let $L>0$ such that $f$ is $L$-Lipschitz
continuous. For each $k\in\N$ we can fix $0<r_k<\diam(\Om)/2$ such that $4Lr_k<\eps_k$. Then
by~\eqref{eq:fat}, whenever $x\in\partial\Om$ and $0<\rho<r_k/2$ and $y\in B(x,\rho)\cap\Om$,
\begin{align*}
|u_{p_k}(y)-f(y)|&\le |u_{p_k}(y)-u_{p_k}(x)|+|f(x)-f(y)|\\
  &\le 2Lr_k+C_{p_k}\left(\frac{\rho}{r_k}\right)^{\beta_{p_k}}+Lr_k.
\end{align*}
We can then choose $0<\rho_k<r_k/2$ such that 
\[
C_{p_k}\left(\frac{\rho_k}{r_k}\right)^{\beta_{p_k}}<Lr_k,
\]
in which case for $y\in B(x,\rho_k)\cap\Om$ we have
\[
|u_{p_k}(y)-f(y)|\le 4Lr_k<\eps_k.
\]
Now if we choose $\Om_k\Subset\Om$ such that $0<\dist(\partial\Om_k,\partial\Om)<\rho_k$, we have
by the comparison principle for $p$-harmonic functions (see~\cite{BB}) that 
$|v_k-u_{p_k}|<\eps_k$ on $\Om_k$. It then follows that $v_k\to u$ 
in $L^1(X)$ as $k\to\infty$, where we know from
the previous section that $u$ satisfies the Dirichlet problem~(T) on $\Om$ with boundary data $f$.
Examples of domains where~\eqref{eq:fat} hold include the domains whose complements are uniformly
$1$-fat, see~\cite{BjMcSh}; in particular, domains whose complements are porous satisfy this requirement.

In contrast to problem~(T), the Dirichlet problem~(B) of Definition~\ref{def:dirichlet} is associated with approximation of
$\Om$ from \emph{outside}, as we will see next.

Let $\Om\subset X$ be a nonempty bounded open set with
$X\setminus\overline{\Om}\neq \emptyset$.
Let $\Om_k\subset X$, $k\in\N$, be a sequence of bounded open sets such that 
$\overline{\Om}=\bigcap_{k\in\N}\Om_k$ and
$\Om_{k+1}\Subset\Om_k$ for each $k\in\N$.
Note that since $X\setminus\overline{\Om}\neq \emptyset$ is open,
we have $\mu(X\setminus\overline{\Om})>0$.
Thus also $\capa_1(X\setminus \Om_k)\ge \mu(X\setminus \Om_k)>0$ for all
sufficiently large $k$, and so we may as well assume that this is true for all $k\in\N$.
For each $k\in\N$, fix a
decreasing sequence $(p_{k,m})_m$ such that $p_{k,m}>1$ and 
\[\lim_{m\rightarrow\infty}p_{k,m}=1.
\]
Let $f\in\Lip(X)$ be boundedly supported, and
let $u_{k,m}\in N^{1,p_{k,m}}(X)$ be the $p_{k,m}$-harmonic function
solving the Dirichlet problem on $\Om_k$ with boundary data $f$.
According to Lemma~\ref{lem:bv embedding lemma}, by
 passing to a subsequence of
$(p_{k,m})_m$ (not relabeled), we find a function $u_k\in \BV(X)$ with $u_k=f$
on $X\setminus \Omega_k$ such that
$u_{k,m}\to u_k$ in $L^1(X)$ as $m\to\infty$. We have
\begin{align*}
\int_{\Omega_k}g_{u_{k,m}}\,d\mu
&\le \left(\int_{\Omega_k}g^{p_{k,m}}_{u_{k,m}}\,d\mu\right)^{1/p_{k,m}}\mu(\Omega_k)^{1-1/p_{k,m}}\\
&\le \left(\int_{\Omega_k}g^{p_{k,m}}_fd\mu\right)^{1/p_{k,m}}\mu(\Omega_k)^{1-1/p_{k,m}}.
\end{align*}
As in Section~\ref{sec:limit is least gradient function}, $g_{u_{k,m}}$  always
denotes the minimal $p$-weak upper gradient of $u_{k,m}$, and
for a Lipschitz function $f$, $g_f$ denotes the minimal $p$-weak upper gradient of
$f$ for any $p>1$.
By~\eqref{eq:comparing variation and upper gradients},
\[
\Vert Du_{k,m}\Vert(X)\le \int_X g_{u_{k,m}}\,d\mu
=\int_{\Om_k} g_{u_{k,m}}\,d\mu+\int_{X\setminus\Om_k} g_{f}\,d\mu.
\]
By the lower semicontinuity of the total variation,
\[
\Vert Du_k\Vert(X)\le \liminf_{m\rightarrow\infty}\Vert Du_{k,m}\Vert(X)\le 
\int_X g_f\,d\mu.
\]
We have $|f|\le M$ on $X$ for some $M\ge 0$.
By the comparison principle, we also have $|u_{k,m}|\le M$, and then $|u_k|\le M$.
Thus for all $k\in\N$,
\[
\Vert u_k\Vert_{L^1(X)}\le \Vert u_k\Vert_{L^{\infty}(X)}\mu(\Omega_k)
+\Vert f\Vert_{L^1(X\setminus\Om_k)}
\le M\mu(\Omega_1)+\Vert f\Vert_{L^1(X)}<\infty.
\]
Then by the compact embedding given in~\cite[Theorem~3.7]{Miranda03},
by passing to a subsequence of $k$ (not relabeled),
we obtain $u_k\to u$ in $L^1(X)$ as $k\to\infty$, for $u\in \BV(X)$.
By passing to a further subsequence (not relabeled), we can assume  that 
\begin{equation}\label{eq:L1 convergence of uks to u}
\int_X|u_k-u|\, d\mu<1/k.
\end{equation}

\begin{theorem}\label{thm:ProblemB}
Let $\Om\subset X$ be a nonempty bounded open set such that
$X\setminus\overline{\Om}\neq \emptyset$,
and suppose that $f\in\Lip(X)$ is boundedly supported.
Let $u$ be the function constructed above. 
Then $u$ solves Problem~(B) of Definition~\ref{def:dirichlet}.
\end{theorem}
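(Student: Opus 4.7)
The plan is to verify two things: (a) $u=f$ a.e.\ on $X\setminus\overline\Om$, so $u$ is admissible for Problem~(B), and (b) $\Vert Du\Vert(\overline\Om)\le\Vert Dv\Vert(\overline\Om)$ for every competitor $v\in\BV(X)$ with $v=f$ on $X\setminus\overline\Om$. For (a), pass to a diagonal subsequence $w_k:=u_{k,m(k)}$ with $w_k\to u$ in $L^1(X)$; this is possible since $u_{k,m}\to u_k$ in $L^1(X)$ as $m\to\infty$ and $u_k\to u$ in $L^1(X)$ by~\eqref{eq:L1 convergence of uks to u}. Each $w_k$ equals $f$ outside $\Om_k$. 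Given any bounded open $V$ with $\overline V\subset X\setminus\overline\Om$, the compactness of $\overline V$ combined with $X\setminus\overline\Om=\bigcup_k(X\setminus\Om_k)$ forces $\overline V\subset X\setminus\Om_k$ for all large $k$, so $w_k\equiv f$ on $V$ eventually and $u=f$ a.e.\ on $V$. Exhausting $X\setminus\overline\Om$ by such $V$'s proves (a).

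For (b), fix $v$ as above. The heart of the argument is to construct, for each $k$, a Lipschitz function $\tilde\phi_k\in\Lip(X)$ with $\tilde\phi_k=f$ on $X\setminus\Om_k$ and
\[
\int_{\Om_k}g_{\tilde\phi_k}\,d\mu\le \Vert Dv\Vert(\overline\Om)+2\int_{\Om_k\setminus\overline\Om}g_f\,d\mu+\tfrac1k.
\]
To do this, apply Lemma~\ref{lem:L1 loc and L1 convergence} on $\Om_k$ to obtain $\phi_j\in\liploc(\Om_k)$ with $\phi_j\to v$ in $L^1(\Om_k)$ and $\int_{\Om_k}g_{\phi_j}\,d\mu\to\Vert Dv\Vert(\Om_k)$; pick a Lipschitz cutoff $\eta$ on $X$ with $0\le\eta\le 1$ that equals $1$ on a neighborhood $W$ of $\overline\Om$ with $\overline W\Subset\Om_k$ and is supported in a compact subset of $\Om_k$; and set $\tilde\phi_{k,j}:=\eta\phi_j+(1-\eta)f$, which lies in $\Lip(X)$ since $\eta\phi_j$ has compact support in $\Om_k$. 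The Leibniz rule from~\cite[Lemma~2.18]{BB} yields
\[
g_{\tilde\phi_{k,j}}\le \eta g_{\phi_j}+(1-\eta)g_f+g_\eta|\phi_j-f|.
\]
Since $v\equiv f$ on $\Om_k\setminus\overline\Om\supset\Om_k\setminus W$ and $g_\eta$ is supported in $\Om_k\setminus W$, the cross term $\int g_\eta|\phi_j-f|\,d\mu\to 0$ as $j\to\infty$. Together with
$\Vert Dv\Vert(\Om_k)=\Vert Dv\Vert(\overline\Om)+\Vert Dv\Vert(\Om_k\setminus\overline\Om)\le\Vert Dv\Vert(\overline\Om)+\int_{\Om_k\setminus\overline\Om}g_f\,d\mu$,
a diagonal choice $\tilde\phi_k:=\tilde\phi_{k,j(k)}$ delivers the stated bound.

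With $\tilde\phi_k$ admissible as a $p_{k,m}$-harmonic competitor against $u_{k,m}$, H\"older's inequality gives
\[
\int_{\Om_k}g_{u_{k,m}}\,d\mu\le \mu(\Om_k)^{1-1/p_{k,m}}\Big(\int_{\Om_k}g_{\tilde\phi_k}^{p_{k,m}}\,d\mu\Big)^{1/p_{k,m}},
\]
which tends to $\int_{\Om_k}g_{\tilde\phi_k}\,d\mu$ as $m\to\infty$, because $g_{\tilde\phi_k}\in L^\infty(\Om_k)$. Refine the index $m(k)$ so that in addition $\int_{\Om_k}g_{w_k}\,d\mu\le\int_{\Om_k}g_{\tilde\phi_k}\,d\mu+\tfrac1k$, where $w_k=u_{k,m(k)}$. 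Fix $k_0$. For $k\ge k_0$, since $w_k=f$ on $X\setminus\Om_k$, \cite[Lemma~2.19]{BB} gives $g_{w_k}=g_f$ $\mu$-a.e.\ on $X\setminus\Om_k$, and~\eqref{eq:comparing variation and upper gradients} combined with $\Om_k\subset\Om_{k_0}$ yields
\[
\Vert Dw_k\Vert(\Om_{k_0})\le \int_{\Om_{k_0}}g_{w_k}\,d\mu\le \int_{\Om_k}g_{w_k}\,d\mu+\int_{\Om_{k_0}\setminus\Om_k}g_f\,d\mu.
\]
As $k\to\infty$, the first right-hand term is bounded by $\Vert Dv\Vert(\overline\Om)+o(1)$ because $\mu(\Om_k\setminus\overline\Om)\to 0$, while $\int_{\Om_{k_0}\setminus\Om_k}g_f\,d\mu\to\int_{\Om_{k_0}\setminus\overline\Om}g_f\,d\mu$ by monotone convergence. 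Lower semicontinuity of BV under $L^1$-convergence together with $\overline\Om\subset\Om_{k_0}$ gives $\Vert Du\Vert(\overline\Om)\le\Vert Du\Vert(\Om_{k_0})\le \Vert Dv\Vert(\overline\Om)+\int_{\Om_{k_0}\setminus\overline\Om}g_f\,d\mu$, and letting $k_0\to\infty$ completes the proof.

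The main obstacle is the construction of the test functions $\tilde\phi_k$: they must equal $f$ precisely outside $\Om_k$ (for admissibility against $u_{k,m}$) and at the same time carry essentially the same upper-gradient $L^1$-energy on $\Om_k$ as $\Vert Dv\Vert(\overline\Om)$, with a slack that vanishes as $k\to\infty$. The identity $v\equiv f$ on the annulus $\Om_k\setminus\overline\Om$ is precisely what drives the Leibniz cross term to zero in the $L^1$-limit, and the outer approximation $\Om_k\searrow\overline\Om$ is what turns the $f$-dependent remainder into an asymptotically vanishing correction.
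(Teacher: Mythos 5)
Your proof is correct, and it takes a genuinely different route from the paper's. Briefly comparing the two approaches: the paper fixes a single global approximating sequence $(\Psi_j)\subset\liploc(X)$ of $u+\psi$ (equivalently of the competitor $v$), invokes weak$^*$ convergence of the measures $g_{\Psi_j}\,d\mu$ to $d\|D(u+\psi)\|$ to bound $\limsup_j\int_{\Omega_k}g_{\Psi_j}\,d\mu$ by $\|D(u+\psi)\|(\overline{\Omega_k})$, and then controls the Leibniz cross term in $L^{p_{k,m_k}}$ via the $L^\infty$ bound $|\Psi_j|\le M$ and the quantity $\eps_k^{2/p_{k,m_k}-1}$, which requires carefully interlocking the choices of $j_k$, $m_k$, $\eps_k$. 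You instead approximate $v$ on each $\Omega_k$ separately and exploit directly the fact that $v\equiv f$ on the annulus $\Omega_k\setminus\overline\Omega$: this kills the cross term by plain $L^1$-convergence $\phi_j\to v$ at fixed $k$, and since each resulting test function $\tilde\phi_k$ is a fixed Lipschitz function with $g_{\tilde\phi_k}\in L^\infty(\Omega_k)$, the passage $(\int g_{\tilde\phi_k}^{p_{k,m}})^{1/p_{k,m}}\to\int g_{\tilde\phi_k}$ is immediate. This sidesteps both the weak$^*$ convergence step and the $\eps_k^{2/p-1}$ bookkeeping, and is, in my view, somewhat cleaner. You also explicitly verify admissibility $u=f$ a.e.\ on $X\setminus\overline\Omega$, which the paper leaves implicit; that check is correct.

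One small imprecision worth flagging: you invoke Lemma~\ref{lem:L1 loc and L1 convergence}, which is stated with the minimal $1$-weak upper gradients $g_{w_i}$, but the $p$-harmonicity comparison and the refined Leibniz estimate you use require the minimal $p$-weak upper gradients $\Lip\phi_j$ (for $p>1$). The two need not agree pointwise a.e.\ a priori; however, as the paper explains on page~\pageref{p-weak upper gradients in total variation} (citing~\cite[Theorem~1.1]{AmDi} and Remark~\ref{rmk:minimal weak upper gradients}), one may equivalently use $\Lip u_i$ in the definition of total variation, and the proof of Lemma~\ref{lem:L1 loc and L1 convergence} goes through verbatim with $\Lip$ replacing $g$ via the subadditivity~\eqref{eq:subadditivity of local Lipschitz constants}. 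So this is a citation detail rather than a gap, and the paper's own proof of Theorem~\ref{thm:ProblemB} attends to exactly the same point when choosing its $\Psi_j$.
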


Note that in this section we do not need $\Om$ to satisfy the extra conditions
imposed in Section~\ref{sec:Dirichlet problem trace}.

\begin{proof}
Take a test function $\psi\in \BV(X)$ such that $\psi=0$ on $X\setminus\overline{\Om}$. 
We can choose a sequence $(\Psi_j)\subset \liploc(X)$
such that $\Psi_j\to u+\psi$ in $L^1(X)$ as
$j\to\infty$ and 
\[
\lim_{j\to\infty}\int_X g_{\Psi_j}\,d\mu= \Vert D(u+\psi) \Vert(X),
\]
where $g_{\Psi_j}$ is the minimal $p$-weak upper gradient of $\Psi_j$ in
$X$, for any $p>1$, see the discussion on page \pageref{p-weak upper gradients in total variation}.
Then also $g_{\Psi_j}\,d\mu\to d\Vert D(u+\psi) \Vert$ weakly* in the sense of measures
(see e.g.~\cite[Proposition 3.8]{HKLL}),
so that for each $k\in\N$,
\begin{equation}\label{eq:upper semicontinuity for Psijs}
\begin{split}
\limsup_{j\to\infty}\int_{\Om_k} g_{\Psi_j}\,d\mu
&\le \Vert D(u+\psi)\Vert(\overline{\Om_k})\\
&=\Vert D(u+\psi)\Vert(\overline{\Om})+\Vert D(u+\psi)\Vert(\overline{\Om_k}\setminus\overline{\Om})\\
&=\Vert D(u+\psi)\Vert(\overline{\Om})+\Vert Df\Vert(\overline{\Om_k}\setminus\overline{\Om})
\end{split}
\end{equation}
since $u+\psi=f$ in $\overline{\Om_k}\setminus\overline{\Om}$.
For each $k\in\N$, let $\eps_k:=\dist(\overline{\Omega},X\setminus\Omega_k)>0$,
and let $\eta_{\eps_k}\in\Lip_c(X)$ with $0\le \eta_{\eps_k}\le 1$,
$\eta_{\eps_k}=1$ in $\overline{\Omega}$, $\eta_{\eps_k}=0$ in $X\setminus\Omega_k$,
and $g_{\eta_{\eps_k}}\le 1/\eps_k$. We set
\[
\psi_{j,k}:=\eta_{\eps_k}\Psi_j+(1-\eta_{\eps_k})f.
\]
By the Leibniz rule of~\cite[Lemma~2.18]{BB},
\begin{equation}\label{eq:Leibniz rule estimate for psijk}
\begin{split}
g_{\psi_{j,k}} &\le 
g_{\Psi_j}\eta_{\eps_k} + g_{f}(1-\eta_{\eps_k}) + 
g_{\eta_{\eps_k}}|\Psi_j-f| \\
&\le g_{\Psi_j}\ch_{\Om_k} + g_{f}\ch_{\Om_k\setminus\overline{\Om}} +
 (1/\eps_k)|\Psi_j-f|\ch_{\Om_k\setminus\overline{\Om}}.
\end{split}
\end{equation}
Note that this function agrees with $f$ in
$X\setminus\Omega_k$.

As noted above, $|u_k|\le M$ for all $k\in\N$, and so $|u|\le M$. As truncation decreases total variation, we 
can also assume that 
$|u+\psi|\le M$ and that the approximating functions $\Psi_j$ also satisfy $|\Psi_j|\le M$.
Then we have (assuming $p_{k,m}<2$ and $M\ge 1$)
\begin{align*}
\int_{X\setminus\overline{\Om}}|\Psi_j-f|^{p_{k,m}}\, d\mu
&\le 2^{p_{k,m}-1}M^{p_{k,m}-1}\int_{X\setminus\overline{\Om}}|\Psi_j-f|\, d\mu\\
& \le 2 M\int_{X\setminus\overline{\Om}}|\Psi_j-f|\, d\mu.
\end{align*}
For each $k\in\N$, by~\eqref{eq:upper semicontinuity for Psijs} we can choose $j_k\in\N$ large enough so that
\begin{equation}\label{eq:choice of jk}
\int_{\Om_k}g_{\Psi_{j_k}}\, d\mu
\le \Vert D(u+\psi)\Vert(\overline{\Om})+\Vert Df\Vert(\overline{\Om_k}\setminus\overline{\Om})+1/k
\end{equation}
and
\[
\int_{X\setminus\overline{\Om}}|\Psi_{j_k}-f|\, d\mu\le \frac{\eps_k^2}{2M},
\]
and then it follows that for all $m$
\begin{equation}\label{eq:L1 closeness of Psij and f}
\int_{X\setminus\overline{\Om}}|\Psi_{j_k}-f|^{p_{k,m}}\, d\mu\le \eps_k^2.
\end{equation}
Then we choose $m_k\in\N$ large enough so that $1<p_{k,m_k}<1+1/k$,
\begin{equation}\label{eq:L1 closeness of ukmk and uk}
\int_X|u_{k,m_k}-u_k|\, d\mu<1/k,
\end{equation}
and
\[
\left(\int_{\Om_k}g_{\Psi_{j_k}}^{p_{k,m_k}}\, d\mu\right)^{1/p_{k,m_k}}
\le \int_{\Om_k}g_{\Psi_{j_k}}\, d\mu+1/k.
\]
It then follows by~\eqref{eq:choice of jk} that 
\begin{equation}\label{eq:estimate for upper gradients of psis by variation}
\left(\int_{\Om_k}g_{\Psi_{j_k}}^{p_{k,m_k}}\, d\mu\right)^{1/p_{k,m_k}}
\le \Vert D(u+\psi)\Vert(\overline{\Om})+\Vert Df\Vert(\overline{\Om_k}\setminus\overline{\Om})+2/k.
\end{equation}
Combining~\eqref{eq:L1 closeness of ukmk and uk} with~\eqref{eq:L1 convergence of uks to u}, we have $u_{k,m_k}\to u$ in $L^1(X)$ as $k\to\infty$.
By lower semicontinuity of the total variation, H\"older's inequality,
and the fact that $\psi_{j_k,k}$ can be used to test the $p_{k,m}$-harmonicity of
$u_{k,m_k}$, we get 
\begin{align*}
&\Vert Du\Vert(\overline{\Om})\le \liminf_{k\to\infty}\int_{\Omega_k} g_{u_{k,m_k}}\, d\mu\\
&\ \ \ \le \liminf_{k\to\infty}\left(\int_{\Om_k} g_{u_{k,m_k}}^{p_{k,m_k}}\, d\mu\right)^{1/p_{k,m_k}}\\
&\ \ \ \le \liminf_{k\to\infty}\left(\int_{\Om_k}g_{\psi_{j_k,k}}^{p_{k,m_k}}\, d\mu\right)^{1/p_{k,m_k}}\\
&\ \ \ \le  \limsup_{k\to\infty} \, \left(\int_{\Om_k}g_{\Psi_{j_k}}^{p_{k,m_k}}\, d\mu\right)^{1/p_{k,m_k}}
+\limsup_{k\to\infty}\left(\int_{\Omega_k\setminus\overline{\Om}}g_{f}^{p_{k,m_k}}\, d\mu\right)^{1/p_{k,m_k}}\\
&\ \ \qquad +\limsup_{k\to\infty} \frac{1}{\eps_k}
\left(\int_{\Om_k\setminus\overline{\Om}}|\Psi_{j_k}-f|^{p_{k,m_k}}\, d\mu\right)^{1/p_{k,m_k}}\\
&\ \ \ \le \Vert D(u+\psi)\Vert(\overline{\Om})
+0+\liminf_{k\to\infty} \eps_k^{2/p_{k,m_k}-1}\\
&\ \ \ =\Vert D(u+\psi)\Vert(\overline{\Om}).
\end{align*}
In the above, we used~\eqref{eq:Leibniz rule estimate for psijk} to arrive at the fourth inequality, and
\eqref{eq:estimate for upper gradients of psis by variation},~\eqref{eq:L1 closeness of Psij and f} in 
obtaining the penultimate inequality.
\end{proof}

\section{Alternate definitions of functions of least gradient}

In this section we consider possible definitions of what it means for a function
$u\in \BV(\Omega)$ to be of least gradient in an open set $\Omega\subset X$.
This is not to be conflated with the notions of solutions to the Dirichlet problems 
studied in the previous sections, as such solutions must in addition satisfy a boundary condition.

Recall that by $\BV_c(\Om)$ we mean the collection of functions $\psi\in \BV(\Om)$
such that $\supp(\psi)\subset\Om$, and by $\BV_0(\Om)$ we mean the collection of functions $\psi\in \BV(\Om)$
for which $T_+\psi$ exists and $T_+\psi=0$ $\mathcal{H}$-a.e.~in $\partial \Om$.

In addition to the class $\BV_0(\Om)$, in this section we also consider a larger class of test functions,
the class $wk-\BV_0(\Om)$ of functions $\psi\in \BV(\Om)$ such that
\[
\lim_{r\to0^+}\frac{1}{\mu(B(x,r))}\int_{B(x,r)\cap\Om}|\psi|\, d\mu=0
\]
for $\mathcal{H}$-a.e.~$x\in\partial\Om$.

Note that $\BV_0(\Om)\subset wk-\BV_0(\Om)$.
If $P(\Om,X)<\infty$, 
recall that $\mathcal H(\partial^*\Om\setminus \Sigma_\gamma\Om)=0$, where
$\Sigma_\gamma\Om$ was defined after~\eqref{eq:density of E}.
Note that if $f\in wk-\BV_0(\Om)$, then for $\mathcal{H}$-a.e.~$x\in (I_\Om\cap\partial\Om)\cup\Sigma_\gamma\Om$ we have
\[
\lim_{r\to0^+}\frac{1}{\mu(B(x,r)\cap\Om)}\int_{B(x,r)\cap\Om}|u|\, d\mu=0,
\]
that is, $T_+u(x)=0$. The philosophy here is that the trace of 
test functions in $O_\Om$, from the point of view of solving a Dirichlet problem for BV functions, should not affect
the solution to the problem.

Recall that $\Vert Du\Vert^s$ denotes the singular part of the variation measure with 
respect to $\mu$.

\begin{definition}
We have the following alternative notions of functions of least gradient. Let $u\in\BV(\Om)$.
\begin{enumerate}
\item We say that $u$ is of \emph{least gradient} in $\Omega$ if whenever 
$\psi\in \BV_c(\Omega)$, we have
$\Vert Du\Vert(\Om)\le \Vert D(u+\psi)\Vert(\Om)$.
\item We say that $u$ is \emph{globally of least gradient} in $\Omega$ if whenever $\psi\in \BV_0(\Omega)$,
we have $\Vert Du\Vert(\Omega)\le \Vert D(u+\psi)\Vert(\Omega)$.
\item We say that $u$ is \emph{globally of least gradient in the wider sense} in $\Omega$ if whenever 
$\psi\in wk-\BV_0(\Omega)$, we have $\Vert Du\Vert(\Omega)\le \Vert D(u+\psi)\Vert(\Omega)$.
\item We say that $u$ is of \emph{least gradient in the sense of Anzellotti} in $\Omega$
if whenever
$\psi\in  wk-\BV_0(\Omega)$ with $\Vert D\psi\Vert^s\ll\Vert Du\Vert^s$, we have
$\Vert Du\Vert(\Omega)\le \Vert D(u+\psi)\Vert(\Omega)$.
\item We say that $u$ is of \emph{least gradient relative to Lipschitz functions} in $\Om$
if whenever
$\varphi\in \liploc(\Omega)$ with $\varphi-u\in wk-\BV_0(\Om)$, we have
$\Vert Du\Vert(\Omega)\le \Vert D\varphi\Vert(\Omega)$.
\end{enumerate}
\end{definition}

\begin{lemma}\label{lem:least gradient wrt lip functions implies least gradient}
Let $\Omega\subset X$ be an open set with $\mathcal H(\partial\Om)<\infty$. 
Let $u$ be of least gradient relative to Lipschitz functions in $\Omega$. Then $u$ is
globally of least gradient in the wider sense in $\Omega$.
\end{lemma}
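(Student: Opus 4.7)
The plan is to find, for each $\psi\in wk-\BV_0(\Om)$, a sequence $(\varphi_i)\subset\liploc(\Om)$ satisfying (i)~$\varphi_i - u\in wk-\BV_0(\Om)$ and (ii)~$\int_\Om g_{\varphi_i}\,d\mu\to\Vert D(u+\psi)\Vert(\Om)$. Once such a sequence is in hand, the hypothesis of least gradient relative to Lipschitz functions yields $\Vert Du\Vert(\Om)\le\Vert D\varphi_i\Vert(\Om)\le\int_\Om g_{\varphi_i}\,d\mu$ for every $i$, and letting $i\to\infty$ gives the desired inequality $\Vert Du\Vert(\Om)\le\Vert D(u+\psi)\Vert(\Om)$. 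By truncating, one may assume that $u$ and $\psi$ are bounded.

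To produce $(\varphi_i)$, I would apply Lemma~\ref{lem:L1 loc and L1 convergence} to $f:=u+\psi\in\BV(\Om)$. Inspecting its proof, the approximants have the form $\varphi_i=\eta u_i+(1-\eta)v_i$, where $\eta$ is a Lipschitz cutoff supported strictly inside $\Om$ and $v_i$ is a discrete convolution of $f$ with respect to a Whitney-type covering at scale $1/i$. In particular, $\varphi_i\equiv v_i$ in a neighborhood of $\partial\Om$, and~(ii) holds automatically. Using linearity of the discrete convolution, $v_i=v_i^u+v_i^\psi$, where $v_i^u$ and $v_i^\psi$ are the discrete convolutions of $u$ and $\psi$ relative to the same Whitney covering, so near $\partial\Om$ we have $\varphi_i-u = v_i^\psi + (v_i^u - u)$.

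For the term $v_i^\psi$, writing $v_i^\psi(y)=\sum_j\psi_{5B_j}\phi_j(y)$ and observing that every Whitney ball $B_j$ with $2B_j\cap B(x,r)\ne\emptyset$ satisfies $r_j\lesssim r$ and $5B_j\subset B(x,Cr)\cap\Om$ (because $r_j\le\dist(x_j,X\setminus\Om)/(40\lambda)$), together with the bounded overlap property of the covering, I exchange summation and integration to obtain
\[
\frac{1}{\mu(B(x,r))}\int_{B(x,r)\cap\Om}|v_i^\psi|\,d\mu\;\le\;\frac{C}{\mu(B(x,Cr))}\int_{B(x,Cr)\cap\Om}|\psi|\,d\mu,
\]
which vanishes as $r\to 0^+$ at $\mathcal H$-a.e.\ $x\in\partial\Om$ by doubling and $\psi\in wk-\BV_0(\Om)$.

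The main obstacle is then to show $v_i^u-u\in wk-\BV_0(\Om)$. Here I would use that $u\in\BV(\Om)$ implies $u^{\wedge}(x)=u^{\vee}(x)=:u^*(x)$ for $\mathcal H$-a.e.\ $x\in X$, so by the definitions~\eqref{eq:lower approximate limit}--\eqref{eq:upper approximate limit} together with boundedness of $u$, the averages $\mu(B(x,r))^{-1}\int_{B(x,r)\cap\Om}|u-u^*(x)|\,d\mu$ tend to $0$ as $r\to 0^+$ at $\mathcal H$-a.e.\ $x\in\partial\Om$. Repeating the estimate of the previous paragraph for the bounded function $u-u^*(x)$ in place of $\psi$ (via the identity $v_i^u(y)-u^*(x)=\sum_j(u-u^*(x))_{5B_j}\phi_j(y)$), the $L^1$-averages of $v_i^u-u^*(x)$ on $B(x,r)\cap\Om$ are controlled by a constant multiple of those of $u-u^*(x)$ on $B(x,Cr)\cap\Om$ and therefore also vanish. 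A triangle inequality then gives $v_i^u-u\in wk-\BV_0(\Om)$, so $\varphi_i-u\in wk-\BV_0(\Om)$ for each $i$, and the proof concludes as described in the first paragraph.
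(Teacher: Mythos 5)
The overall strategy you adopt — produce a sequence $(\varphi_i)\subset\liploc(\Om)$ with $\varphi_i-u\in wk\text{-}\BV_0(\Om)$ and $\int_\Om g_{\varphi_i}\,d\mu\to\Vert D(u+\psi)\Vert(\Om)$, then invoke the hypothesis — is precisely the structure of the paper's proof. The paper, however, does not construct such a sequence by hand: it invokes \cite[Corollary~6.8]{LahSh}, which guarantees exactly this strict approximation with matching weak boundary behavior when $\mathcal H(\partial\Om)<\infty$. You are, in effect, attempting to re-derive that corollary via discrete convolutions, and this is where a genuine gap appears.

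The pivotal claim in your treatment of the term $v_i^u-u$ is that ``$u\in\BV(\Om)$ implies $u^{\wedge}(x)=u^{\vee}(x)$ for $\mathcal H$-a.e.\ $x\in X$.'' This is false. The jump set $S_u$ of a $\BV$ function is $\sigma$-finite with respect to $\mathcal H$, but it can have positive (even infinite) $\mathcal H$-measure; the simplest example is $u=\ch_E$ with $E$ a set of finite and positive perimeter, in which case $u^{\wedge}<u^{\vee}$ on all of $\partial^*E$ and $\mathcal H(\partial^*E)>0$. As a consequence, the value $u^*(x)$ on which your argument rests is not defined $\mathcal H$-a.e., and the assertion that $\mu(B(x,r))^{-1}\int_{B(x,r)\cap\Om}|u-u^*(x)|\,d\mu\to 0$ for $\mathcal H$-a.e.\ $x\in\partial\Om$ is exactly the statement that the interior trace $T_+u$ exists $\mathcal H$-a.e.\ on $\partial\Om$. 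That existence is \emph{not} a consequence of $\mathcal H(\partial\Om)<\infty$ alone; it requires additional geometric hypotheses on $\Om$ of the kind imposed in Section~7 (e.g., measure density of $\Om$ from inside, a codimension-one Ahlfors bound near $\partial\Om$). A correct handling of $v_i^u-u$ must therefore avoid appealing to pointwise traces of $u$; one would instead need to control quantities such as $r\,\Vert Du\Vert(B(x,Cr)\cap\Om)/\mu(B(x,r))$ along $\partial\Om$ via a covering/differentiation argument, which is essentially the content of the cited trace-approximation result in \cite{LahSh}. Your analysis of the $v_i^{\psi}$ term, using $\psi\in wk\text{-}\BV_0(\Om)$ together with the Whitney geometry and bounded overlap, is fine; the gap is confined to the $v_i^u-u$ term, but it is a real one.
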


\begin{proof}
Let $\psi\in wk-\BV_0(\Omega)$.
By~\cite[Corollary~6.8]{LahSh}, we find a sequence
$(\varphi_{j})\subset \liploc(\Om)$ such that
$\varphi_{j}-(u+\psi)\in wk-\BV_0(\Om)$,
$\varphi_{j}\to u+\psi$ in $L^1(\Om)$, and
$\Vert D\varphi_{j}\Vert(\Om)\to\Vert D(u+\psi)\Vert(\Om)$ as $j\to\infty$. Then also
$\varphi_{j}-u\in wk-\BV_0(\Om)$. 
Thus by the fact that $u$ is of least gradient relative to Lipschitz functions,
\[
\Vert Du\Vert(\Om)\le \lim_{j\to\infty}\Vert D\varphi_{j}\Vert(\Om)=
\Vert D (u+\psi)\Vert(\Om).
\]
\end{proof}

\begin{proposition}
Let $\Omega\subset X$ be an open set with $\mathcal H(\partial\Om)<\infty$, and 
let $u\in\BV(\Omega)$. Then the alternative definitions (1)-(5) of $u$ being of least
gradient in $\Om$ are equivalent.
\end{proposition}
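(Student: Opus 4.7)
The plan is to establish the cycle $(3)\Rightarrow (2)\Rightarrow (1)\Rightarrow (5)\Rightarrow (3)$ together with the side chain $(3)\Rightarrow (4)\Rightarrow (5)$, thereby closing all loops and obtaining equivalence of the five notions.

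The implications $(3)\Rightarrow (2)\Rightarrow (1)$ will be immediate from the inclusions of test function classes $\BV_c(\Om)\subset\BV_0(\Om)\subset wk\text{-}\BV_0(\Om)$: the first because any function compactly supported in $\Om$ has vanishing trace on $\partial\Om$, and the second because $T_+\psi(x)=0$ implies the weaker density condition defining $wk\text{-}\BV_0$ (noting $\mu(B(x,r)\cap\Om)\le\mu(B(x,r))$). Similarly, $(3)\Rightarrow (4)$ and $(3)\Rightarrow (5)$ follow by restricting to subclasses of admissible test functions, while $(5)\Rightarrow (3)$ is exactly Lemma~\ref{lem:least gradient wrt lip functions implies least gradient}. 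For $(4)\Rightarrow (5)$, given $\varphi\in\liploc(\Om)$ with $\psi:=\varphi-u\in wk\text{-}\BV_0(\Om)$, the observation is that $\|D\varphi\|$ is absolutely continuous with respect to $\mu$ since $\varphi$ is locally Lipschitz, so $\|D\varphi\|^s=0$; the subadditivity of singular parts applied to $D\psi=D\varphi-Du$ and $Du=D\varphi-D\psi$ then forces $\|D\psi\|^s=\|Du\|^s$, hence $\|D\psi\|^s\ll\|Du\|^s$ trivially, and (4) yields $\|Du\|(\Om)\le\|D(u+\psi)\|(\Om)=\|D\varphi\|(\Om)$.

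The heart of the proof will be $(1)\Rightarrow (5)$. Given $\varphi\in\liploc(\Om)$ with $\psi:=\varphi-u\in wk\text{-}\BV_0(\Om)$, for $\epsilon>0$ the plan is to take Lipschitz cutoffs $\eta_\epsilon\in\Lipc(\Om)$ with $0\le\eta_\epsilon\le 1$, $\eta_\epsilon=1$ on $\Om_{2\epsilon}:=\{x\in\Om:\dist(x,X\setminus\Om)>2\epsilon\}$, $\eta_\epsilon=0$ on $X\setminus\Om_\epsilon$, and $g_{\eta_\epsilon}\le 1/\epsilon$. Then $\psi_\epsilon:=\eta_\epsilon\psi\in\BV_c(\Om)$ and $u+\psi_\epsilon=\eta_\epsilon\varphi+(1-\eta_\epsilon)u$, so applying (1) to $\psi_\epsilon$ together with the Leibniz rule of~\cite[Lemma~3.2]{HKLS} will produce
\[
\|Du\|(\Om)\le\|D(u+\psi_\epsilon)\|(\Om)\le\|D\varphi\|(\Om_\epsilon)+\|Du\|(\Om\setminus\Om_{2\epsilon})+\frac{1}{\epsilon}\int_{\Om_\epsilon\setminus\Om_{2\epsilon}}|\psi|\,d\mu.
\]
The first two right-hand side terms tend to $\|D\varphi\|(\Om)$ and $0$ respectively as $\epsilon\to 0$, by finiteness of the Radon measures involved. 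The main obstacle will be to show that the Hardy-type remainder $\epsilon^{-1}\int_{\Om_\epsilon\setminus\Om_{2\epsilon}}|\psi|\,d\mu$ has $\liminf$ equal to zero along some sequence $\epsilon_k\to 0^+$; a Fubini computation reduces this to the Hardy inequality $\int_\Om|\psi|/\dist(x,\partial\Om)\,d\mu(x)<\infty$ for $\psi\in wk\text{-}\BV_0(\Om)$, and this is precisely where the hypothesis $\mathcal H(\partial\Om)<\infty$ must enter, together with the near-boundary trace analysis developed in~\cite{LahSh}.
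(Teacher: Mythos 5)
Your logical skeleton is sound and mostly matches the paper's: the implications $(3)\Rightarrow(2)\Rightarrow(1)$, $(3)\Rightarrow(4)$, and $(5)\Rightarrow(3)$ via Lemma~\ref{lem:least gradient wrt lip functions implies least gradient} are exactly as in the paper, and your subadditivity-of-singular-parts argument for $(4)\Rightarrow(5)$ is a slightly more explicit version of the paper's one-liner. The divergence is in how the cycle is closed: the paper proves $(1)\Rightarrow(3)$ directly by invoking the density result~\cite[Theorem~6.10]{LahSh}, which produces for each $\psi\in wk\text{-}\BV_0(\Om)$ a sequence $\psi_k\in\BV_c(\Om)$ with $\psi_k\to\psi$ in $\BV(\Om)$, and then $(3)\Rightarrow(5)$ is trivial by restriction. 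You instead try to prove $(1)\Rightarrow(5)$ by hand via cutoffs.

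The hand-made $(1)\Rightarrow(5)$ has a genuine gap. Your Fubini reduction of $\liminf_{\eps\to 0^+}\eps^{-1}\int_{\Om_\eps\setminus\Om_{2\eps}}|\psi|\,d\mu=0$ to the Hardy inequality $\int_\Om|\psi|/\dist(x,X\setminus\Om)\,d\mu<\infty$ is correct, but you do not prove the Hardy inequality, and it does not follow from $\psi\in wk\text{-}\BV_0(\Om)$ together with $\mathcal H(\partial\Om)<\infty$ in any obvious way. The membership in $wk\text{-}\BV_0(\Om)$ gives a \emph{pointwise} decay of averages $\mu(B(x,r))^{-1}\int_{B(x,r)\cap\Om}|\psi|\,d\mu\to 0$ for $\mathcal H$-a.e.\ $x\in\partial\Om$, with no uniformity; turning this into a global weighted integrability statement would require either Egorov plus delicate control of the exceptional set in $\mu$-measure (not $\mathcal H$-measure), or a BV Hardy inequality that typically needs geometric hypotheses on $\Om$ (e.g., uniform fatness of the complement) beyond $\mathcal H(\partial\Om)<\infty$. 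In effect you would be reproving the nontrivial content of~\cite[Theorem~6.10]{LahSh}, and the ``near-boundary trace analysis'' you defer to is exactly the missing work. (A secondary, more easily repaired issue: if $\Om$ is unbounded, $\eta_\eps\psi$ need not have compact support, so an additional spatial truncation is required for $\psi_\eps$ to land in $\BV_c(\Om)$.) Replacing your $(1)\Rightarrow(5)$ step with the paper's $(1)\Rightarrow(3)$ via the cited approximation theorem closes the gap cleanly.
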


\begin{proof}
\hfill
\begin{itemize}
\item $(1)\implies (3)$: This follows from the fact that for any $\psi\in wk-\BV_0(\Omega)$
we find a sequence of functions $\psi_k\in \BV_c(\Omega)$ such that
$\psi_k\to\psi$ in $\BV(\Omega)$, by~\cite[Theorem~6.10]{LahSh}.
\item $(3)\implies (2)\implies (1)$: These implications are trivial.
\item $(3)\implies (4)$: This is trivial.
\item $(4)\implies (5)$: Let $\varphi\in \liploc(\Omega)$ with $\varphi-u\in wk-\BV_0(\Om)$.
Then clearly $\Vert D(\varphi-u)\Vert^s= \Vert Du\Vert^s$, so we have
\[
\Vert Du\Vert(\Omega)\le \Vert D(u+(\varphi-u))\Vert(\Omega)=\Vert D\varphi\Vert(\Omega).
\]
\item $(5)\implies (3)$: This is Lemma~\ref{lem:least gradient wrt lip functions implies least gradient}.
\end{itemize}
\end{proof}

Consider again the Dirichlet problem (T) of Definition~\ref{def:dirichlet}.
We say that $u\in \BV(\Omega)$
solves the Dirichlet problem (T)
relative to Lipschitz functions if
\[
\Vert Du\Vert(\Omega)+\int_{\partial^*\Omega}|T_+u-f|\, dP_+(\Omega,x)\le \Vert Dv\Vert(\Omega)
\]
for all $v\in\Lip(X)$ with $v=f$ in $X\setminus\Omega$. Note that the boundary term vanishes
for $v$.

\begin{proposition}
Let $\Omega\subset X$ satisfy the assumptions of Lemma~\ref{lem:use1},
and let $f\in\Lip(X)$ be boundedly supported.
If $u\in \BV(\Omega)$
solves the Dirichlet problem (T)
relative to Lipschitz functions, then $u$ solves the Dirichlet problem (T).
\end{proposition}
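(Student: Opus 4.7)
The proof should be essentially immediate from the machinery already built. My plan is to use Lemma~\ref{lem:use1} as the only substantive ingredient: it already converts any $\BV$ competitor into a sequence of Lipschitz competitors whose energies overshoot the (T)-energy by at most an infinitesimal amount. Since $u$ is assumed to beat every Lipschitz competitor, taking a limit immediately extends the bound to all $\BV$ competitors.

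More concretely, fix an arbitrary $v\in \BV(\Omega)$. By Lemma~\ref{lem:use1} (whose hypotheses are exactly the assumptions being imposed on $\Omega$), there is a sequence $(\psi_k)\subset \Lip(X)$ with $\psi_k=f$ on $X\setminus\Omega$, $\psi_k\to v$ in $L^1(\Omega)$, and
\[
\limsup_{k\to\infty}\|D\psi_k\|(\Omega)\le \|Dv\|(\Omega)+\int_{\partial^*\Omega}|T_+v-f|\, dP_+(\Omega,\cdot).
\]
Since each $\psi_k$ is a Lipschitz function with $\psi_k=f$ off $\Omega$, the hypothesis that $u$ solves problem (T) relative to Lipschitz functions gives
\[
\|Du\|(\Omega)+\int_{\partial^*\Omega}|T_+u-f|\, dP_+(\Omega,\cdot)\le \|D\psi_k\|(\Omega)
\]
for every $k\in\N$. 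Taking $\limsup_{k\to\infty}$ on the right and combining with the previous estimate yields
\[
\|Du\|(\Omega)+\int_{\partial^*\Omega}|T_+u-f|\, dP_+(\Omega,\cdot)\le \|Dv\|(\Omega)+\int_{\partial^*\Omega}|T_+v-f|\, dP_+(\Omega,\cdot),
\]
which is exactly the definition of $u$ being a solution to the full Dirichlet problem (T) in the sense of Definition~\ref{def:dirichlet}.

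There is no genuine obstacle here; the only nontrivial step is the appeal to Lemma~\ref{lem:use1}, which in turn already encapsulates the careful trace/inner-perimeter analysis developed in Sections~5 and~6. In particular, one does not need to revisit the discrete-convolution construction or the comparability of $P_+$ with $P$, as these have already been absorbed into the lemma. The statement is therefore best understood as saying that Lipschitz functions agreeing with $f$ off $\Omega$ form a dense class of competitors for the (T)-functional, so restricting the minimization to them loses no information.
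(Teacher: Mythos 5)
Your proof is correct and follows the paper's own argument exactly: fix $v\in\BV(\Omega)$, invoke Lemma~\ref{lem:use1} to produce Lipschitz competitors $\psi_k$ with $\psi_k=f$ off $\Omega$ and $\limsup_k\Vert D\psi_k\Vert(\Omega)\le \Vert Dv\Vert(\Omega)+\int_{\partial^*\Omega}|T_+v-f|\,dP_+(\Omega,\cdot)$, test the hypothesis against each $\psi_k$, and pass to the limit. If anything, your write-up is slightly more careful than the paper's, which writes a $\lim$ and an equality where Lemma~\ref{lem:use1} only guarantees a $\limsup$ and an inequality (which is all that is needed).
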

\begin{proof}
Pick a sequence of functions $\psi_k\in\Lip(X)$ given by Lemma~\ref{lem:use1}.
Then
\begin{align*}
&\Vert Du\Vert(\Omega)+ \int_{\partial^*\Omega}|T_+u-f|\, dP_+(\Omega,\cdot)
\le \lim_{k\rightarrow\infty}\Vert D\psi_k\Vert(\Omega)\\
&\qquad\qquad\qquad =\Vert Dv\Vert(\Omega)+\int_{\partial^*\Omega}|T_+v-f|\, dP_+(\Omega,\cdot).
\end{align*}
\end{proof}

\section*{Appendix: Proof that $P_+(\Om,\cdot)$ is a Radon measure}
\renewcommand{\thesection}{\Alph{section}}
\setcounter{section}{1}
\setcounter{equation}{0}

Let $\Omega\subset X$ be an open set with $P_+(\Omega,X)<\infty$.
In showing that $P_+(\Omega,\cdot)$ is a Radon measure on $X$, we rely on the following theorem,
due to De Giorgi and Letta~\cite[Theorem~5.1(3,5)]{DeLett},
whose proof can also be found in e.g.~\cite[Theorem 1.53]{AFP}. 

\begin{theorem}\label{thm:DeGiorgiLetta}
Let $\nu$ be a function defined on the open sets of $X$ taking values in $[0,\infty]$
such that $\nu(\emptyset)=0$, $\nu(U_1)\le \nu(U_2)$ for any open sets $U_1\subset U_2$,
and such that the following properties hold:
\begin{enumerate}
\item If $U_1,U_2\subset X$ are open sets, then $\nu(U_1\cup U_2)\le \nu(U_1)+\nu(U_2)$.
\item If $U_1,U_2\subset X$ are disjoint open sets, then
\[
\nu(U_1\cup U_2)\ge \nu(U_1)+\nu(U_2).
\]
\item For any open set $U\subset X$, $\nu(U)=\sup\{\nu(V),\,V\textrm{ open, } V\Subset U\}$.
\end{enumerate}
Then the extension of $\nu$ to all sets in $X$, defined by
\[
\nu(A):=\inf\{\nu(U):\, U\textrm{ open, }U\supset A\},\quad A\subset X,
\]
is a Borel outer measure.
\end{theorem}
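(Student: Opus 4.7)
My plan is to extend $\nu$ to all subsets by outer regularity,
\[
\widetilde{\nu}(A) := \inf\{\nu(U) : A \subset U, \; U \text{ open}\},
\]
and verify both that $\widetilde{\nu}$ is an outer measure on $X$ and that every open (hence every Borel) set is Carath\'eodory measurable. Monotonicity of $\widetilde{\nu}$ and $\widetilde{\nu}(\emptyset) = 0$ are immediate from the corresponding properties of $\nu$.

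The first step is countable subadditivity of $\widetilde{\nu}$. Given sets $A_n \subset X$, I pick open $U_n \supset A_n$ with $\nu(U_n) \le \widetilde{\nu}(A_n) + \eps 2^{-n}$, so that $\widetilde{\nu}(\bigcup_n A_n) \le \nu(U)$ for $U := \bigcup_n U_n$. It then suffices to prove the countable open subadditivity $\nu(U) \le \sum_n \nu(U_n)$. Using inner regularity (3), $\nu(U) = \sup_{V \Subset U} \nu(V)$; for each open $V \Subset U$ the compactness of $\overline{V}$ yields a finite subcover $V \subset U_1 \cup \cdots \cup U_N$, and iterating (1) gives $\nu(V) \le \sum_{n=1}^N \nu(U_n) \le \sum_n \nu(U_n)$. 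Taking the supremum over $V$ finishes this step and shows $\widetilde{\nu}$ is an outer measure.

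The second step verifies Carath\'eodory's criterion for an open set $U$: for every $A \subset X$, I must show $\widetilde{\nu}(A) \ge \widetilde{\nu}(A \cap U) + \widetilde{\nu}(A \setminus U)$. Given $\eps > 0$, pick an open $W \supset A$ with $\nu(W) \le \widetilde{\nu}(A) + \eps$, and apply (3) to the open set $W \cap U$ to obtain an open $V$ with $V \Subset W \cap U$ and $\nu(V) \ge \nu(W \cap U) - \eps$. Then $V$ and $W \setminus \overline{V}$ are disjoint open subsets of $W$, so (2) together with monotonicity gives
\[
\nu(V) + \nu(W \setminus \overline{V}) \le \nu(V \cup (W \setminus \overline{V})) \le \nu(W).
\]
Because $\overline{V} \subset U$ forces $A \setminus U \subset W \setminus \overline{V}$, while $A \cap U \subset W \cap U$ trivially, I deduce
\[
\widetilde{\nu}(A \cap U) + \widetilde{\nu}(A \setminus U) \le \nu(W \cap U) + \nu(W \setminus \overline{V}) \le (\nu(V) + \eps) + (\nu(W) - \nu(V)) \le \widetilde{\nu}(A) + 2\eps.
\]
Letting $\eps \to 0$ completes the measurability check.

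The main obstacle lies in the second step, where one must insert a compact buffer $\overline{V} \setminus V$ between the ``inside'' and ``outside'' pieces of $W$ relative to $U$, so that the disjoint-opens superadditivity (2) may actually be invoked; inner regularity (3) is precisely what permits selecting $V$ whose $\nu$-mass is arbitrarily close to $\nu(W \cap U)$. With both steps established, Carath\'eodory's construction yields that $\widetilde{\nu}$ is a Borel outer measure, which is the content of the theorem.
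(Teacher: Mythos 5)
Your argument is correct and is precisely the standard De~Giorgi--Letta proof that the paper delegates to~\cite{DeLett} and~\cite[Theorem~1.53]{AFP}: countable subadditivity of the outer extension via compactness and inner regularity, and Carath\'eodory measurability of open sets by inserting a compactly contained $V\Subset W\cap U$ so that $V$ and $W\setminus\overline V$ become disjoint open sets to which the superadditivity hypothesis applies. The only unstated (but harmless) point is that when $\widetilde{\nu}(A)=\infty$ the Carath\'eodory inequality is trivial, so one may assume $\nu(W)<\infty$ before subtracting $\nu(V)$.
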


Clearly we have $P_+(\Omega,\emptyset)=0$ and  $P_+(\Omega,U_1)\le P_+(\Omega,U_2)$ for any open sets $U_1\subset U_2$.
We also note that if $U_1,U_2\subset X$ are two disjoint open sets, then
\begin{equation}\label{eq:additivity in disjoint open sets}
P_+(\Omega,U_1\cup U_2)=P_+(\Omega,U_1)+P_+(\Omega,U_2),
\end{equation}
verifying property (2).
Next we prove two ``pasting" lemmas, which are analogs of~\cite[Lemma~3.3]{Miranda03}.
In this section, $g_u$ always denotes the minimal $1$-weak upper gradient of $u$.

\begin{lemma}\label{lem:paste}
Let $U_1,U_2\subset X$ be open sets such that $U_1$ is bounded and
$\partial U_1\cap\partial U_2=\emptyset$. Then there exists a function
$\eta\in\Lip_c(X)$ with $0\le \eta\le 1$ such that whenever $u\in\liploc(U_1)$
and $v\in \liploc(U_2)$, the function $w:=\eta u+(1-\eta)v\in\liploc(U_1\cup U_2)$ satisfies
\[
\int_{U_1\cup U_2} g_w\,d\mu\le \int_{U_1} g_u\,d\mu+\int_{U_2} g_v\,d\mu+C(U_1,U_2)\int_{U_1\cap U_2}|u-v|\, d\mu,
\]
and whenever $h\in L_{\loc}^1(U_1\cup U_2)$,
\[
\int_{U_1\cup U_2} |w-h|\, d\mu\le \int_{U_1}|u-h|\, d\mu+\int_{U_2}|v-h|\, d\mu,
\]
where $C(U_1,U_2)$ depends solely on $\dist(\partial U_1,\partial U_2)$ and is independent of $u,v$.
\end{lemma}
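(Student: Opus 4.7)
The plan is to construct a cutoff $\eta\in\Lip_c(X)$ with $0\le\eta\le 1$ such that $\eta\equiv 1$ on $U_1\setminus U_2$ and $\eta\equiv 0$ on $U_2\setminus U_1$, whose Lipschitz constant is controlled by $d_0:=\dist(\partial U_1,\partial U_2)$; note that $d_0>0$ since $\partial U_1$ is compact (as $U_1$ is bounded) and disjoint from the closed set $\partial U_2$. Given such an $\eta$, the function $w=\eta u+(1-\eta)v$ is well defined on $U_1\cup U_2$ (with the conventions $\eta u=0$ where $\eta=0$, and $(1-\eta)v=0$ where $\eta=1$) and is locally Lipschitz there. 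Both integral inequalities will then drop out of the Leibniz rule from~\cite[Lemma~2.18]{BB} together with convexity of $|\cdot|$.

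The main step is the construction of $\eta$. Let $E_1:=\overline{U_1\setminus U_2}$ and $E_0:=\overline{U_2\setminus U_1}$. These closed sets are disjoint: since $X\setminus U_i$ is closed, $E_1\subset\overline{U_1}\cap(X\setminus U_2)$ and $E_0\subset\overline{U_2}\cap(X\setminus U_1)$, so any common point would lie in $(\overline{U_1}\setminus U_1)\cap(\overline{U_2}\setminus U_2)=\partial U_1\cap\partial U_2=\emptyset$. Because $E_1$ is compact and $E_0$ is closed, $\delta:=\dist(E_1,E_0)>0$; moreover, the $(1,1)$-Poincar\'e inequality implies $X$ is quasiconvex, and a standard path-crossing argument (any quasi-geodesic from $U_1\setminus U_2$ to $U_2\setminus U_1$ must hit both $\partial U_1$ and $\partial U_2$) shows $\delta\ge c\,d_0$ for a constant $c>0$ depending only on the quasiconvexity constant of $X$. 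I then set
\[
\eta(x):=\chi(x)\cdot\frac{\dist(x,E_0)}{\dist(x,E_0)+\dist(x,E_1)},
\]
where $\chi\in\Lip_c(X)$ is an auxiliary cutoff equal to $1$ on $\overline{U_1}$ and supported in its $\delta/4$-neighborhood (which is relatively compact, $X$ being proper). The Urysohn-type ratio is well defined (since $E_0\cap E_1=\emptyset$), equals $1$ on $E_1$ and $0$ on $E_0$, and has Lipschitz constant of order $1/\delta$; hence $\eta\in\Lip_c(X)$ has all the required properties, and its global Lipschitz constant $C(U_1,U_2)$ is bounded in terms of $d_0$ alone.

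With $\eta$ in hand, I decompose $U_1\cup U_2$ into the measurable pieces $U_1\setminus U_2$, $U_2\setminus U_1$, and $U_1\cap U_2$. On the first $w=u$, on the second $w=v$, and on the intersection the Leibniz rule yields $g_w\le\eta g_u+(1-\eta)g_v+g_\eta|u-v|$; combining these with $0\le\eta\le 1$ and $g_\eta\le C(U_1,U_2)$ gives the first inequality after summing. The second inequality is immediate from the pointwise estimate $|\eta a+(1-\eta)b-h|\le\eta|a-h|+(1-\eta)|b-h|$ on the intersection, together with $w=u$ on $U_1\setminus U_2$ and $w=v$ on $U_2\setminus U_1$. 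The principal obstacle, and the only nontrivial point, is the quantitative separation $\delta=\dist(E_1,E_0)\ge c\,d_0$; in a general metric space this requires the quasiconvexity supplied by the standing Poincar\'e inequality, whereas disjointness alone (which would only yield a constant depending on $U_1,U_2$) is straightforward.
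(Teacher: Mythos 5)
Your overall strategy is the same as the paper's: build a Lipschitz cutoff $\eta$ with $\eta\equiv 1$ on $U_1\setminus U_2$ and $\eta\equiv 0$ on $U_2\setminus U_1$, then run the Leibniz rule. However, the specific Urysohn cutoff you propose does not establish the claimed $w\in\liploc(U_1\cup U_2)$, and this is a genuine gap. The ratio $\dist(x,E_0)/(\dist(x,E_0)+\dist(x,E_1))$ equals $1$ \emph{exactly} on $E_1$ and $0$ \emph{exactly} on $E_0$; it is not constant on any open neighborhood of either set. Now take $x\in U_1\cap\partial U_2\subset E_1$. For $y\in U_1\cap U_2$ near $x$ one only has $1-\eta(y)=O(d(x,y))$, not $1-\eta(y)\equiv 0$, while $v\in\liploc(U_2)$ is allowed to blow up arbitrarily fast as $y\to\partial U_2$. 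So $(1-\eta)(y)v(y)$ --- and hence $w(y)$ --- need not even converge to $u(x)$, i.e.\ $w$ can fail to be continuous at $x$, let alone locally Lipschitz. The same problem occurs at $U_2\cap\partial U_1$ via the factor $\eta u$. Since $w\in\liploc(U_1\cup U_2)$ is part of the statement and is what makes $g_w$ meaningful, the argument as written is incomplete.

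The fix is cheap and you already have all the ingredients: having established $\delta:=\dist(E_1,E_0)>0$, do not use the Urysohn ratio but instead interpolate on $\dist(\cdot,E_1)$ with a truncation, e.g.\ $\eta(x):=\max\bigl\{0,\min\bigl\{1,\,2-3\,\dist(x,E_1)/\delta\bigr\}\bigr\}$, cut off by $\chi$ if compact support is wanted. Then $\eta\equiv 1$ on the open set $\{\dist(\cdot,E_1)<\delta/3\}\supset E_1$ and $\eta\equiv 0$ on the open set $\{\dist(\cdot,E_1)>2\delta/3\}\supset E_0$, so that $(1-\eta)v$ and $\eta u$ each vanish identically on open neighborhoods of the problematic boundary pieces; in particular $\{0<\eta<1\}\cap(U_1\cup U_2)$ is a compact subset of $U_1\cap U_2$ at positive distance from $E_1\cup E_0$, $w=u$ on an open neighborhood of $U_1\setminus U_2$, $w=v$ on an open neighborhood of $U_2\setminus U_1$, and the Leibniz estimate and the second (convexity) inequality go through exactly as you describe. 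Two smaller remarks: (i) the locality claim $g_w=g_u$ a.e.\ on $U_1\setminus U_2$ does hold (minimal weak upper gradients agree a.e.\ on the set $\{w=u\}$, not merely on its interior), but you should cite that fact rather than just assert it; (ii) the quasiconvexity step giving $\delta\ge c\,d_0$ is correct and more care than the paper takes --- the paper's ``$C(U_1,U_2)$ depends solely on $\dist(\partial U_1,\partial U_2)$'' should in any case be read modulo the standing structural constants $C_d,C_P,\lambda$, and for the appendix applications the precise dependence of $C(U_1,U_2)$ is immaterial.
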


\begin{proof}
Let $\eta$ be a Lipschitz map from $U_1\cup U_2$ to $[0,1]$ such that $\eta=1$ on $U_1\setminus U_2$ and $\eta=0$ on
$U_2\setminus U_1$. Then the desired results follow from 
the Leibniz rule~\cite[Lemma 2.18]{BB}.
\end{proof}

\begin{lemma}\label{lem:paste2}
Let $U_1'\Subset U_1$ and $U_2'\Subset U_2$ be open sets. Then there exists a function $\eta\in\Lip_c(U_1)$ with $0\le \eta\le 1$ such that whenever $u\in\liploc(U_1)$ and $v\in \liploc(U_2)$, the function $w:=\eta u+(1-\eta)v\in\Lip(U_1'\cup U_2')$ satisfies
\[
\int_{U_1'\cup U_2'}g_w\,d\mu\le \int_{U_1} g_u\,d\mu+\int_{U_2}g_v\,d\mu+C(U_1',U_1)
\int_{U_1\cap U_2} |u-v|\,d\mu.
\]
\end{lemma}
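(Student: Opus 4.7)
The plan is to build a Lipschitz cutoff $\eta$ that equals $1$ on $\overline{U_1'}$ and is compactly supported in $U_1$, and then paste via $w:=\eta u+(1-\eta)v$; the design point is to arrange that wherever $\eta$ takes an intermediate value inside $U_1'\cup U_2'$, both $u$ and $v$ are defined. Since $\overline{U_1'}\Subset U_1$ the distance $\delta:=\dist(\overline{U_1'},X\setminus U_1)$ is positive, and I would set
\[
\eta(x):=\max\{0,\,1-\delta^{-1}\dist(x,\overline{U_1'})\},
\]
which lies in $\Lip_c(U_1)$, satisfies $0\le\eta\le 1$ with $\eta\equiv 1$ on $\overline{U_1'}$ and $\eta\equiv 0$ off $U_1$, and has $g_\eta\le\delta^{-1}\ch_{U_1\setminus\overline{U_1'}}$. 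I take $C(U_1',U_1):=\delta^{-1}$.

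Next I would verify that $w$ is well-defined and Lipschitz on $U_1'\cup U_2'$ by partitioning it as $U_1'\cup(U_2'\cap U_1\setminus U_1')\cup(U_2'\setminus U_1)$. On $U_1'$ we have $\eta\equiv 1$, so $w=u$, which is Lipschitz on the compact set $\overline{U_1'}\Subset U_1$; on $U_2'\setminus U_1\subset X\setminus U_1$ we have $\eta\equiv 0$, so $w=v$, which is Lipschitz on $\overline{U_2'}\Subset U_2$; on the middle piece, which lies in $U_1\cap U_2$, all three of $u,v,\eta$ are Lipschitz, so $w$ is Lipschitz there too. The three pieces match continuously across their common interfaces, giving $w\in\Lip(U_1'\cup U_2')$. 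The quantitative bound then follows from the Leibniz-type estimate of~\cite[Lemma~2.18]{BB}: writing $w=v+\eta(u-v)$, one obtains on $U_1\cap U_2$ the pointwise inequality
\[
g_w\le \eta\,g_u+(1-\eta)\,g_v+g_\eta\,|u-v|,
\]
which collapses to $g_w\le g_u$ on $U_1'$ (since $\eta=1$, $g_\eta=0$) and to $g_w\le g_v$ on $U_2'\setminus U_1$ (since $\eta=0$, $g_\eta=0$).

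Finally I would integrate along the three-piece partition. The crucial observation is that inside $U_1'\cup U_2'$ the set $\{g_\eta>0\}$ is contained in $U_2'\cap U_1\setminus U_1'\subset U_1\cap U_2$, so the $|u-v|$ term arises only where both functions are defined; enlarging the first two pieces of the integral to $U_1$ respectively $U_2$ and bounding $g_\eta\le\delta^{-1}$ on its support then yields the asserted estimate. The main difficulty is purely bookkeeping rather than a deep estimate: to ensure $w$ is even defined on $U_1'\setminus U_2$ (which may be nonempty) we need $\eta\equiv 1$ on all of $U_1'$ rather than merely on some smaller subset, and to route the error through $|u-v|$ we need $\supp g_\eta\cap(U_1'\cup U_2')\subset U_1\cap U_2$. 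The single choice of $\eta$ above handles both requirements at once, so I do not anticipate any serious obstacle beyond setting up the partition correctly.
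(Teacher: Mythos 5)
Your proof is correct and follows essentially the same route as the paper's: the paper simply takes $\eta$ with $0\le\eta\le 1$, $\eta=1$ on $U_1'$, $\eta=0$ off $U_1$, and invokes the Leibniz rule, which is exactly your construction, only with the bookkeeping spelled out. One tiny quibble: with the exact formula $\eta(x)=\max\{0,1-\delta^{-1}\dist(x,\overline{U_1'})\}$, the set $\{\eta>0\}=\{\dist(\cdot,\overline{U_1'})<\delta\}$ lies in $U_1$, but its closure could touch a point of $\partial U_1$ where the distance $\delta$ is attained, so $\supp\eta$ need not be a compact subset of $U_1$; replacing $\delta^{-1}$ by $2\delta^{-1}$ (or any factor larger than $\delta^{-1}$) fixes this at the cost of a harmless factor in $C(U_1',U_1)$.
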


\begin{proof}
Let $\eta\in\Lip_c(X)$ such that $0\le\eta\le 1$, $\eta=1$ in $U_1'$,
and $\eta=0$ in $X\setminus U_1$. Then the desired result again follows from the Leibniz rule.
\end{proof}

For an open set $W\subset X$ and $\delta>0$, let
\[
W_\delta:=\{x\in W :\, \dist(x,X\setminus W)>\delta\}.
\]

\begin{lemma}\label{lem:pasting with annulus}
Let $W\subset X$ be open and $0<\delta_1<\delta_2$. Then
\[
P_+(\Om, W)\le P_+(\Om,W_{\delta_1})+P_+(\Om, W\setminus\overline{W}_{\delta_2}).
\]
\end{lemma}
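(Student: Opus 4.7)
The plan is as follows. First I would observe that $W = W_{\delta_1} \cup (W\setminus \overline{W}_{\delta_2})$: every $x\in W$ has $\dist(x,X\setminus W)>0$, and if this distance is $\le \delta_1<\delta_2$ then $x\in W\setminus \overline{W}_{\delta_2}$. Moreover, the two sets form an open cover of $W$ whose ``interior parts'' meet in the annular overlap $W_{\delta_1}\setminus \overline{W}_{\delta_2}$, and the separating sets $\overline{W}_{\delta_2}$ and $X\setminus W_{\delta_1}$ have mutual distance at least $\delta_2-\delta_1>0$. We may assume both $P_+(\Om,W_{\delta_1})$ and $P_+(\Om,W\setminus\overline{W}_{\delta_2})$ are finite; otherwise the conclusion is trivial.

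Next, by the very definition of $P_+$, pick sequences $(\Psi_i^1)\subset \liploc(W_{\delta_1})$ with $\Psi_i^1=0$ in $W_{\delta_1}\setminus \Om$, $\Psi_i^1-\ch_\Om\to 0$ in $L^1(W_{\delta_1})$, and $\int_{W_{\delta_1}}g_{\Psi_i^1}\,d\mu\to P_+(\Om,W_{\delta_1})$, and analogously $(\Psi_i^2)\subset \liploc(W\setminus\overline{W}_{\delta_2})$ for the second open set. Choose a Lipschitz cutoff $\eta\colon X\to[0,1]$ with $\eta=1$ on $\overline{W}_{\delta_2}$, $\eta=0$ on $X\setminus W_{\delta_1}$, and $g_\eta\le 2/(\delta_2-\delta_1)$. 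Define the paste
\[
w_i := \eta\,\Psi_i^1 + (1-\eta)\,\Psi_i^2,
\]
interpreting $\eta\Psi_i^1$ as $0$ outside $W_{\delta_1}$ and $(1-\eta)\Psi_i^2$ as $0$ outside $W\setminus\overline{W}_{\delta_2}$. Since these vanishings happen in open neighborhoods of the respective complements, $w_i\in\liploc(W)$. Also $w_i=0$ on $W\setminus\Om$ (because at each point at most the two ingredients that are defined vanish, and the vanishing one drops out via its coefficient elsewhere), and
\[
|w_i-\ch_\Om|\le \eta\,|\Psi_i^1-\ch_\Om|\,\ch_{W_{\delta_1}} + (1-\eta)\,|\Psi_i^2-\ch_\Om|\,\ch_{W\setminus\overline{W}_{\delta_2}}
\]
so $w_i-\ch_\Om\to 0$ in $L^1(W)$. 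Hence $w_i$ is admissible in the definition of $P_+(\Om,W)$.

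Finally, apply the Leibniz rule from~\cite[Lemma~2.18]{BB} to get
\[
g_{w_i}\le \eta\,g_{\Psi_i^1}\,\ch_{W_{\delta_1}} + (1-\eta)\,g_{\Psi_i^2}\,\ch_{W\setminus\overline{W}_{\delta_2}} + g_\eta\,|\Psi_i^1-\Psi_i^2|\,\ch_{W_{\delta_1}\setminus\overline{W}_{\delta_2}},
\]
integrate over $W$, and pass to the limit. The first two summands contribute at most $P_+(\Om,W_{\delta_1})$ and $P_+(\Om,W\setminus\overline{W}_{\delta_2})$ respectively. The cross term is bounded by
\[
\tfrac{2}{\delta_2-\delta_1}\int_{W_{\delta_1}\setminus\overline{W}_{\delta_2}}\bigl(|\Psi_i^1-\ch_\Om|+|\Psi_i^2-\ch_\Om|\bigr)\,d\mu,
\]
which tends to $0$ as $i\to\infty$. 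From the definition of $P_+$,
\[
P_+(\Om,W)\le \liminf_{i\to\infty}\int_W g_{w_i}\,d\mu\le P_+(\Om,W_{\delta_1})+P_+(\Om,W\setminus\overline{W}_{\delta_2}).
\]

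The only slightly delicate step is verifying that the pasted function $w_i$ genuinely belongs to $\liploc(W)$ and vanishes on $W\setminus\Om$; this is where the geometric condition $\partial\overline{W}_{\delta_2}$ lying a positive distance from $\partial W_{\delta_1}$ is essential, and where one must be careful with the convention for interpreting $\eta\Psi_i^1$ and $(1-\eta)\Psi_i^2$ outside their natural domains. Everything else is a routine pasting argument in the spirit of Lemma~\ref{lem:paste}.
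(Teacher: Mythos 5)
Your proof is correct and takes essentially the same approach as the paper: the paper's proof of this lemma is exactly a pasting argument with $U_1 = W_{\delta_1}$ and $U_2 = W\setminus\overline{W}_{\delta_2}$, carried out by invoking the preceding Lemma~\ref{lem:paste}, whereas you re-derive that pasting lemma inline by constructing the cutoff $\eta$ explicitly and applying the Leibniz rule directly. The one small detail you gesture at but do not spell out -- that $\eta$ should be chosen so that $\eta\equiv 1$ on an open neighbourhood of $\overline W_{\delta_2}$ and $\eta\equiv 0$ on an open neighbourhood of $X\setminus W_{\delta_1}$ (rather than merely on those sets themselves), so that the pasted function is genuinely $\liploc(W)$ -- is easy to arrange from a distance function and is already the content of Lemma~\ref{lem:paste} in the paper.
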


\begin{proof}
We set $U_1=W_{\delta_1}$ and $U_2=W\setminus \overline{W}_{\delta_2}$.
We can find sequences $(u_i)\subset \liploc(U_1)$ and $(v_i)\subset \liploc(U_2)$ such that
$u_i$ vanishes in $U_1\setminus\Om$, $v_i$ vanishes in $U_2\setminus\Om$,
$u_i-\ch_\Om\to 0$ in $L^1(U_1)$, $v_i-\ch_\Om\to 0$ in $L^1(U_2)$, and
\[
\lim_{i\to\infty}\int_{U_1}g_{u_i}\,d\mu=P_+(\Om,U_1),\qquad \
\lim_{i\to\infty}\int_{U_2}g_{v_i}\,d\mu= P_+(\Om,U_2).
\]
Applying Lemma~\ref{lem:paste} with $u=u_i$ and $v=v_i$, we obtain functions
$w_i\in \liploc(U_1\cup U_2)=\liploc(W)$
such that $w_i=u_i$ in $U_1\setminus U_2$, $w_i=v_i$ in $U_2\setminus U_1$, 
\[
   \int_W|w_i-\ch_\Om|\, d\mu\le \int_{U_1}|u_i-\ch_\Om|\, d\mu+\int_{U_2}|v_i-\ch_\Om|\, d\mu,
\]
and
\[
\int_W g_{w_i}\,d\mu\le \int_{U_1} g_{u_i}\,d\mu+\int_{U_2} g_{v_i}\,d\mu
+C(U_1,U_2)\int_{U_1\cap U_2}|u_i-v_i|\, d\mu.
\]
Furthermore, by the construction of $w_i$, we see that $w_i$ vanishes in
$W\setminus\Om$.
From the first of the above two inequalities we see that $w_i-\ch_\Om\to 0$ in $L^1(W)$, and hence
by the second of the above two inequalities,
\begin{align*}
P_+(\Om,W)
&\le \liminf_{i\to\infty}\int_W g_{w_i}\,d\mu\\
&\le P_+(\Om,U_1)+P_+(\Om, U_2)
  +\limsup_{i\to\infty} \int_{U_1\cap U_2}|u_i-v_i|\, d\mu.
\end{align*}
Note that 
\[
 \int_{U_1\cap U_2}|u_i-v_i|\, d\mu\le \int_{U_1\cap U_2}|u_i-\ch_\Om|+|v_i-\ch_\Om|\, d\mu\to 0
\]
as $i\to\infty$.
The desired conclusion now follows.
\end{proof}

\begin{lemma}\label{lem:thin annulus has small measure}
Let $W\subset X$ be an open set and let $\eps>0$. Then for some $\delta>0$ we have
\[
P_+(\Om, W\setminus \overline{W}_{\delta})<\eps.
\]
\end{lemma}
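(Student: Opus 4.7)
The plan is to partition $W$, up to a $\mu$-negligible set, into a countable family of pairwise disjoint open annular shells parameterized by their depth from $\partial W$, and then use the disjoint additivity~\eqref{eq:additivity in disjoint open sets} of $P_+(\Om,\cdot)$ together with the finiteness of $P_+(\Om,X)$ to conclude that the $P_+$-mass of the regions near $\partial W$ is small.

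Set $h(x) := \dist(x, X\setminus W)$, and choose a sequence $\delta_n \downarrow 0$ so that each level set $\{h = \delta_n\}$ is $\mu$-null. This is possible because the level sets $\{h = t\}$ for distinct $t>0$ are pairwise disjoint in $W$ and $\mu$ is locally finite, so at most countably many of them carry positive $\mu$-mass (localizing to bounded pieces if $W$ is unbounded). Define the open shells
\[
A_n := W_{\delta_{n+1}}\setminus\overline{W_{\delta_n}}, \qquad n\ge 1,
\]
which are pairwise disjoint since each $A_n$ lies in the $h$-band $\{\delta_{n+1} < h \le \delta_n\}$. Their union agrees with $W\setminus\overline{W_{\delta_1}}$ modulo the $\mu$-null set $\bigcup_n \{h = \delta_n\}$. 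Iterating~\eqref{eq:additivity in disjoint open sets} for finite unions and using monotonicity of $P_+(\Om,\cdot)$ gives $\sum_{n=1}^N P_+(\Om, A_n) \le P_+(\Om, W) \le P_+(\Om, X) < \infty$ for every $N$, so $\sum_n P_+(\Om, A_n)$ converges and the tails $\sum_{m\ge n} P_+(\Om, A_m)$ vanish as $n\to\infty$.

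The remaining, and main, task is to establish the reverse (subadditive) bound
\[
P_+(\Om, W\setminus\overline{W_{\delta_n}}) \le \sum_{m\ge n} P_+(\Om, A_m) + o(1) \qquad (n\to\infty).
\]
For each $m$, take a near-minimizing sequence $(\Psi_{m,i})\subset\liploc(A_m)$ for $P_+(\Om, A_m)$, and multiply by a Lipschitz cutoff $\eta_m$ compactly supported in $A_m$ (in the spirit of Lemma~\ref{lem:paste2}), so that $\eta_m\Psi_{m,i}$ vanishes in a neighborhood of $\partial A_m$. Since the $A_m$'s are pairwise disjoint, the sum $\tilde\Psi_i := \sum_{m\ge n}\eta_m\Psi_{m,i_m}$ extends by zero across the $\mu$-null level-set interfaces to an element of $\liploc(W\setminus\overline{W_{\delta_n}})$, approximates $\ch_\Om$ in $L^1$, and has upper-gradient integral dominated by $\sum_{m\ge n}\int_{A_m}g_{\eta_m\Psi_{m,i_m}}\,d\mu$.

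The \emph{main obstacle} is to choose the cutoff scales for each $\eta_m$ and the indices $i_m$ diagonally so that both the $L^1$-error from the cutoff truncation and the extra upper-gradient mass coming from the Lipschitz gradients of $\eta_m$ are summable in $m$ with sum tending to zero as $n\to\infty$. The finiteness of $\sum_m P_+(\Om, A_m)$ together with $\sum_m \mu(A_m) \le \mu(W)$ (on bounded pieces of $W$) drive this diagonalization. Once it is carried out, one obtains $P_+(\Om, W\setminus\overline{W_{\delta_n}}) \to 0$, proving Lemma~\ref{lem:thin annulus has small measure}.
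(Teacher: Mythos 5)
Your plan has a genuine gap at precisely the step you flag as the ``main obstacle.'' The finiteness of $\sum_m P_+(\Om,A_m)$ and $\sum_m\mu(A_m)$ does \emph{not} control the cutoff error. Since your shells $A_m$ are pairwise disjoint, the cutoff $\eta_m$ must decay to zero \emph{inside} $A_m$, so the Leibniz-rule error contributes a term of the form $\int_{A_m} g_{\eta_m}|\Psi_{m,i_m}|\,d\mu$. Because $\Psi_{m,i_m}\to\ch_\Om$ in $L^1(A_m)$, this term does not vanish as $i_m\to\infty$; to leading order it is $\int g_{\eta_m}\ch_\Om\,d\mu\approx\mu(\text{cutoff band}\cap\Om)/\text{width}$, a Minkowski-content-type quantity that need not be summable in $m$ and certainly isn't controlled by $\mu(A_m)$ alone. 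Moreover, you need the combined function $\tilde\Psi_i$ to approximate $\ch_\Om$ in $L^1$, which forces the ``zero zones'' near the interfaces to have small measure; shrinking the cutoff bands to achieve this only makes $g_{\eta_m}$ larger, so the two requirements pull against each other and there is no diagonalization that reconciles them in general.

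The paper avoids this by using \emph{overlapping} shells $V_k = W_{\delta_{2k}}\setminus\overline{W}_{\delta_{2k-3}}$, so that $V_k$ and $V_{k+1}$ share a band of fixed width. Applying Lemma~\ref{lem:paste} there, the Leibniz error becomes $\int_{V_k\cap V_{k+1}} g_\eta |u_i - v_i|\,d\mu$, where $u_i$ and $v_i$ \emph{both} tend to $\ch_\Om$ on the overlap, so $|u_i-v_i|\to 0$ in $L^1$ while $g_\eta$ stays bounded by a constant depending only on the overlap width. The disjoint additivity~\eqref{eq:additivity in disjoint open sets} is then invoked separately on the even- and odd-indexed subfamilies (each pairwise disjoint) to get $\sum_k P_+(\Om,V_k)<\infty$. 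Your first half (disjoint shells and summability of $P_+$-masses) is fine; it is the second, gluing, half that must use overlapping annuli rather than disjoint ones with cutoffs, and that is where the argument as proposed breaks down.
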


\begin{proof}
Let $(\delta_k)$ be a strictly decreasing sequence of positive numbers such that 
$\lim_{k\to\infty}\delta_k=0$. For integers $k\ge 2$, let $V_k:=W_{\delta_{2k}}\setminus\overline{W}_{\delta_{2k-3}}$. 
Note then that $\{V_{2k}\}_{k\in\N}$ is a collection of pairwise disjoint open sets, 
$\{V_{2k+1}\}_{k\in\N}$ is a collection of pairwise disjoint open sets, and $V_k\cap V_j$ is non-empty if and only if 
$|k-j|\le 1$. From
\eqref{eq:additivity in disjoint open sets} we see that 
\[
\sum_{k=1}^\infty P_+(\Om, V_{2k}) \le P_+\left(\Om,\bigcup_{k=1}^{\infty}V_{2k}\right)\le P_+(\Om,W)<\infty,
\]
and so we can find $k_1\in\N$ such that 
\begin{equation*}
\sum_{k=k_1}^\infty P_+(\Om, V_{2k})<\frac{\eps}{2}.
\end{equation*}
Analogously, we can find $k_2\in\N$ such that 
\begin{equation*}
\sum_{k=k_2}^\infty P_+(\Om, V_{2k+1})<\frac{\eps}{2}.
\end{equation*}
Thus by choosing $k_{\eps}=2\max\{k_1,k_2\}+1$, we obtain
\begin{equation}\label{eq:tail-end}
\sum_{k=k_\eps}^\infty P_+(\Om,V_k)<\eps.
\end{equation}
For each $k\ge k_{\eps}$ we can choose a
sequence $(u_{k,i})\subset \liploc(V_k)$ such that $u_{k,i}$ vanishes in 
$V_k\setminus\Om$, 
\[
 \int_{V_k}|u_{k,i}-\ch_\Om|\, d\mu\le 2^{-i-k}\, \min\left\{1,\frac{1}{C\left(\bigcup_{j=k_\eps}^k V_j, V_{k+1}\right)}\right\}, 
 \]
 and
\[
\int_{V_k} g_{u_{k,i}}\,d\mu\le P_+(\Om, V_k)+2^{-i-k}.
\]
Fix $i\in\N$. We construct a function $w_i$ inductively as follows. For $k=k_\eps$ we set $w_{i,k}=u_{k,i}$. We apply
Lemma~\ref{lem:paste} with $U_1=V_{k_\eps}$, $U_2=V_{k_\eps+1}$, $u=w_{i,k_\eps}$, and
$v=u_{k+1,i}$ to obtain $w_{i,k_\eps+1}\in\liploc(V_{k_\eps}\cup V_{k_\eps+1})$.
Note that $w_{i,k_\eps+1}$ vanishes in $(V_{k_\eps}\cup V_{k_\eps+1})\setminus\Om$, 
\[
  \int_{V_{k_\eps}\cup V_{k_\eps+1}}|w_{i,k_\eps+1}-\ch_\Om|\, d\mu\le 2^{-i}(2^{-k_\eps}+2^{-k_\eps-1}),
\]
and 
\[
\int_{V_{k_\eps}\cup V_{k_\eps+1}} g_{w_{i,k_\eps+1}}\,d\mu\le P_+(\Om,V_{k_\eps})+P_+(\Om, V_{k_\eps+1})
  +2^{-i+1}(2^{-k_\eps}+2^{-k_\eps-1}).
\]
Now we inductively apply Lemma~\ref{lem:paste} with $U_1=\bigcup_{k=k_\eps}^{\ell-1} V_k$
and $U_2=V_{\ell}$, and $u=w_{i,\ell-1}$ and $v=u_{\ell,i}$, with $\ell>k_{\eps}+1$,
to obtain a
sequence of functions $(w_{i,\ell})_\ell$ with $w_{i,\ell}\in \liploc\left(\bigcup_{k=k_\eps}^{\ell} V_k\right)$ such that $w_{i,\ell}$ vanishes in
$\bigcup_{k=k_\eps}^{\ell}V_k\setminus\Om$,
\[
\int_{\bigcup_{k=k_\eps}^{\ell}V_k}|w_{i,\ell}-\ch_\Om|\, d\mu<2^{-i}\sum_{k=k_\eps}^\infty 2^{-k},
\]
and 
\[
\int_{\bigcup_{k=k_\eps}^{\ell}V_k} g_{w_{i,\ell}}\,d\mu \le \sum_{k=k_\eps}^\infty P_+(\Om,V_k)+2^{-i+1}\sum_{k=k_\eps}^\infty 2^{-k}.
\]
Note in addition that $w_{i,\ell}=w_{i,n+1}$ in $V_n$ for $\ell\ge n+1$. It follows that $w_i:=\lim_{\ell\to\infty}w_{i,\ell}$
exists, belongs to $\liploc\left(\bigcup_{k=k_\eps}^\infty V_k\right)$, and vanishes in
$\bigcup_{k=k_\eps}^\infty V_k\setminus\Om$. Moreover,
\[
\int_{\bigcup_{k=k_\eps}^\infty V_k}|w_i-\ch_\Om|\, d\mu\le 2^{-(i+k_\eps-1)},
\]
and 
\[
\int_{\bigcup_{k=k_\eps}^{\infty}V_k} g_{w_{i}}\,d\mu \le \sum_{k=k_\eps}^\infty P_+(\Om,V_k)+2^{-(i+k_\eps-2)}.
\]
From the first of the above two inequalities it follows that $w_i-\ch_\Om\to 0$ in 
$L^1\left(\bigcup_{k=k_\eps}^\infty V_k\right)$, and so by the second of the above two inequalities,
\[
 P_+\left(\Om, \bigcup_{k=k_\eps}^\infty V_k\right)
 \le\liminf_{i\to\infty}\int_{\bigcup_{k=k_\eps}^{\infty}V_k} g_{w_{i}}\,d\mu
 \le \sum_{k=k_\eps}^\infty P_+(\Om,V_k)<\eps
\]
by~\eqref{eq:tail-end}, as desired. Moreover, $\bigcup_{k=k_\eps}^\infty V_k=W\setminus \overline{W}_{\delta}$
for $\delta:=\delta_{2k_\eps-3}$.
\end{proof}

By combining Lemma~\ref{lem:thin annulus has small measure} and Lemma~\ref{lem:pasting with annulus}, we obtain property (3) of Theorem~\ref{thm:DeGiorgiLetta}, that is,
for any open set $U\subset X$,
\begin{equation}\label{eq:inner regularity}
P_+(\Omega,U)=\sup\{P_+(\Omega,V),\,V\textrm{ open, } V\Subset U\}.
\end{equation}

Finally, we prove property (1) of Theorem~\ref{thm:DeGiorgiLetta}.
\begin{lemma}
Let $U_1,U_2\subset X$ be open sets. Then
\[
P_+(\Omega,U_1\cup U_2)\le P_+(\Omega,U_1)+P_+(\Omega,U_2).
\]
\end{lemma}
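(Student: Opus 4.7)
The plan is to reduce to the precompact case using the inner regularity property~\eqref{eq:inner regularity} and then to use the pasting Lemma~\ref{lem:paste2} to glue together approximating functions defined separately on $U_1$ and on $U_2$. The essential point is that Lemma~\ref{lem:paste2} requires a \emph{precompact} inclusion $V_i'\Subset U_i$, which is why inner regularity is indispensable here; the pasting constant $C(V_1',U_1)$ is then independent of the approximating sequences, so we can pass to the limit cleanly.

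More precisely, I would fix $\varepsilon>0$ and, by~\eqref{eq:inner regularity}, pick an open set $V\Subset U_1\cup U_2$ with $P_+(\Omega,V)\ge P_+(\Omega,U_1\cup U_2)-\varepsilon$. Since $\overline{V}$ is compact, each of its points admits an open ball compactly contained in either $U_1$ or $U_2$; taking a finite subcover yields open sets $V_1'\Subset U_1$ and $V_2'\Subset U_2$ with $V\subset V_1'\cup V_2'$. Next, by the definition of $P_+$, I would pick sequences $(u_i)\subset\liploc(U_1)$ and $(v_i)\subset\liploc(U_2)$, each vanishing outside $\Omega$ in the respective set, with $u_i-\ch_\Omega\to 0$ in $L^1(U_1)$ and $v_i-\ch_\Omega\to 0$ in $L^1(U_2)$, and such that
\[
\lim_{i\to\infty}\int_{U_1}g_{u_i}\,d\mu=P_+(\Omega,U_1),\qquad
\lim_{i\to\infty}\int_{U_2}g_{v_i}\,d\mu=P_+(\Omega,U_2).
\]

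Applying Lemma~\ref{lem:paste2} with the data $V_1'\Subset U_1$, $V_2'\Subset U_2$, $u=u_i$, $v=v_i$, I obtain a cutoff $\eta\in\Lip_c(U_1)$ (independent of $i$) such that $w_i:=\eta u_i+(1-\eta)v_i\in\Lip(V_1'\cup V_2')$ and
\[
\int_{V_1'\cup V_2'}g_{w_i}\,d\mu
\le \int_{U_1}g_{u_i}\,d\mu+\int_{U_2}g_{v_i}\,d\mu
+C(V_1',U_1)\int_{U_1\cap U_2}|u_i-v_i|\,d\mu.
\]
Since $\eta=1$ on $V_1'$ and $\eta=0$ on $V_2'\setminus U_1$, and since both $u_i$ and $v_i$ vanish outside $\Omega$ in their respective sets, $w_i$ vanishes on $(V_1'\cup V_2')\setminus\Omega$; by the $L^1$ convergence of $u_i,v_i$ to $\ch_\Omega$, $w_i-\ch_\Omega\to 0$ in $L^1(V_1'\cup V_2')$. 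Moreover,
\[
\int_{U_1\cap U_2}|u_i-v_i|\,d\mu
\le \int_{U_1}|u_i-\ch_\Omega|\,d\mu+\int_{U_2}|v_i-\ch_\Omega|\,d\mu\longrightarrow 0.
\]
Hence $(w_i)$ is an admissible competing sequence for $P_+(\Omega,V_1'\cup V_2')$, and
\[
P_+(\Omega,V)\le P_+(\Omega,V_1'\cup V_2')
\le \liminf_{i\to\infty}\int_{V_1'\cup V_2'}g_{w_i}\,d\mu
\le P_+(\Omega,U_1)+P_+(\Omega,U_2).
\]
Combining with the choice of $V$ and letting $\varepsilon\to 0$ gives the claim.

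I expect the main obstacle to be purely bookkeeping: making sure the pasted function $w_i$ still vanishes off $\Omega$ inside $V_1'\cup V_2'$ (which uses that both $u_i$ and $v_i$ individually vanish off $\Omega$ and that $\eta$ takes values in $[0,1]$), and handling the transition region $U_1\cap U_2$ where neither $u_i$ nor $v_i$ alone suffices. The error term involving $\int_{U_1\cap U_2}|u_i-v_i|\,d\mu$ is harmless precisely because $u_i$ and $v_i$ converge to the \emph{same} limit $\ch_\Omega$ in $L^1$ on the overlap, so the cutoff constant $C(V_1',U_1)$ does no damage in the limit.
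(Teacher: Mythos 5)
Your argument is correct and follows essentially the same route as the paper's own proof: reduce to a precompact $V\Subset U_1\cup U_2$ via inner regularity~\eqref{eq:inner regularity}, decompose $V$ into pieces compactly contained in $U_1$ and $U_2$, paste near-optimal approximating sequences with Lemma~\ref{lem:paste2}, and observe that the transition term vanishes because $u_i$ and $v_i$ share the limit $\ch_\Omega$ on $U_1\cap U_2$. The only cosmetic difference is that you spell out the finite-subcover step to obtain $V_1'\Subset U_1$, $V_2'\Subset U_2$ covering $V$, which the paper leaves implicit by asserting $V=U_1'\cup U_2'$ directly.
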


\begin{proof}
Take $V\Subset U_1\cup U_2$ and note that $V=U_1'\cup U_2'$ for some $U_1'\Subset U_1$
and $U_2'\Subset U_2$. We can find sequences $(u_i)\subset \liploc(U_1)$ and $(v_i)\subset \liploc(U_2)$ such that
$u_i$ vanishes in $U_1\setminus\Om$, $v_i$ vanishes in
$U_2\setminus\Om$,
$u_i-\ch_\Om\to 0$ in $L^1(U_1)$, $v_i-\ch_\Om\to 0$ in $L^1(U_2)$, and
\[
\lim_{i\to\infty}\int_{U_1}g_{u_i}\,d\mu=P_+(\Om,U_1),\qquad \
\lim_{i\to\infty}\int_{U_2}g_{v_i}\,d\mu= P_+(\Om,U_2).
\]
By Lemma~\ref{lem:paste2}, we then find functions $w_i\in \liploc(U_1'\cup U_2')$ satisfying
$w_i\to\ch_{\Omega}$ in $L^1(U_1'\cup U_2')$ and
\[
\int_{U_1'\cup U_2'}g_{w_i}\,d\mu\le \int_{U_1} g_{u_i}\,d\mu+\int_{U_2}g_{v_i}\,d\mu+C(U_1',U_1)\int_{U_1\cap U_2} |u_i-v_i|\,d\mu.
\]
Note also that by the construction of $w_i$ in Lemma~\ref{lem:paste2}, $w_i$
vanishes in $U_1'\cup U_2'\setminus\Om$.
Letting $i\to\infty$, we obtain
\[
P_+(\Omega,U_1'\cup U_2')\le \liminf_{i\to\infty}\int_{U_1'\cup U_2'}g_{w_i}\,d\mu=
P_+(\Om,U_1)+P_+(\Om,U_2).
\]
By~\eqref{eq:inner regularity}, we obtain the desired conclusion.
\end{proof}

Thus we have proved that $P_+(\Om,\cdot)$ satisfies the conditions of Theorem~\ref{thm:DeGiorgiLetta}, so that $P_+(\Omega,\cdot)$ is a Borel outer measure. Borel regularity follows easily from the definition
\[
P_+(\Omega,A):=\inf\{P_+(\Omega,U):\, U\textrm{ open, }U\supset A\},\quad A\subset X.
\]
In conclusion, $P_+(\Omega,\cdot)$ is a Radon measure.

\noindent Address:\\

\noindent R.K.: Department of Mathematics and Systems Analysis, Aalto University,
P.O. Box 11100, FI-00076 Aalto, Finland. \\
\noindent E-mail: {\tt riikka.korte@aalto.fi}

\vskip .5cm

\noindent P.L.: Department of Mathematical Sciences, P.O. Box 210025, University of
Cincinnati, Cincinnati, OH 45221--0025, U.S.A. \\
\noindent E-mail: {\tt lahtipk@ucmail.uc.edu}

\vskip .5cm

\noindent  X.L.: Department of Mathematics, Sun Yat-sen University, Guangzhou 510275, China.\\
\noindent E-mail: {\tt lixining3@mail.sysu.edu.cn}

\vskip .5cm

\noindent N.S.: Department of Mathematical Sciences, P.O. Box 210025, University of
Cincinnati, Cincinnati, OH 45221--0025, U.S.A. \\
\noindent E-mail: {\tt shanmun@uc.edu} 

\end{document}